\numberwithin{equation}{section}
\newtheorem{thm}[equation]{Theorem}
\newtheorem{lem}[equation]{Lemma}
\newtheorem{prp}[equation]{Proposition}
\theoremstyle{definition}
\newtheorem{df}[equation]{Definition}
\newtheorem{cor}[equation]{Corollary}
\theoremstyle{remark}
\newtheorem*{rem}{Remark}
\DeclareMathOperator{\map}{map}
\DeclareMathOperator{\Aut}{Aut}
\DeclareMathOperator{\Rep}{Rep}
\DeclareMathOperator{\IR}{IrrRep}
\DeclareMathOperator{\res}{res}
\DeclareMathOperator{\Ob}{Ob}
\DeclareMathOperator{\hocolim}{hocolim}
\DeclareMathOperator{\Fib}{Fib}
\DeclareMathOperator{\GL}{GL}
\DeclareMathOperator{\Sympl}{Sp}
\DeclareMathOperator{\Hom}{Hom}
\DeclareMathOperator{\Mor}{Mor}
\DeclareMathOperator{\Inn}{Inn}
\DeclareMathOperator{\Out}{Out}
\DeclareMathOperator{\Tor}{Tor}
\DeclareMathOperator{\tr}{tr}
 \def\Ab{\mathbf{Ab}}
 \def\Sp{\mathbf{Sp}}
 \def\HSp{\mathbf{HSp}}
\def\C{\mathcal{C}}
\def\Z{\mathbb{Z}}
\def\F{\mathbb{F}}
\def\sP{\mathcal{P}} 
\def\cR{\mathcal{R}}
\def\cC{\mathcal{C}}
\def\cP{\mathcal{P}}
\def\fG{\mathfrak{G}}
\def\fI{\mathfrak{I}}
 \def\fF{\mathfrak{F}}
 \def\Ad[#1]#2{\mathrm{Ad}^{#1}_{#2}}
 \def\hoc[#1]{\hocolim_{\C}^{(#1)}F}
\def\dg@twoarrowedvector(#1,#2)#3{%
   \begingroup
   \dg@XTEMP=#1\relax\multiply\dg@XTEMP\m@ne\relax
   \dg@YTEMP=#2\relax\multiply\dg@YTEMP\m@ne\relax
   \dg@ZTEMP=#1\relax
   \ifnum\dg@ZTEMP<\z@
     \multiply\dg@ZTEMP\m@ne\relax \fi
   \ifnum\dg@YTEMP<\z@
     \advance\dg@ZTEMP by -\dg@YTEMP
   \else \advance\dg@ZTEMP by \dg@YTEMP \fi
   \dg@XSHIFT=#2\relax\multiply\dg@XSHIFT\m@ne\relax\multiply\dg@XSHIFT\twoarrowsep\relax
     \divide\dg@XSHIFT by \dg@ZTEMP\relax
   \dg@YSHIFT=#1\relax\multiply\dg@YSHIFT\twoarrowsep\relax\divide\dg@YSHIFT by \dg@ZTEMP\relax
   \begin{picture}(0,0)%
      \thinlines
      \put(-\dg@XSHIFT,-\dg@YSHIFT){\vector(#1,#2){#3}}%
      \put(\dg@XSHIFT,\dg@YSHIFT){\vector(#1,#2){#3}}
   \end{picture}%
   \endgroup}%
\title{Homotopy representations of the unitary groups}
\author[W. Lubawski]{Wojciech Lubawski}
\address{Theoretical Computer Science Department\\
Jagiellonian University\\ Go{\l}{e}bia 24\\ 00-300 Krak\'ow, Poland}
\address{Institute of Mathematics\\ Polish Academy of Sciences\\
\'Sniadeckich 8 \\ 00-956 Warszawa, Poland}
\email{w.lubawski@gmail.com}
\author[K. Ziemia\'nski]{Krzysztof Ziemia\'nski}
\address{Faculty of Mathematics, Informatics and Mechanics \\
University of Warsaw\\ 
Banacha  2 \\ 02-097 Warszawa, Poland}
\email{ziemians@mimuw.edu.pl}
\begin{document}

\begin{abstract}
	Let $G$ be a compact connected Lie group and let $\xi,\nu$ be complex vector bundles over the classifying space $BG$. The problem we consider is whether $\xi$ contains a subbundle which is isomorphic to $\nu$. The necessary condition is that for every prime $p$ the restriction $\xi|_{BN_p^G}$, where $N_p^G$ is a maximal $p$-toral subgroup of $G$, contains a subbundle isomorphic to $\nu|_{BN_p^G}$. We provide a criterion when this condition is sufficient, expressed in terms of $\Lambda^*$-functors of Jackowski, McClure \& Oliver and we prove that this criterion applies if $\nu$ is a universal bundle over $BU(n)$. Our result allows to construct new examples of maps between classifying spaces of unitary groups. While proving the main result, we develop the obstruction theory for lifting maps from homotopy colimits along fibrations, which generalizes the result of Wojtkowiak.
\end{abstract}

\maketitle
\section{Introduction}

Let us introduce the following property of complex vector bundles:

\begin{df}
	A vector bundle $\nu$ over a space $X$ has \emph{the splitting property with respect to a map $f:A\to X$} if the following holds for every bundle $\xi$ over $X$. If there exists a bundle $\xi'_A$ such that $f^*\xi\simeq f^*\nu\oplus \xi'_A$, then there exists a vector bundle $\xi'$ such that $\xi\simeq \nu\oplus\xi'$.
\end{df}

The subject of this paper is the splitting property of vector bundles in the special case. Let $G$ be a compact connected Lie group. For every prime $p$ there exists a unique up to conjugation maximal $p$-toral subgroup $N_p^G\subseteq G$. Its identity component is a maximal torus of $G$, and the group of its components is a $p$-Sylow subgroup of the Weyl group of $G$.

\begin{df}
	A vector bundle $\nu$ over $BG$ has \emph{the splitting property} if it has the splitting property with respect to a map
	\[
		\coprod_{\text{$p$ prime}} BN_p^G \to BG
	\]
	induced by the inclusions $N_p(G)\subseteq G$.
\end{df}

The splitting property in this particular case is especially useful due to the theorem of Notbohm \cite{N} which implies that isomorphism classes of complex vector bundles over $BN_p^G$ are in 1-1 correspondence with isomorphism classes of unitary representations of $N_p^G$. Jackowski \& Oliver \cite{O} proved that the trivial bundle over $BG$ has the splitting property. In the present paper we extend the result to the universal bundle $\gamma^n$ over the classifying space of the unitary group $BU(n)$ proving the following theorem:

\begin{thm}\label{t:MainBundle}
	The universal bundle $\gamma^n$ over the classifying space of the unitary group $BU(n)$ has the splitting property.
\end{thm}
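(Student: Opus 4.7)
My plan is to translate the splitting property into a lifting problem against a direct-sum fibration, and then run the homotopy-colimit obstruction theory that the paper promises to develop.

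First, I would reformulate. Writing $m=\rk\xi$, the bundle $\xi$ is classified by a map $\xi\colon BU(n)\to BU(m)$. A splitting $\xi\simeq\gamma^n\oplus\xi'$ is equivalent to lifting $\xi$ through the block-sum map $\oplus_\ast\colon BU(n)\times BU(m-n)\to BU(m)$ in such a way that the first coordinate of the lift is homotopic to $\id_{BU(n)}$. Converting $\oplus_\ast$ to a fibration and pulling back along $\xi$, a splitting becomes a section $s\colon BU(n)\to E$ of a fibration $E\to BU(n)$ whose fibre is the complex Grassmannian $U(m)/(U(n)\times U(m-n))$, subject to the constraint on the first coordinate. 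The hypothesis of the theorem supplies such sections $s_p$ over each $BN_p^{U(n)}$.

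Next, I would decompose $BU(n)$, one prime at a time, as a homotopy colimit $\hocolim_{\C_p}B(-)$ over a Jackowski--McClure--Oliver category $\C_p$ of $p$-toral subgroups of $U(n)$, in which $N_p^{U(n)}$ sits at a distinguished position. The local sections $s_p$ provide the initial datum there, and the obstruction theory for lifting a map out of a homotopy colimit along a fibration (the Wojtkowiak-style theory the paper develops) produces an obstruction sequence to extending this datum over the whole diagram. These obstructions live in the higher-limit groups
\[
\Lambda^{i+1}\!\bigl(U(n);\,\underline{\pi_i}\bigl(U(m)/(U(n)\times U(m-n))\bigr)\bigr)
\]
over $\C_p$, with coefficients in the homotopy groups of the Grassmannian fibre viewed as a functor on $p$-toral subgroups of $U(n)$ via the natural action.

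The crux of the argument, and the part I expect to be the main obstacle, is the vanishing of these $\Lambda^{\ast}$-groups. One identifies the coefficient functors by decomposing $\pi_i(U(m)/(U(n)\times U(m-n)))$ as a representation of the relevant Weyl-type groups; in the relevant range these homotopy groups split into pieces of the representation-theoretic type for which Jackowski, McClure and Oliver proved $\Lambda^{\ast}=0$ in positive degree. Once the obstructions vanish, the local sections $s_p$ glue to a global section, which after reassembly across primes (via a standard arithmetic-square argument) yields the desired global splitting $\xi\simeq\gamma^n\oplus\xi'$.

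One subtle additional point is that a global section of $E\to BU(n)$ is not automatically one whose first coordinate is homotopic to $\id_{BU(n)}$; this constraint has to be encoded either by restricting from the outset to the sub-fibration cutting out first-coordinate identity (so that the fibre becomes the appropriate homogeneous space with a chosen basepoint) or by introducing an extra layer of obstructions in the same $\Lambda^{\ast}$-framework. Either way the new coefficient systems are of the same representation-theoretic type, so JMO-vanishing again applies, and the argument closes.
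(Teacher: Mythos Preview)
Your overall architecture matches the paper's: translate to a lifting problem against the block-sum fibration, decompose $BU(n)$ one prime at a time as a homotopy colimit over the $p$-stubborn orbit category $\mathcal{R}_p(U(n))$, and run an obstruction theory whose obstructions land in higher limits computable via $\Lambda^*$-functors. Where your sketch goes wrong is in the identification of the coefficient functor and in the vanishing step, and both gaps are substantive.

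First, the fibre relevant to the obstruction theory is \emph{not} the single Grassmannian $U(m)/(U(n)\times U(m-n))$ with some action. The obstruction groups are $H^{i+1}(\mathcal{R}_p(U(n));\pi_i\Fib(-))$, where $\Fib(G/P)$ is the fibre of
\[
\map(BP,(BU(n)\times BU(m-n))_p^\wedge)_{B(\mu_P\times\nu_P)} \longrightarrow \map(BP,BU(m)_p^\wedge)_{B(\mu_P\oplus\nu_P)}.
\]
By Dwyer--Zabrodsky--Notbohm and Schur's lemma this is
\[
\Fib(G/P)\ \simeq\ \prod_{\varrho\in\IR(P)} \bigl(U(c_{\mu_P}^\varrho+c_{\nu_P}^\varrho)/(U(c_{\mu_P}^\varrho)\times U(c_{\nu_P}^\varrho))\bigr)_p^\wedge,
\]
a product of \emph{smaller} Grassmannians indexed by irreducibles of $P$. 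The paper spends two full sections (its ``homotopy actions'' and ``fibres'' sections) proving that the $N_{U(n)}(P)/P$-action on $\Fib(G/P)$ is, up to homotopy, the obvious permutation of factors induced by the action on $\IR(P)$; this is not automatic and is the technical heart of the argument. Your sentence about ``decomposing $\pi_i(U(m)/(U(n)\times U(m-n)))$ as a representation of the relevant Weyl-type groups'' misses this entirely.

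Second, the vanishing of the resulting $\Lambda^*$-groups is \emph{not} a citation of JMO. After the identification above, the coefficient module decomposes over $N_{U(n)}(P)/P$-orbits $X\subseteq\IR(P)$ as $\pi_i(\text{Grassmannian})\otimes\Z_p^\wedge[X]$, and one is reduced to showing $\Lambda^i(N_{U(n)}(P)/P;\Z_p^\wedge[X])=0$ for $i\ge 3$. The paper proves two new results to achieve this: (a) for the identity (more generally Adams) character, the map $N_{U(n)}(P)/P\to\Sigma_X$ is \emph{surjective} for each such orbit $X$, which uses Oliver's explicit classification of $p$-stubborn subgroups of $U(n)$ and the irreducibility of the inclusion $P\hookrightarrow U(p^k)$; and (b) $\Lambda^i(\Sigma_X;\Z_p^\wedge[X])=0$ for $i\ge 2$, which is a genuinely new computation using radical $p$-chains and a lemma of Aschbacher--Kessar--Oliver. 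Point (a) is precisely where the hypothesis $\nu=\iota$ enters; for a general bundle $\nu$ the orbit action need not surject onto $\Sigma_X$, and the argument would fail. Your proposal does not isolate this, and ``JMO-vanishing again applies'' is not a proof.

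Finally, your worry about forcing the first coordinate of the lift to be $\id_{BU(n)}$ is a non-issue in the paper's setup: one lifts the fixed maps $g_P=B(\mu_P\times\nu_P)$ on objects, so the first coordinate is $B\mu_P$ from the start, and the character of the assembled lift is then read off on the maximal torus.
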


Vector bundles over the classifying space can be interpreted and maps $BG \to BU(d)$ which will be called \emph{homotopy representations}. The natural source of homotopy representations are linear representations of $G$ i.e. homomorphisms $G \to  U(n)$. The vector bundles corresponding to maps induced by linear representations are the bundles associated with the universal $G$-bundle $EG \to BG$. Thus the above two properties of vector bundles can be easily formulated in terms of homotopy
theory. Since proofs will be given in those terms let us state them explicitly.

For a compact connected Lie group $G$ we will formulate the splitting property in terms of virtual characters. To every homotopy representation $f : BG \to BU(d)$ we associate a virtual character $\chi_f\in R(G)$. Let $T\subseteq G$ be a maximal torus of $G$. By the Notbohm theorem \cite{N} restriction $f|_{BT}: BT \to BU(d)$ is defined by a unique up to isomorphism linear representation $\varrho_f : T \to U(d)$ and its character $\chi_{\varrho_f}$ is invariant under the Weyl group action on the representation ring $R(T)$. Thus via the classical isomorphism $R(G) \simeq R(T)^W$ we can consider it as a virtual character $\chi_f\in R(G)$.
The virtual characters coming from homotopy representations we will call \emph{homotopy characters}. A natural question arises: when a virtual character $\alpha\in R(G)$ is a homotopy character? The Dwyer-Zabrodsky-Notbohm theorem provides the following constraint: for every prime $p$ and every $p$-toral subgroup $P\subseteq G$ the restriction of $\alpha$ to $P$ is the character of a linear representation. Such characters of the group $G$ will be called $\cP$-characters of $G$. Since $N_p$ is a maximal $p$-toral subgroup of $G$, a virtual character $\mu\in R(G)$ is a $\cP$-character if for every prime the restriction of $\mu$ to $N_p$ is a character of linear representation. Now we can formulate the splitting property for homotopy representations of the compact, connected Lie groups.

\begin{df}
	A homotopy character $\mu\in R(G)$ of a compact connected Lie group $G$ has \emph{the splitting property} if every $\mathcal{P}$-character $\nu$ of $G$ such that $\mu+\nu$ is a homotopy character is also a homotopy character.
\end{df}

Hence, Theorem \ref{t:MainBundle} can be reformulated as

\begin{thm}\label{t:Main}
	For every natural number $n$ the character $\iota$ of the identity representation $BU(n)\rightarrow BU(n)$ has the splitting property.
\end{thm}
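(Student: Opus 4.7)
The plan is to convert Theorem~\ref{t:Main} into a lifting problem. If $\nu$ is a $\mathcal{P}$-character of virtual dimension $d$ such that $\iota+\nu$ is a homotopy character, then $\iota+\nu$ is represented by some $\phi\colon BU(n)\to BU(n+d)$ classifying a bundle $\xi$, and one must produce $\psi\colon BU(n)\to BU(d)$ with $\gamma^n\oplus\psi^*\gamma^d\simeq\xi$. Equivalently, one must lift $\phi$ through the Whitney-sum fibration
\[
BU(n)\times BU(d)\longrightarrow BU(n+d),
\]
whose fibre is the Grassmannian $U(n+d)/(U(n)\times U(d))$, subject to the constraint that the first coordinate of the lift is $\id_{BU(n)}$.

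I would work one prime at a time, replacing $BU(n)$ by its Bousfield-Kan $p$-completion and invoking the Jackowski-McClure-Oliver decomposition $BU(n)^{\wedge}_p\simeq(\hocolim_{\cC_p}BH)^{\wedge}_p$ over a suitable category $\cC_p$ of $p$-stubborn subgroups and their centralizers. By Notbohm's theorem combined with the $\mathcal{P}$-character hypothesis, $\nu|_{BN_p^{U(n)}}$ is realized by a genuine unitary representation $N_p^{U(n)}\to U(d)$; taken with the tautological inclusion, this yields a compatible partial lift of $\phi$ over every vertex of $\cC_p$, and the splitting property reduces to the assembly of these partial lifts into a global one on the homotopy colimit.

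The assembly step is controlled by the obstruction theory developed earlier in the paper, which extends Wojtkowiak's theorem from extensions to lifts along fibrations. The resulting obstructions lie in higher limits $\lim^{i+1}_{\cC_p}\pi_i(F)$ of the homotopy-group functors of the Grassmannian fibre, and the paper's main criterion re-expresses their vanishing in terms of the $\Lambda^*$-functors of Jackowski-McClure-Oliver applied to coefficient systems built from centralizer representation rings. Once the $p$-complete lifts exist for every $p$, they are glued into an integral lift by an arithmetic-square argument that is routine because $BU(n)$ has finitely generated mod-$p$ cohomology and its rationalization is a product of Eilenberg-MacLane spaces on which the splitting is forced by characters.

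The principal obstacle is the $\Lambda^*$-vanishing in the previous step. Identifying the precise coefficient modules demands working through the Postnikov tower of the Grassmannian fibre so as to read off the $W$-action on $\pi_i(F)$ in terms of the identity character and of the given $\nu$. Proving the vanishing then exploits the special structure of $\iota$: its restriction to the centralizer of every elementary abelian $p$-subgroup decomposes as a sum of characters permuted by the corresponding Weyl group, placing the obstruction modules precisely in the range where the $\Lambda^*$-vanishing results of Jackowski-McClure-Oliver for $U(n)$ apply. Careful bookkeeping between the topological degrees $i+1$ and the algebraic $\Lambda^*$-degrees is where the real technical effort will go.
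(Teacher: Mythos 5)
Your overall route is the same as the paper's: reduce to one prime at a time, lift through the Whitney-sum fibration over the Jackowski--McClure--Oliver $p$-stubborn decomposition, control the lift by the obstruction theory for fibrations, and translate the obstruction groups into $\Lambda^*$-functors; this is exactly the content of Theorem \ref{t:MainCrit}. The genuine gap is in the step you describe as bookkeeping plus an appeal to known $\Lambda^*$-vanishing. What has to be verified for $G=U(n)$ and $\nu=\iota$ is $\Lambda^{i}\bigl(N_{U(n)}(P)/P;\Z_p^\wedge[X]\bigr)=0$ for $i\ge 3$, for every $p$-stubborn $P\subseteq U(n)$ and every orbit $X\subseteq \IR(P,\mu_P)\cap\IR(P,\iota_P)$ --- note that the relevant subgroups are the $p$-stubborn ones indexing the decomposition, not elementary abelian subgroups and their centralizers, which belong to a different (centralizer) decomposition that you conflate with the subgroup one. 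More importantly, this vanishing is not ``in the range where the $\Lambda^*$-results of JMO for $U(n)$ apply'': it is new, and it is where the real work of the paper lies. The paper proves it in two steps: first, using Oliver's classification of $p$-stubborn subgroups of $U(n)$ (\ref{p:StubbornSubgroupOfUn}) and the irreducibility of $\iota_P$ (more generally $\psi^k_P$) on elementary $p$-stubborn factors (\ref{p:IrrIsIrr}, \ref{p:IdentityIsIrreducible}), it shows that the action map $N_{U(n)}(P)/P\to\Sigma_X$ is surjective on each orbit $X\subseteq\IR(P,\iota_P)$ (\ref{p:EpiSigma}); second, it proves $\Lambda^{i}(\Sigma_X;\Z_p^\wedge[X])=0$ for $i\ge 2$ (\ref{p:LambdaSigma}) via the Aschbacher--Kessar--Oliver radical $p$-chain criterion (\ref{l:AKO}) and Alperin--Fong's description of $p$-radical subgroups of symmetric groups, with a separate check of the exceptional chain at $p=2$, $n=5$. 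Without these two inputs (or substitutes for them) your argument does not close, and they are not obtainable from the JMO computations you cite.

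Two secondary points. Identifying $\pi_i$ of the fibre as an $N_{U(n)}(P)/P$-module is not a Postnikov-tower exercise: the delicate issue is that the equivalence between the fibre of the mapping-space fibration and the product of $p$-completed Grassmannians must be shown to be homotopy equivariant for the Weyl-group action; this is Theorem \ref{t:hNEq} and occupies Sections \ref{s:HomotopyActions} and \ref{s:Fibers} (the equivariance of the Dwyer--Zabrodsky--Notbohm adjoint, the Schur-decomposition isomorphism $k_\alpha$, and the fibre-transport argument \ref{p:hNActCrit} together with the centralizer computation \ref{p:CentrOfJJ}). Also, insisting that the first coordinate of the lift be $\id_{BU(n)}$ is a stronger constraint than the character statement \ref{t:Main} requires and is not what the obstruction theory of Section \ref{s:Obstr} produces; as in \ref{p:Sketch} it suffices that the two coordinates of the lift restrict to $B\mu_T$ and $B\iota_T$ on the maximal torus, after which one projects onto each factor. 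The prime-by-prime assembly you propose via an arithmetic square is handled in the paper by \ref{p:PrimeHChar}.
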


Homotopy characters which have splitting property can be used to construct interesting new maps between classifying spaces. Description of homotopy classes of maps between classifying spaces of compact Lie groups is a classical topic of homotopy theory. The first examples of self maps not induced by homomorphisms were constructed by Sullivan \cite{Su} for classifying spaces $BU(n)$. For every integer $k$ prime to $n!$ there exists a map, known as the unstable Adams operation, $\Psi^k : BU(n)\to BU(n)$ such that restricted to the diagonal matrices is induced by $k$-power homomorphism. As was shown over twenty years later in \cite{JMO}, this condition characterizes such maps uniquely up to homotopy.
The general question how to describe the set of homotopy classes of maps $[BG,BH]$ in terms of groups $G$ and $H$ attracted attention of many researchers through decades, but still remains open. Our Theorem \ref{t:MainBundle} and \cite[Proposition 1.13]{JO} lead to construction of maps $BU(n) \to BU(d)$  which can not be produced by compositions of sums and tensor products of the unstable Adams operations and maps induced by homomorphisms. The crucial observation is that characters of some irreducible representations of $U(n)$ can be written as a nontrivial sum of $\cP$-characters $\mu+\nu$, where $\nu$ is the character of either the trivial or the identity representation. Then $\mu$ is a homotopy character. This construction applies to the classification of low-dimensional homotopy representations \cite{LZ}.

\subsection*{Criterion for splitting $\mathcal{P}$-characters}

Theorem \ref{t:Main} is a consequence of a more general criterion for splitting of $\sP$-characters. Before formulating it we need to introduce some definitions. Recall after \cite{JMO} that a $p$-toral group $P\subseteq G$ is \emph{$p$-stubborn} if $N_G(P)/P$ is a finite group and contains no non-trivial normal $p$-subgroups. Let $\Z_p^\wedge$ be the ring of $p$-adic integers, and for a finite group $\Gamma$ and a $\Z_p^\wedge[\Gamma]$-module $M$  let $\Lambda^i(\Gamma;M)$ be $\Lambda^*$-functors introduced in \cite[5.3]{JMO}  (see also \ref{e:DefLambda}).

For a compact Lie group $H$ let $\IR(H)$ be the set of isomorphisms classes of irreducible complex representations of $H$. For a representation $\alpha$ of $H$ and $\varrho\in\IR(H)$ let $c_\alpha^\varrho$ be the number of summands isomorphic to $\varrho$ in a decomposition of $\alpha$ into a sum of irreducible subrepresentations. Futhermore, let $\IR(P,\alpha)=\{\varrho\in\IR(P):\; c_\alpha^\varrho>0\}$. Any (left) action $N\to \Out(H)$ of a finite group $N$ on $H$ by outer automorphisms induces a right action on $\IR(H)$; furthermore, if $\alpha$ is $N$-invariant, then $\IR(P,\alpha)$ is a sub-$N$-set of $\IR(P)$.

For a $\mathcal{P}$-character $\xi$ of $G$ and a $p$-stubborn subgroup $P\subseteq G$ let $\xi_P$ be a representation having character $\res^{G}_P\xi$. Now we are ready to formulate the following:

\begin{thm}\label{t:MainCrit}
	Let $G$ be a compact connected Lie group and let $\mu$, $\nu$ be $\sP$-characters of $G$ such that $\mu+\nu$ is a homotopy character. Assume that for every prime $p$, every $p$-stubborn subgroup $P\subseteq G$ and every $N_G(P)/P$-orbit $X\subseteq \IR(P,\mu_P)\cap\IR(P,\nu_P)$ and $i\geq 3$ holds 
	\[
		\Lambda^{i}(N_G(P)/P; \Z_p^\wedge[X])=0.
	\]
	Then both $\mu$ and $\nu$ are homotopy characters.
\end{thm}

Theorem \ref{t:MainCrit} allows us to prove even stronger result than Theorem \ref{t:Main}, namely the splitting property for characters of unstable Adams operations (\ref{t:Adams}).

\subsection*{Lifting maps from homotopy colimits}
Let $\mathcal{C}$ be a small category, $F:\mathcal{C}\rightarrow \Sp$ a diagram of spaces and $X$ a space. Consider a collection of maps $\{f_c:F(c)\rightarrow X\}_{c\in\Ob(\mathcal{C})}$ which is homotopy compatible, i.e.\ such that for every morphism $c\buildrel{\alpha}\over \rightarrow c'$ maps $f_{c'}\circ F(\alpha)$ and $f_c$ are homotopic. According to results of Wojtkowiak \cite{W}, existence of an extension of $\coprod f_c:\coprod F(c)\rightarrow X$ to a map $f:\hocolim_{\C}F\rightarrow X$  depends on vanishing of obstructions lying in groups
\[
	H^{i+1}(\mathcal{C};\pi_i(\map(F(-),X)_{f_{(-)}})).
\]
In this paper we consider a generalization of this problem. Let $p:Y\rightarrow Z$ be a fibration. Fix a map $f:\hocolim_{\C}F\rightarrow Z$ and its partial lifting $\coprod g_c:\coprod F(c)\rightarrow Y$ (cf.\ \ref{e:HLDiag}) such that the collection $\{g_c\}_{c\in\Ob(\mathcal{C})}$ is homotopy compatible. We prove (\ref{t:Obstr}) that, under certain assumptions, this lifting can be extended to $\hocolim_{\C}F$ if groups
\[
	H^{i+1}(\mathcal{C}; \pi_i(\Fib(-)))
\]
vanish for $i>0$, where $\Fib:\mathcal{C}^{op}\to\HSp$ is a certain functor such that $\Fib(c)$ is a fiber of the fibration 
\[\map(F(c),Y)_{g_c}\rightarrow \map(F(c),Z)_{f|_{F(c)}}.\]

\subsection*{Acknowledgements}
The authors would like to thank Stefan Jackowski and the anonymous referee for many valuable suggestions and comments.

\section{Preliminaries}


For compact Lie groups $H,L$ let $R(H)$ be the unitary representation ring of $H$, \hbox{$R^+(H)\subseteq R(H)$} the semiring of isomorphism classes of unitary representations of $H$ and let $\Rep(H,L):=\Hom(H,L)/\Inn(L)$ be the set of representations of $H$ in $L$. If $H\subseteq L$, then $C_L(H)$ denotes the centralizer of $H$ in $L$. For a space $X$ we denote by $X_p^\wedge$ the $\F_p$-completion in the sense of Bousfield and Kan \cite{BK}.

\subsection*{Dwyer-Zabrodsky-Notbohm theorem}
An important tool we use in this paper is the following theorem due to Dwyer, Zabrodsky and Notbohm. 
\begin{thm}[\cite{N}]\label{t:DZN}
	Let $P$ be a $p$-toral group and $H$ a compact Lie group.
	\begin{itemize}
	\item{
		The map
		\[
			\Rep(P,H)\ni\varphi\mapsto [B\varphi]\in [BP,BH]
		\]
		is a bijection.
	}	
	\item{For every homomorphism $\alpha:P\to H$ the map
	\[
		ad_\alpha: BC_H(\alpha(P))\rightarrow \map(BP,BH)_{B\alpha},
	\]
	which is adjoint to the map induced by the multiplication homomorphism
	\[
		C_H(\varrho(P))\times P\ni (a,b)\mapsto \varrho(a)b \in  H
	\]
	is a mod $p$-equivalence, i.e.\ it induces an isomorphism in homology with $\Z/p$ as coefficients. In particular, the map
	\[(ad_\alpha)_p^\wedge:BC_H(\alpha(P))_p^\wedge\rightarrow (\map(BP,BH)_{B\alpha})_p^\wedge\]
	is a homotopy equivalence.}
	\end{itemize}
\end{thm}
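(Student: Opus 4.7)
My plan is to prove the theorem in stages, following the historical development. The natural first reduction is from $p$-toral groups to finite $p$-groups: any $p$-toral group $P$ fits into an extension $1\to T\to P\to \pi\to 1$ with $T$ a torus and $\pi$ a finite $p$-group, so $BP$ can be presented as a Borel construction $ET\times_T B\pi$. Both statements for $P$ can then be deduced from the corresponding statements for $\pi$ together with a controllable analysis of $\map(BT,BH)$, which is understood via Sullivan's computation of $[BT,BH]$ in terms of representations of $T$. Within the class of finite $p$-groups, I would argue by induction on $|P|$, with the base case being an elementary abelian $p$-group $V\cong(\Z/p)^n$.

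For the elementary abelian base case, the key input is Lannes' $T$-functor, which computes $H^*(\map(BV,X);\F_p)$ whenever $X$ has finite-type mod-$p$ cohomology. Applied to $X=BH$ together with Miller's theorem (the Sullivan conjecture), $T$ identifies $H^*(\map(BV,BH)_{B\alpha};\F_p)$ with $H^*(BC_H(\alpha(V));\F_p)$, and a direct verification shows this identification is realized by $ad_\alpha$. Hence $ad_\alpha$ is a mod-$p$ equivalence, and counting components on both sides yields the bijection $\Rep(V,H)\simeq[BV,BH]$.

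The inductive step uses a central extension $1\to\Z/p\to P\to P'\to 1$, so that $BP\to BP'$ is a fibration with fiber $B(\Z/p)$. This induces a fibration
\[
\map(BP',BH)\to\map(BP,BH)\to\map(B(\Z/p),BH)
\]
up to suitable reinterpretation, and the inductive hypothesis applied to $P'$ together with the elementary abelian case handles both the source and fiber; gluing via the centralizer identification promotes the equivalence $ad_\alpha$ to $P$. To pass from finite $p$-groups to the $p$-toral case, I would realize a $p$-toral $P$ as the inverse limit of its finite quotients $P/T[p^k]$, where $T[p^k]$ are finite $p$-torsion subgroups of the maximal torus $T\subseteq P$, and apply a Bousfield-Kan type argument (using that each mapping space in question has nilpotent components after $p$-completion) to get both the bijection and the centralizer equivalence in the limit. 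The final $p$-completion statement then follows from the mod-$p$ equivalence because $BC_H(\alpha(P))$ has nilpotent fundamental group.

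The main obstacle is squarely the elementary abelian base case: the calculation of $\map(BV,BH)$ requires the full strength of Lannes' technology (exactness of $T$, compatibility with Steenrod operations, and the identification of fixed points via $T$ applied to $H^*(BH;\F_p)$) together with Miller's theorem to conclude that the relevant mapping spaces are correctly detected by mod-$p$ cohomology. Once that input is in place, both the induction over finite $p$-groups and the limit argument for $p$-toral groups are essentially formal manipulations with fibrations of mapping spaces, and the transition from the mod-$p$ equivalence $ad_\alpha$ to the $p$-completed homotopy equivalence $(ad_\alpha)_p^\wedge$ is standard.
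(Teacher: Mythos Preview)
The paper does not give a proof of this theorem: it is stated with a citation to Notbohm \cite{N} (building on Dwyer--Zabrodsky \cite{DZ}) and used as a black box throughout. So there is no ``paper's own proof'' to compare against; your outline is essentially a sketch of the argument as it appears in the cited literature.

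As a sketch of the literature proof your outline is broadly on target, but two technical points deserve care if you ever flesh it out. First, in the inductive step the sequence you write, $\map(BP',BH)\to\map(BP,BH)\to\map(B(\Z/p),BH)$, is not a fibration in any straightforward sense: a fibre sequence $B(\Z/p)\to BP\to BP'$ does not become a fibration after applying $\map(-,BH)$. The actual induction (as in \cite{DZ}) is organised differently, typically via the principal fibration $BP\to BP'\to K(\Z/p,2)$ and an analysis of the resulting map on mapping spaces, or via a direct $T$-functor computation for the full finite $p$-group. Second, your approximation of a $p$-toral $P$ by $P/T[p^k]$ does not produce finite groups: $T/T[p^k]\cong T$, so $P/T[p^k]$ is again $p$-toral of the same rank. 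What Notbohm uses is the \emph{discrete} approximation: replace the identity torus $T$ by its $p$-torsion subgroup $T(p)=\bigcup_k T[p^k]\cong(\Z/p^\infty)^r$, take $\tilde P\subseteq P$ the preimage of $\pi$ containing $T(p)$, observe $B\tilde P\to BP$ is a mod-$p$ equivalence, and write $\tilde P$ as an increasing union of finite $p$-groups. With those corrections the strategy you describe is the standard one.
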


\subsection*{$\mathcal{P}$-characters and homotopy characters}

Fix a compact connected Lie group $G$. Let $T\subseteq G$ be its maximal torus, and \hbox{$W\subseteq \Aut(T)$} its Weyl group. 

\begin{df}\label{d:HtpChar}
	\emph{A homotopy representation} of $G$ is a map $f:BG\rightarrow BU(d)$. We say that two homotopy representations are isomorphic if they are homotopic as maps. \emph{The character} $\chi(f)\in R(G)\cong R(T)^W$ of a homotopy representation $f$ is the character of the representation $\varrho:T\to U(d)$ such that $B\varrho\sim f|_{BT}$. By \ref{t:DZN}, such a character is determined uniquely.
\end{df}

\begin{df}
	A virtual character $\mu\in R(G)$ is \emph{a $\mathcal{P}$-character} if for every prime $p$ and every $p$-toral subgroup $P\subseteq G$ its restriction to $P$ is a character of a linear representation, i.e.\ $\mu|_{P}\in R^+(P)$. The set of $\mathcal{P}$-characters of $G$ will be denoted by $R_{\mathcal{P}}(G)$. Whenever it would not lead to confusion, we will denote by $\mu_P$ a unitary representation of $P$ having character $\mu|_P$.
\end{df}

\begin{prp}
	The character of a homotopy representation is a $\mathcal{P}$-character.
\end{prp}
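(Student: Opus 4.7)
My plan is to prove $\chi(f)|_P = \chi(\varrho_P)$ in $R(P)$, where $\varrho_P \colon P \to U(d)$ is the representation produced by Theorem \ref{t:DZN} applied to $f|_{BP}$; since $\chi(\varrho_P) \in R^+(P)$, this will give $\chi(f)|_P \in R^+(P)$. Both sides are continuous class functions on $P$, so it suffices to verify the equality pointwise on a dense subset and then invoke continuity.

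For the dense subset I take $D := \{g \in P : \overline{\langle g\rangle} \text{ is $p$-toral}\}$. Density is checked coset by coset with respect to the identity component $T_P$: in each $T_P$-coset of $P$, a topologically generic element $g$ satisfies that $g^{p^k}$ (where $p^k$ is the order of the image of $g$ in the finite $p$-group $P/T_P$) generates a dense subgroup of a subtorus of $T_P$, so $\overline{\langle g\rangle}$ has torus identity component and cyclic $p$-group component group, hence is $p$-toral.

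Fix $g \in D$ and set $A := \overline{\langle g\rangle}$. By the classical torus theorem, pick $h \in G$ with $t := hgh^{-1} \in T$, and set $A' := hAh^{-1} = \overline{\langle t\rangle} \subseteq T \cap hPh^{-1}$, which is again $p$-toral. Applying Theorem \ref{t:DZN} to $A'$ produces $\varrho_{A'} \colon A' \to U(d)$ with $B\varrho_{A'} \sim f|_{BA'}$, and its uniqueness clause forces both $\varrho_T|_{A'} \sim \varrho_{A'}$ and $\varrho_{hPh^{-1}}|_{A'} \sim \varrho_{A'}$ up to conjugation in $U(d)$. Since inner automorphisms of $G$ act trivially on $BG$, Theorem \ref{t:DZN} also yields $\varrho_{hPh^{-1}}(t) \sim \varrho_P(h^{-1}th) = \varrho_P(g)$ in $U(d)$. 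Taking traces,
\[
	\chi(f)(g) \;=\; \chi(\varrho_T)(t) \;=\; \tr \varrho_{A'}(t) \;=\; \tr \varrho_P(g) \;=\; \chi(\varrho_P)(g).
\]

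The main obstacle is that $\overline{\langle g\rangle}$ need not be $p$-toral for every $g \in P$ — non-$p$-power torsion in $T_P$ supplies counterexamples — so Theorem \ref{t:DZN} is not directly applicable at every element. Moreover, the kernel of the restriction $R(P) \to R(T_P)$ is generally too large to allow a reduction purely through the maximal torus of $P$. The density argument circumvents both issues by working only at elements whose cyclic closures themselves lie within the scope of Theorem \ref{t:DZN}.
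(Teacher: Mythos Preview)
Your overall strategy is correct and is a careful elaboration of what the paper's one-line proof leaves implicit: the paper simply applies Theorem~\ref{t:DZN} to produce $\varrho_P$ with $B\varrho_P \sim f|_{BP}$ and tacitly identifies $\chi(f)|_P$ with $\chi(\varrho_P)$, whereas you actually supply that verification via a pointwise comparison on a dense subset. The chain of equalities at the end is fine.

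There is, however, a small gap in your density argument. You claim that for a generic $g$ in each coset $g_0 T_P$ the element $g^{p^k}$ topologically generates a subtorus of $T_P$. This can fail. Writing $\sigma$ for conjugation by $g_0$ on $T_P$, one has $(g_0 t)^{p^k} = g_0^{p^k}\cdot N(t)$ with $N(t)=\prod_{i=0}^{p^k-1}\sigma^i(t)$; if $\sigma$ has no nonzero fixed vectors on $\mathrm{Lie}(T_P)$ then $N\equiv 1$, so $g^{p^k}=g_0^{p^k}$ is \emph{constant} on the coset. In a non-split extension this constant can be a nontrivial $p$-torsion element of $T_P$, and then $\overline{\langle g^{p^k}\rangle}$ is a nontrivial finite cyclic group for \emph{every} $g$ in that coset, never a subtorus. (Concretely, take $p=3$, $T_P=(S^1)^2$, and $\sigma$ acting by a matrix of order $3$ in $\GL_2(\Z)$; then $N=0$ on the Lie algebra and $H^2(\Z/3;T_P)\cong\Z/3$ furnishes a non-split $P$ with $g_0^3\neq 1$.)

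The conclusion that $D$ is dense is nonetheless true, and the fix is easy: simply replace $D$ by the set of elements of $p$-power order in $P$. These are dense in any $p$-toral group (for instance via the standard discrete approximation of $P$ by an increasing union of finite $p$-subgroups), and any such element generates a finite cyclic $p$-group, which is certainly $p$-toral. Your chain of trace equalities then goes through unchanged for every $g$ of $p$-power order.
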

\begin{proof}
	If $f:BG\to BU(d)$ is a homotopy representation and $P\subseteq G$ a $p$-subgroup, then \ref{t:DZN} implies that the restriction $f|_{BP}$ is induced by a unitary representation of $P$.
\end{proof}

\begin{df}
	A $\mathcal{P}$-character which is the character of a homotopy representation will be called \emph{a homotopy character}; the semiring of homotopy characters of $G$ will be denoted by $R_h(G)$.
\end{df}	

 Note that there is a sequence of inclusions
\begin{equation}
	R^+(G)\subseteq R_h(G)\subseteq R_{\mathcal{P}}(G)\subseteq  R(G)\cong R(T)^W\subseteq R(T).
\end{equation}

\subsection*{$p$-homotopy characters}

\begin{df}\label{d:pGenuine}
	Let $p$ be a prime integer. \emph{A $p$-homotopy representation} of $G$ is a map $f_p:BG\rightarrow BU(d)_p^\wedge$. We say that a $\mathcal{P}$-character $\mu\in R_{\mathcal{P}}(G)$ is a character of $f_p$ if the diagram
\[
	\begin{diagram}
		\node{BT}
			\arrow{e,t}{B\mu_T}
			\arrow{s,l}{B(T\subseteq G)}
		\node{BU(d)}
			\arrow{s,r}{(-)_p^\wedge}
	\\	
		\node{BG}
			\arrow{e,t}{f_p}
		\node{BU(d)_p^\wedge}
	\end{diagram}
\]
commutes. A $\cP$-character which is the character of a $p$-homotopy representation is called \emph{a $p$-homotopy character}.
\end{df}

\begin{prp}\label{p:PrimeHChar}
	A $\mathcal{P}$-character $\mu$ is a homotopy character if and only if it is a $p$-homotopy character for all primes $p$.
\end{prp}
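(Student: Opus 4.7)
For the forward direction, if $f : BG \to BU(d)$ realizes $\mu$, then for every prime $p$ the composite $f_p := (-)_p^\wedge \circ f : BG \to BU(d)_p^\wedge$ is a $p$-homotopy representation whose defining square is obtained from that of $f$ by $p$-completing the right-hand column.

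For the converse, I would assemble the family $\{f_p\}$ into a global map via the Sullivan arithmetic fracture square. Since $BU(d)$ is simply connected of finite type, the square
\[
\begin{diagram}
\node{BU(d)} \arrow{e} \arrow{s} \node{\prod_p BU(d)_p^\wedge} \arrow{s} \\
\node{BU(d)_\Q} \arrow{e} \node{\bigl(\prod_p BU(d)_p^\wedge\bigr)_\Q}
\end{diagram}
\]
is a homotopy pullback. The family $\{f_p\}$ supplies the top-right leg $BG \to \prod_p BU(d)_p^\wedge$. For the bottom-left leg I would exploit the formality of $BU(d)$: since $H^*(BU(d);\Q) = \Q[c_1,\dots,c_d]$, the total Chern class gives a rational equivalence $BU(d)_\Q \simeq \prod_{i=1}^d K(\Q, 2i)$, so the Weyl-invariant classes $c_i(\mu_T) \in H^{2i}(BT;\Q)^W \cong H^{2i}(BG;\Q)$ define the required $f_\Q : BG \to BU(d)_\Q$.

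To apply the pullback, the two compositions $BG \to \bigl(\prod_p BU(d)_p^\wedge\bigr)_\Q$ must be shown homotopic. The target is a rational space with cohomology detected by Chern classes in each factor, so it suffices to check agreement in cohomology; restricted to $BT$, both maps agree with the rationalization of $(B\mu_T)_p^\wedge$ in each factor (by construction for $f_\Q$, and by the defining diagram for each $f_p$). Since $H^*(BG;\Q) \hookrightarrow H^*(BT;\Q)$ is injective for a connected compact Lie group, agreement on $BT$ forces agreement on $BG$, producing the needed homotopy and hence $f : BG \to BU(d)$. A final application of the arithmetic square to $BT$ shows $f|_{BT} \simeq B\mu_T$, so $\chi(f) = \mu$. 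The main technical point I foresee is handling the infinite product $\prod_p BU(d)_p^\wedge$: the Milnor sequence for $[BG,-]$ of this product involves a $\lim^1$ term that I would need to show vanishes, which follows from the cohomology of $BG$ with $p$-adic coefficients being finitely generated in each degree, so the inverse systems are Mittag--Leffler.
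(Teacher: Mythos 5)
Your argument is correct in outline, but it takes a different route from the paper: the paper disposes of this proposition in one line by citing \cite[1.2]{JMOrev}, whose content is exactly the arithmetic-fracture-square gluing statement (for maps into $BH$ with $H$ an arbitrary compact Lie group) that you reprove by hand in the special case $H=U(d)$. What the citation buys is generality and the outsourcing of the delicate points; what your argument buys is self-containedness, and it exploits the special structure of $BU(d)$ nicely --- rational formality via Chern classes to build $f_\Q$ from the Weyl-invariant classes $c_i(\mu_T)\in H^*(BT;\Q)^W\cong H^*(BG;\Q)$, and injectivity of $H^*(BG;\Q)\to H^*(BT;\Q)$ to compare the two legs. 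Two caveats on your sketch. First, the step ``the target is a rational space with cohomology detected by Chern classes in each factor'' needs justification: the corner is $\bigl(\prod_p BU(d)_p^\wedge\bigr)_\Q$, the rationalization of the product, not a product of rationalizations, so it has no ``factors''; to conclude that homotopy classes $BG\to$ corner are detected by rational cohomology you must either show the corner is a generalized Eilenberg--MacLane space (its $k$-invariants are rationalizations of torsion classes, since $BU(d)_\Q$ splits, and the unstable homotopy of $U(d)$ above the stable range is finite), or run obstruction theory for the homotopy between the two composites and use that $H^{\mathrm{odd}}(BG;V)=0$ for a connected compact $G$ and a $\Q$-vector space $V$. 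Second, your worry about a $\lim^1$ term is misplaced: maps \emph{into} the honest product $\prod_p BU(d)_p^\wedge$ satisfy $[BG,\prod_p BU(d)_p^\wedge]=\prod_p[BG,BU(d)_p^\wedge]$ with no Milnor sequence needed; the only finiteness input required is the one already used for the fracture square (that $BU(d)$ is simply connected of finite type). With these points repaired, your construction of $f$ and the final identification $f|_{BT}\simeq B\mu_T$ (hence $\chi(f)=\mu$) go through and give an alternative, essentially self-contained proof of the proposition.
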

\begin{proof}
	It is an immediate consequence of \cite[1.2]{JMOrev}.
\end{proof}

\subsection*{Subgroup homotopy decomposition}

Let $p$ be a prime integer.
\begin{df}
	A $p$-toral subgroup $P\subseteq G$ is \emph{$p$-stubborn} if $N_G(P)/P$ is a discrete group and contains no non-trivial normal $p$-subgroups. Let $\mathcal{R}_p(G)$ be the category of $G$-orbits having the form $G/P$ for a $p$-stubborn $P\subseteq G$ and $G$-maps.
\end{df}

\begin{thm}[{\cite[Theorem 1.4]{JMO}}]\label{t:JMODecomp}
	The map
	\begin{equation*}
		\varepsilon^p_G:\operatorname*{hocolim}_{G/P\in\mathcal{R}_p(G)} EG\times_G G/P\rightarrow EG\times_G *\cong BG
	\end{equation*}
	induced by projections $G/P\rightarrow *$ induces an equivalence on homology with $\F_p$-coefficients. In particular, the map
	\begin{equation*}
		(\varepsilon^p_G)_p^\wedge:\left(\operatorname*{hocolim}_{G/P\in\mathcal{R}_p(G)} EG\times_G G/P\right)_p^\wedge \rightarrow  BG_p^\wedge
	\end{equation*}	
	is a homotopy equivalence.
\end{thm}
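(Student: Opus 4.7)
The plan is to reformulate the theorem as an $\F_p$-homology equivalence between homotopy colimits and then verify it via the Bousfield--Kan spectral sequence, reducing the whole matter to a sharpness statement.

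First I would use the natural equivalence $EG\times_G G/P\simeq BP$, under which the projection to $EG\times_G *\cong BG$ corresponds to the map $BP\to BG$ induced by inclusion, to rewrite $\varepsilon^p_G$ as the canonical map
\[
\hocolim_{G/P\in\mathcal{R}_p(G)} BP\longrightarrow BG.
\]
The passage from an $\F_p$-homology equivalence to a homotopy equivalence after $p$-completion is automatic from Bousfield--Kan, since both sides are $\F_p$-good. It therefore suffices to prove the mod $p$ homology equivalence.

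For this I would set up the Bousfield--Kan cohomology spectral sequence
\[
E_2^{s,t}=\lim{}^s_{\mathcal{R}_p(G)^{op}}H^t(BP;\F_p)\Longrightarrow H^{s+t}\Bigl(\hocolim_{\mathcal{R}_p(G)} BP;\F_p\Bigr),
\]
and reduce the statement to two claims: (i) the edge homomorphism identifies $\lim^0_{\mathcal{R}_p(G)^{op}} H^*(B(-);\F_p)$ with $H^*(BG;\F_p)$ via the restriction maps; and (ii) $\lim^s_{\mathcal{R}_p(G)^{op}} H^t(B(-);\F_p)=0$ for every $s>0$ and $t\geq 0$.

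The main obstacle is the vanishing (ii), and this is precisely the step where the geometry of \emph{stubborn} (rather than merely $p$-toral) subgroups becomes indispensable. My approach would be to exploit Lannes' $T$-functor together with the Dwyer--Zabrodsky--Notbohm theorem recalled in \ref{t:DZN}, which on every elementary abelian $p$-subgroup $V\subseteq G$ identifies $\map(BV,BG)_p^\wedge\simeq \coprod_\rho BC_G(\rho(V))_p^\wedge$ with $\rho$ running over $\Rep(V,G)$. Feeding this into a Grothendieck-type change-of-category spectral sequence, one can express the higher limits over $\mathcal{R}_p(G)^{op}$ in terms of limits over a poset of chains of elementary abelians together with local contributions indexed by Weyl groups $N_G(P)/P$. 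The crucial input is then the defining property of $p$-stubbornness: each $N_G(P)/P$ is a finite group without nontrivial normal $p$-subgroup, so Quillen's theorem guarantees that the poset of its nontrivial elementary abelian $p$-subgroups is $\F_p$-acyclic. Propagating this acyclicity through the change-of-category spectral sequence yields the vanishing (ii); claim (i) then drops out as the stable-element formula applied to the collection $\mathcal{R}_p(G)$. The spectral sequence collapses onto the bottom row, producing the required mod $p$ homology equivalence and hence, after $p$-completion, the asserted homotopy equivalence.
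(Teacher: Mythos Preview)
The paper does not prove this theorem at all: it is quoted verbatim from \cite[Theorem~1.4]{JMO} and used as a black box, so there is no ``paper's own proof'' to compare against. Your attempt is therefore an independent proof sketch of a result the authors simply cite.

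That said, your sketch has a genuine error at its crucial step. You assert that because $N_G(P)/P$ has no nontrivial normal $p$-subgroup, ``Quillen's theorem guarantees that the poset of its nontrivial elementary abelian $p$-subgroups is $\F_p$-acyclic.'' This is false, and in fact essentially backwards. Quillen's results in this area say that when $O_p(\Gamma)\neq 1$ the poset $\mathcal{S}_p(\Gamma)$ is contractible; Quillen's \emph{conjecture} (and the cases of it that are known) go the other way, predicting that $\mathcal{S}_p(\Gamma)$ is \emph{not} contractible when $O_p(\Gamma)=1$. Easy examples already kill your claim: for $\Gamma=\Sigma_3$ at $p=2$ one has $O_2(\Sigma_3)=1$, and the poset of nontrivial elementary abelian $2$-subgroups is three isolated points, which is not $\F_2$-acyclic. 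So the mechanism you propose for the vanishing of $\lim^{>0}$ does not exist, and with it the collapse of the Bousfield--Kan spectral sequence is unproven. Your claim~(i), that $\lim^0$ recovers $H^*(BG;\F_p)$ as a stable-elements formula for the collection $\mathcal{R}_p(G)$, is also nontrivial and not something you can assume; it is essentially part of what needs to be established.

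For orientation, the actual argument in \cite{JMO} does not go through sharpness of the spectral sequence. It is geometric: one builds a suitable $G$-space out of the family of $p$-toral subgroups whose Borel construction is $\F_p$-equivalent to $BG$ (this uses that a universal space for that family is mod~$p$ acyclic), identifies the Borel construction with the relevant homotopy colimit, and then prunes from all $p$-toral subgroups down to $p$-stubborn ones by showing that dropping a non-stubborn $P$ does not change the mod~$p$ homotopy type of the colimit---this last step is exactly where the condition $O_p(N_G(P)/P)=1$ enters, but as a pruning criterion, not via any acyclicity of a Quillen poset.
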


\begin{rem}
	If $\mu$ is a $\mathcal{P}$-character of $G$ of dimension $d$, then the family
	\[
		EG\times_G G/P\simeq BP\xrightarrow{B\mu_P} BU(d)
	\]
	is a homotopy compatible family of maps from the decomposition functor $EG\times_G(-)$ to $BU(d)$.
\end{rem}

\subsection*{Functors $\Lambda^*$}
Let $p$ be a prime. Here we recall methods of calculating higher limits of certain functors on the category $\mathcal{R}_p(G)$ developed in \cite{JMO}.
For a commutative ring $R$ and a small category $\cC$ we define \emph{$R[\cC]$-modules} as contravariant functors from $\cC$ into the category of $R$-modules. If $F$ is an $R[\cC]$-module, then $H^i(\cC; F)$ denotes the $i$-th  right derived functor of the inverse limit of $F$ and will be referred to as the $i$-th coholomology group of $\cC$ with coeffcients $M$; this notion coincides with group cohomology if $\cC$ is a single-object category with invertible morphisms.

 For a finite group $\Gamma$ let $\mathcal{O}_p(\Gamma)$ be the category of $\Gamma$-orbits whose isotropy groups are $p$-groups and $\Gamma$-maps. For a $\Z_p^\wedge[\Gamma]$-module $M$ and $i\geq 0$ define groups
\begin{equation}\label{e:DefLambda}
	\Lambda^i(\Gamma;M):=H^i(\mathcal{O}_p(\Gamma); F_M^{\Gamma}),
\end{equation}
where $F^\Gamma_M$ is a $\Z_p^\wedge[\mathcal{O}_p(\Gamma)]$-module defined by
\begin{equation}
	F_M^\Gamma(\Gamma/P)=\begin{cases} M & \text{for $P=1$}\\ 0 & \text{otherwise.}\end{cases}
\end{equation}
These groups play an important role in calculating cohomology of $\Z_p^\wedge[\mathcal{R}_p(G)]$-modules. Their crucial property is the following:

\begin{prp}\label{p:LambdaCatVan}
	Let $G$ be a compact Lie group, $F$ a $\Z_p^\wedge[\mathcal{R}_p(G)]$-module and $r$ an integer. Assume that $\Lambda^i(N_G(P)/P;F(G/P))=0$ for all $i\geq r$ and all $G/P\in\mathcal{R}_p(G)$. Then $H^i(\mathcal{R}_p(G);F)=0$ for $i\geq r$.
\end{prp}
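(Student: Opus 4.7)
The plan is to reduce to \emph{atomic} functors supported on a single conjugacy class of $p$-stubborn subgroups, and then identify the higher limits of such atomic functors with the $\Lambda^*$-groups appearing in the hypothesis. The key ingredients are the EI-structure of $\mathcal{R}_p(G)$ (every endomorphism is an automorphism, so a morphism $G/Q\to G/Q'$ exists only when $Q$ is subconjugate to $Q'$) and the finiteness, up to $G$-conjugacy, of the set of $p$-stubborn subgroups of $G$, established in \cite{JMO}.

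First I would pick conjugacy class representatives $P_1,\dots,P_k$ and linearly order them so that $P_i$ subconjugate to $P_j$ implies $i\le j$. This is possible since subconjugacy is a partial order on conjugacy classes. Next I would build a decreasing filtration $F=F_0\supseteq F_1\supseteq\cdots\supseteq F_k=0$ by subfunctors given by $F_j(G/Q)=F(G/Q)$ if $Q$ is conjugate to some $P_i$ with $i>j$, and $F_j(G/Q)=0$ otherwise. The chosen ordering, together with the EI-property, guarantees that the $F_j$ are genuine subfunctors, and each quotient $A_j:=F_{j-1}/F_j$ is concentrated on the conjugacy class of $P_j$ with value $F(G/P_j)$ there.

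The core step is the atomic computation: for every atomic functor $A$ supported on the class of $G/P$ with $A(G/P)=M$, one has
\[
H^i(\mathcal{R}_p(G); A)\cong \Lambda^i(N_G(P)/P; M).
\]
The idea is that the full subcategory of $\mathcal{R}_p(G)$ on objects $G/Q$ admitting a morphism into $G/P$ maps to $\mathcal{O}_p(N_G(P)/P)$ via $G/Q\mapsto (G/P)^Q$; under the normalizer correspondence, $p$-stubborn subgroups $Q$ subconjugate into $P$ translate precisely to $p$-subgroup isotropy in $N_G(P)/P$, and the atomic functor $A$ pulls back to the functor $F_M^{N_G(P)/P}$ of \ref{e:DefLambda}. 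A cofinality argument then equates their higher limits. Granting this, downward induction on $j$ using the long exact sequence coming from $0\to F_j\to F_{j-1}\to A_j\to 0$ finishes the proof: at each stage $H^i(\mathcal{R}_p(G); A_j)\cong\Lambda^i(N_G(P_j)/P_j; F(G/P_j))$ vanishes for $i\ge r$ by hypothesis, so vanishing propagates from $F_j$ to $F_{j-1}$, and eventually to $F=F_0$.

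The main obstacle is the atomic identification. Beyond the bookkeeping of the filtration, one has to verify carefully that passing to $(G/P)^Q$ realises an equivalence between the relevant full subcategory of $\mathcal{R}_p(G)$ and a cofinal subcategory of $\mathcal{O}_p(N_G(P)/P)$ under which atomic functors match, and that the $p$-stubbornness condition on $Q\subseteq P$ corresponds exactly to the $p$-subgroup isotropy condition in $\mathcal{O}_p(N_G(P)/P)$. This uses specifically the defining property that $N_G(P)/P$ contains no non-trivial normal $p$-subgroup.
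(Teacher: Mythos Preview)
The paper's own proof is a bare citation to \cite[1.8]{JMO} and \cite[1.10(ii)]{JMOrev}; what you have written is essentially an outline of the argument behind those references. The overall architecture---finitely many conjugacy classes of $p$-stubborn subgroups, a filtration of $F$ by atomic functors on the EI-category $\mathcal{R}_p(G)$, and an identification of the atomic higher limits with $\Lambda^*$---is exactly the JMO method, so strategically you are on target.

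Two orientation slips need repair. First, with your ordering ($P_i$ subconjugate to $P_j\Rightarrow i\le j$) and your definition ($F_j=F$ on the classes with index $>j$), the $F_j$ are \emph{not} subfunctors: a morphism $G/P_a\to G/P_b$ with $a\le j<b$ induces $F(G/P_b)\to F(G/P_a)$, which would have to land in $F_j(G/P_a)=0$. Either reverse the ordering or regard the $F_j$ as successive quotients. Second, for an atomic $A$ supported on the class of $G/P$, the cochain complex (cf.\ Section~\ref{s:Obstr}) has $C^n=\prod_{\sigma}A(\sigma(0))$, so the relevant chains \emph{start} at $G/P$; the pertinent full subcategory consists of $G/Q$ receiving a map \emph{from} $G/P$, i.e.\ $Q\supseteq P$ up to conjugacy, not the over-category you describe.

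More substantively, even after correcting the direction your cofinality sketch via $G/Q\mapsto (G/P)^Q$ does not work: as an $\Aut(G/P)$-set under post-composition, $(G/P)^Q$ consists entirely of free orbits, so this assignment cannot yield an equivalence with $\mathcal{O}_p(N_G(P)/P)$. The actual route in \cite{JMO} relates $p$-stubborn subgroups $Q$ with $P\subseteq Q\subseteq N_G(P)$ to $p$-\emph{radical} subgroups $Q/P$ of $\Gamma=N_G(P)/P$, and separately shows that higher limits of the atomic functor $F_M^\Gamma$ over $\mathcal{O}_p(\Gamma)$ agree with those over its radical subcategory. That two-step reduction is the real technical content the paper is outsourcing, and your sketch does not yet reach it.
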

\begin{proof}
	This follows from \cite[1.8]{JMO} and \cite[1.10.(ii)]{JMOrev}.
\end{proof}

Let us recall several properties of functors $\Lambda$ proven in \cite{JMO}:
\begin{prp}\label{p:LambdaProperties}
	Let $\Gamma$ be a finite group and let $M$ be a $\Z_p^\wedge[\Gamma]$-module.
	\begin{enumerate}
	\item{If $p$ divides the order of $\Gamma$, then $\Lambda^0(\Gamma;M)=0$. Otherwise $\Lambda^0(\Gamma;M)=M^\Gamma$.}
	\item{Let $H=\ker(\Gamma\to \Aut(M))$. If $p$ divides the order of $H$, then $\Lambda^*(\Gamma;M)=0$; otherwise $\Lambda^*(\Gamma;M)=\Lambda^*(\Gamma/H;M)$.}
	\end{enumerate}
\end{prp}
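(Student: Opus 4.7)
The plan is to unfold the definition $\Lambda^i(\Gamma;M)=H^i(\mathcal{O}_p(\Gamma);F_M^\Gamma)$ and work directly inside the orbit category, case-splitting on whether $p$ divides the relevant group order. For part~(1) I would compute $H^0$ as the inverse limit of $F_M^\Gamma$. If $p\nmid|\Gamma|$, the only $p$-subgroup of $\Gamma$ is trivial, so $\mathcal{O}_p(\Gamma)$ reduces to the one-object groupoid with automorphism group $\Gamma$ acting on $M$, and $\Lambda^0(\Gamma;M)=M^\Gamma$. If instead $p\mid|\Gamma|$, Cauchy's theorem supplies a non-trivial $p$-subgroup $P\subseteq\Gamma$ together with a $\Gamma$-equivariant map $\alpha\colon\Gamma/1\to\Gamma/P$; the induced contravariant map $F_M^\Gamma(\alpha)\colon F_M^\Gamma(\Gamma/P)=0\to F_M^\Gamma(\Gamma/1)=M$ forces the $\Gamma/1$-component of any section to equal $F_M^\Gamma(\alpha)(0)=0$, so $\Lambda^0(\Gamma;M)=0$.

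For part~(2), the key is to compare $\mathcal{O}_p(\Gamma)$ with $\mathcal{O}_p(\Gamma/H)$ via the functor $\pi_\ast\colon\Gamma/P\mapsto(\Gamma/H)/(PH/H)$, which is well-defined because $PH/H\cong P/(P\cap H)$ is a quotient of a $p$-group. When $p\nmid|H|$, the fibres of $\pi_\ast$ have isotropy given by subquotients of $H$, all of order prime to $p$; a standard over-category/cofinality argument (computing the $\mathbb{F}_p$-homology of the nerves of those fibres) then yields an isomorphism $\Lambda^\ast(\Gamma/H;M)\cong\Lambda^\ast(\Gamma;M)$. When $p\mid|H|$, I would pick a non-trivial $p$-subgroup $P_0\subseteq H$ and use normality of $H$ to produce, at every level of the cobar complex computing $\Lambda^\ast(\Gamma;M)$, extra morphisms into orbits on which $F_M^\Gamma$ vanishes; together with part~(1) this should provide a chain-level null-homotopy and force vanishing in every degree.

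The main obstacle is the case $p\mid|H|$ of part~(2): establishing acyclicity in every degree, not merely $H^0$, requires a genuine chain-level computation, most naturally via either an explicit projective resolution of $F_M^\Gamma$ as a $\Z_p^\wedge[\mathcal{O}_p(\Gamma)]$-module or the Grothendieck-construction spectral sequence for $\pi_\ast$. The cleanest argument of this kind appears in \cite[Sec.~5]{JMO}, and rather than reproduce it I would cite that paper and only verify that the conventions agree.
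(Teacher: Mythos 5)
Your proposal is correct in substance, but it is worth saying how it sits relative to the paper: the paper's entire proof is the citation ``this follows from \cite[6.1]{JMO}'', whereas you prove part (1) directly from the definition and only fall back on the citation for part (2). Your argument for (1) is right and complete: when $p\nmid|\Gamma|$ the category $\mathcal{O}_p(\Gamma)$ has the single object $\Gamma/1$ with automorphism group $\Gamma$, so the inverse limit of $F_M^\Gamma$ is $M^\Gamma$; when $p\mid|\Gamma|$, Cauchy's theorem gives a nontrivial $p$-subgroup $P$, and the projection $\Gamma/1\to\Gamma/P$ forces the $\Gamma/1$-component of any element of $\lim F_M^\Gamma$ to lie in the image of $F_M^\Gamma(\Gamma/P)=0$, so $\Lambda^0(\Gamma;M)=0$. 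This buys a small amount of self-containedness that the paper does not attempt. For part (2), however, be aware that the two sketches you give are exactly where the real work lies and neither is complete as written: for $p\nmid|H|$ the useful observation is that $F_M^\Gamma$ is the pullback of $F_M^{\Gamma/H}$ along $\pi_*$ (since $P\cap H=1$ forces $PH/H=1$ only for $P=1$), but passing from that to an isomorphism of higher limits still needs a genuine Leray/cofinality argument over $\pi_*$; and for $p\mid|H|$ the promised ``chain-level null-homotopy'' is precisely the nontrivial content of \cite[6.1]{JMO}, not a routine consequence of part (1). Since you end by citing that result anyway, your proof is in effect the same as the paper's for part (2) and there is no gap, but you should cite \cite[6.1]{JMO} (the properties are stated and proved there; Section 5 only sets up the definition) and verify that the definition \ref{e:DefLambda} used here agrees with theirs, as you indicate.
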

\begin{proof}
	This follows from \cite[6.1]{JMO}.
\end{proof}

In the following proposition all tensor products are taken with respect to the ring $\Z_p^\wedge$.

\begin{prp}\label{p:LambdaExtLemma}
	Let $\Gamma$ be a finite group, $M$ a finitely generated $\Z_p^\wedge$-module, $X$ a finite $\Gamma$-set and $r\geq 0$ an integer. Assume that $\Lambda^i(\Gamma;\Z_p^\wedge[X])=0$ for every $i\geq r$. Then $\Lambda^i(\Gamma;M\otimes \Z_p^\wedge[X])=0$ for $i\geq r$.
\end{prp}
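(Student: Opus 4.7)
The plan is to reduce the statement to two elementary cases by exploiting the structure of $M$ and the exactness of $\Lambda^*$ in the coefficient module.

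First, I would observe that the functor $M\mapsto F_M^\Gamma$ from $\Z_p^\wedge[\Gamma]$-modules to $\Z_p^\wedge[\mathcal{O}_p(\Gamma)]$-modules is exact: on the orbit $\Gamma/1$ it is the identity, and on all other objects of $\mathcal{O}_p(\Gamma)$ it is the zero functor. Composing with the derived functors of $\varprojlim$, we conclude that $\Lambda^*(\Gamma;-)$ is a cohomological $\delta$-functor in its module argument, in particular it carries finite direct sums of modules to direct sums and admits long exact sequences associated with short exact sequences of $\Z_p^\wedge[\Gamma]$-modules.

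Since $\Z_p^\wedge$ is a discrete valuation ring (in particular a PID), the structure theorem gives a decomposition
\[
M\;\cong\;(\Z_p^\wedge)^{a}\oplus\bigoplus_{j=1}^{b}\Z/p^{k_j}
\]
with trivial $\Gamma$-action on each summand. Tensoring with $\Z_p^\wedge[X]$ respects direct sums, so by the additivity of $\Lambda^*$ it suffices to prove the vanishing when $M$ is one cyclic summand. If $M=\Z_p^\wedge$, then $M\otimes\Z_p^\wedge[X]\cong \Z_p^\wedge[X]$ as $\Z_p^\wedge[\Gamma]$-modules, and the conclusion is the hypothesis.

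It remains to treat $M=\Z/p^k$. Consider the short exact sequence of $\Z_p^\wedge$-modules
\[
0\to \Z_p^\wedge \xrightarrow{\;p^k\;}\Z_p^\wedge \to \Z/p^k\to 0.
\]
Because $\Z_p^\wedge[X]$ is a free $\Z_p^\wedge$-module (hence flat), tensoring yields a short exact sequence of $\Z_p^\wedge[\Gamma]$-modules
\[
0\to \Z_p^\wedge[X]\xrightarrow{\;p^k\;} \Z_p^\wedge[X]\to \Z/p^k\otimes\Z_p^\wedge[X]\to 0.
\]
The associated long exact sequence contains the segment
\[
\Lambda^i(\Gamma;\Z_p^\wedge[X])\to \Lambda^i(\Gamma;\Z/p^k\otimes\Z_p^\wedge[X])\to \Lambda^{i+1}(\Gamma;\Z_p^\wedge[X]).
\]
For $i\geq r$ both outer groups vanish by hypothesis (since $i+1\geq r$ as well), hence the middle group vanishes. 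This completes the reduction and the proof.

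The only mildly delicate ingredient is the existence of the long exact sequence, i.e.\ the exactness of $M\mapsto F_M^\Gamma$; but that is immediate from the point-wise formula defining $F_M^\Gamma$, so I do not anticipate any real obstacle.
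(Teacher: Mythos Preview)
Your argument is correct. Both proofs ultimately use that the vanishing of $\Lambda^i(\Gamma;\Z_p^\wedge[X])$ and $\Lambda^{i+1}(\Gamma;\Z_p^\wedge[X])$ forces the vanishing of $\Lambda^i(\Gamma;M\otimes\Z_p^\wedge[X])$, but they package this differently.

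The paper invokes the K\"unneth-type exact sequence of \cite[6.1.(v)]{JMO},
\[
0\to\Tor(\Lambda^{i+1}(\Gamma;\Z_p^\wedge[X]),M)\to \Lambda^i(\Gamma;\Z_p^\wedge[X]\otimes M)\to \Lambda^i(\Gamma;\Z_p^\wedge[X])\otimes M\to 0,
\]
and reads off the result in one line. You instead observe that $\Lambda^*(\Gamma;-)$ is a $\delta$-functor, decompose $M$ via the structure theorem over the DVR $\Z_p^\wedge$, and handle the torsion summands $\Z/p^k$ by the long exact sequence associated to multiplication by $p^k$ on $\Z_p^\wedge[X]$. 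Your route is more self-contained---it avoids importing the K\"unneth machinery---at the cost of an explicit case split; the paper's route is shorter but relies on a cited result that is itself nontrivial. Either way the essential input is identical: one needs vanishing in degrees $i$ \emph{and} $i+1$, and both arguments make this visible.
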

\begin{proof}
	By \cite[6.1.(v)]{JMO} there is an exact sequence
	\[
	0 \to \bigoplus_{k+l=i+1} \Tor(\Lambda^k(\Gamma;\Z_p^\wedge[X]),\Lambda^l(1;M))\to \Lambda^i(\Gamma;\Z_p^\wedge[X]\otimes M)\to
	\bigoplus_{k+l=i}\Lambda^{k}(\Gamma;\Z_p^\wedge)\otimes \Lambda^l(1;M)\to 0
	\]
	which by \cite[6.1.(i)]{JMO} reduces to
	\[
	0 \to \Tor(\Lambda^{i+1}(\Gamma;\Z_p^\wedge[X]),M)\to \Lambda^i(\Gamma;\Z_p^\wedge[X]\otimes M)\to
	\Lambda^{i}(\Gamma;\Z_p^\wedge)\otimes M\to 0		
	\]
	By assumption $\Lambda^{i}(\Gamma;\Z_p^\wedge[X])=\Lambda^{i+1}(\Gamma;\Z_p^\wedge[X])=0$. The conclusion follows.
\end{proof}

\section{Guide to the argument}

In this Section we prove Theorem \ref{t:MainCrit} and then deduce Theorem \ref{t:Main}. Let $G$ be a compact connected Lie group, and $T\subseteq G$ its maximal torus. Let $\mu$ and $\nu$ be $\mathcal{P}$-characters of $G$ having dimensions $d$ and $d'$ respectively and assume that $\mu+\nu$ is a homotopy character. Our goal is to check that, under certain assumptions, both $\mu$ and $\nu$ are homotopy characters. By \ref{p:PrimeHChar} it is sufficient to prove that they are $p$-homotopy characters for all primes $p$. Fix a prime $p$ and a $p$-homotopy representation $f:BG\to BU(d+d')_p^\wedge$ having character $\mu+\nu$. We will prove a sequence of criteria (\ref{p:Sketch}, \ref{p:MTPLift}, \ref{t:Crit}) which imply that $\mu$ and $\nu$ are $p$-homotopy characters.

\subsection*{A lifting extension problem}
The first step in the proof of Theorem \ref{t:MainCrit} is the following observation:

\begin{prp}\label{p:Sketch}
	If there exists a map $g:BG\to (BU(d)\times BU(d'))_p^\wedge$ such that the diagram
	\[
		\begin{diagram}
			\node{BT}
				\arrow[3]{e,t}{(B\mu_T\times B\nu_T)_p^\wedge}
				\arrow{s,l}{B(T\subseteq G)}
			\node{}
			\node{}
			\node{(BU(d)\times BU(d'))_p^\wedge}
				\arrow{s,r}{\oplus}
		\\
			\node{BG}
				\arrow[3]{e,t}{f}
				\arrow{ene,t}{g}
			\node{}
			\node{}
			\node{BU(d+d')_p^\wedge}
		\end{diagram}
	\]
	commutes up to homotopy, then $\mu$ and $\nu$ are $p$-homotopy characters.
\end{prp}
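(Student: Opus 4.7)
The plan is to show that the hypothetical lift $g$ splits into two maps, one with character $\mu$ and one with character $\nu$, simply by projecting onto the two factors of the product. The only input beyond bookkeeping is that Bousfield--Kan $p$-completion commutes with finite products of nilpotent (in particular simply connected) spaces, so that the canonical comparison
\[
	(BU(d)\times BU(d'))_p^\wedge \xrightarrow{\;\simeq\;} BU(d)_p^\wedge \times BU(d')_p^\wedge
\]
is a homotopy equivalence. This lets us define $g_1:BG\to BU(d)_p^\wedge$ and $g_2:BG\to BU(d')_p^\wedge$ as the two coordinates of $g$ under this identification.

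Next I would verify that $g_1$ and $g_2$ have the expected restrictions to $BT$. The commutativity of the upper triangle in the displayed diagram gives $g\circ B(T\subseteq G) \simeq (B\mu_T\times B\nu_T)_p^\wedge$, and projecting to the first factor yields
\[
	g_1\circ B(T\subseteq G) \simeq (B\mu_T)_p^\wedge,
\]
with the analogous statement for $g_2$ and $B\nu_T$. By Definition \ref{d:pGenuine}, this is exactly the assertion that $\mu$ is the character of the $p$-homotopy representation $g_1$ and $\nu$ is the character of $g_2$.

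There is essentially no obstacle in this reduction: the proposition is a formal consequence of the product decomposition of $p$-completion together with the definition of a $p$-homotopy character. (The commutativity of the lower triangle, $\oplus\circ g\simeq f$, is not actually needed to conclude that $\mu$ and $\nu$ are $p$-homotopy characters; the content of the proposition is that this lifting extension problem is the right vehicle to attack, since the lower triangle is what will be exploited in the subsequent stronger reformulations \ref{p:MTPLift} and \ref{t:Crit}.) The real work, which I defer to later sections, will be the construction of such a $g$, carried out via the obstruction theory for lifting maps out of homotopy colimits applied to the Jackowski--McClure--Oliver subgroup decomposition of $BG_p^\wedge$.
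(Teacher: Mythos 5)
Your argument is correct and is essentially the paper's own proof: the paper likewise composes $g$ with the projection $(BU(d)\times BU(d'))_p^\wedge\to BU(d)_p^\wedge$, notes that the restriction to $BT$ is homotopic to $(B\mu_T)_p^\wedge$ by the upper triangle, and concludes from Definition \ref{d:pGenuine}, with the symmetric argument for $\nu$. Your parenthetical remark that the lower triangle is not needed for this conclusion is consistent with the paper, which also only uses the upper triangle here.
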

\begin{proof}
	The restriction of the composition
	\[
		f_1:BG\xrightarrow{g}(BU(d)\times BU(d'))_p^\wedge\xrightarrow{proj} BU(d)_p^\wedge
	\]
	to $BT$ is homotopic to $(B\mu_T)_p^\wedge$. Therefore, $f_1$ is a $p$-homotopy representation having character $\mu$. A similar argument applies to $\nu$.
\end{proof}

\subsection*{A homotopy decomposition}
We will use the homotopy decomposition \ref{t:JMODecomp} to find criteria for existence a lifting extension in the diagram (\ref{p:Sketch}). Consider the diagram
\begin{equation}\label{e:LiftingGFull}
	\begin{diagram}
		\node{\coprod_{G/P\in\mathcal{R}_p(G)}EG\times_G G/P}
			\arrow[2]{e,t}{\coprod B(\mu_P\times \nu_P)_p^\wedge}
			\arrow[2]{s,l}{\subseteq}
			\arrow{ese,t,..}{\coprod g_P}
		\node{}
		\node{(BU(d)\times BU(d'))_p^\wedge}
			\arrow{s,r}{\cong}
	\\
		\node{}
		\node{}
		\node{(EU(d+d')/U(d)\times U(d'))_p^\wedge}
			\arrow{s,r}{q}
	\\
		\node{\operatorname*{hocolim}_{G/P\in\mathcal{R}_p(G)} EG\times_G G/P}
			\arrow{e,t}{\varepsilon^p_G}
			\arrow{ene,t,..}{g}
		\node{BG}
			\arrow{e,t}{f}
		\node{BU(d+d')_p^\wedge}
	\end{diagram}
\end{equation} 
where $q$ is induced by the identity map on $EU(d+d')$, and $\varepsilon^p_G$ is a mod $p$-equivalence from Theorem \ref{t:JMODecomp}. The diagram of solid arrows commutes up to homotopy. Since $q$ is a fibration, we can choose maps
\[
	g_P:EG\times_G G/P\to (EU(d+d')/U(d)\times U(d'))_p^\wedge 
\]
 such that the upper triangle commutes up to homotopy, and the remaining part of the diagram, i.e.\ 
\begin{equation}\label{e:LiftingGStrict}
\begin{diagram}
	\node{\coprod_{G/P\in\mathcal{R}_p(G)}EG\times_G G/P}
		\arrow{e,t}{\coprod g_P}
		\arrow{s,l}{\subseteq}
	\node{(EU(d+d')/U(d)\times U(d'))_p^\wedge}
		\arrow{s,r}{q}
\\
	\node{\operatorname*{hocolim}_{G/P\in\mathcal{R}_p(G)} EG\times_G G/P}
			\arrow{e,t}{f\circ \varepsilon^p_G}
			\arrow{ne,t,..}{g}
	\node{BU(d+d')_p^\wedge}
\end{diagram}
\end{equation}
commutes strictly.

\begin{prp}\label{p:MTPLift}
	If the lifting extension problem (\ref{e:LiftingGStrict}) has a solution, then $\mu$ and $\nu$ are $p$-homotopy characters.
\end{prp}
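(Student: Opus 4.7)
The plan is to apply \ref{p:Sketch}: from a solution $g:\hocolim_{G/P\in\mathcal{R}_p(G)} EG\times_G G/P\to (EU(d+d')/U(d)\times U(d'))_p^\wedge$ of (\ref{e:LiftingGStrict}) I will manufacture the map $\tilde g:BG\to (BU(d)\times BU(d'))_p^\wedge$ required there. Since the target is $p$-complete, $g$ factors through the completion $(\hocolim)_p^\wedge$; composing with a homotopy inverse of the equivalence $(\varepsilon^p_G)_p^\wedge$ furnished by Theorem \ref{t:JMODecomp}, then with the completion map $BG\to BG_p^\wedge$, and finally identifying $(EU(d+d')/U(d)\times U(d'))_p^\wedge$ with $(BU(d)\times BU(d'))_p^\wedge$ (so that $q$ becomes $\oplus$), produces the desired $\tilde g$.

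It then remains to verify the two homotopy commutativities demanded by \ref{p:Sketch}. The relation $\oplus\circ\tilde g\simeq f$ follows from the strict commutativity of the square in (\ref{e:LiftingGStrict}) together with naturality of $p$-completion, using that the $p$-complete target of $f$ forces $f$ to factor canonically through $BG_p^\wedge$. For $\tilde g\circ B(T\subseteq G)\simeq (B\mu_T\times B\nu_T)_p^\wedge$ I will fix a $p$-stubborn subgroup $P\subseteq G$ containing $T$; the maximal $p$-toral subgroup $N_p^G$ serves, since its identity component is $T$ and its normalizer quotient $N_G(N_p^G)/N_p^G$ has order prime to $p$ (otherwise $N_p^G$ would admit an enlargement to a strictly bigger $p$-toral subgroup, contradicting maximality), hence contains no non-trivial normal $p$-subgroup. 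Homotopy commutativity of the upper triangle in (\ref{e:LiftingGFull}), combined with the strict commutativity of (\ref{e:LiftingGStrict}), yields $g|_{BP}\simeq B(\mu_P\times\nu_P)_p^\wedge$ after identification. Restricting along $BT\to BP$ and invoking $(\mu_P\times\nu_P)|_T=\mu_T\times\nu_T$ then produces the required homotopy on $BT$, and \ref{p:Sketch} delivers the conclusion.

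The step I expect to be the main obstacle is the careful bookkeeping needed to transport homotopies across the chain of identifications — from $g$ on $\hocolim$ to $\tilde g$ on $BG$ via the $p$-completed JMO equivalence, and from $BP$ or $BT$ into a $p$-complete target via their own completions. Theorem \ref{t:DZN} underpins this passage, since it ensures that homotopy classes of maps from $BP$ (and hence $BT$) into $p$-complete spaces are controlled by honest representations, so any homotopy obtained only after $p$-completion of the source can in fact be realized at the level of the original classifying space. Once this bookkeeping is in place, the two verifications above go through cleanly and \ref{p:Sketch} applies to give $\mu$ and $\nu$ as $p$-homotopy characters.
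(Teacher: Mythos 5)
Your argument is essentially the paper's own proof: you transport the lifting extension $g$ across the mod-$p$ equivalence $\varepsilon^p_G$ of Theorem \ref{t:JMODecomp} (using $p$-completeness of the target) to obtain a map on $BG$, identify the target with $(BU(d)\times BU(d'))_p^\wedge$, and check the hypotheses of \ref{p:Sketch} by restricting to a $p$-stubborn subgroup containing $T$. Your extra observations (that $N_p^G$ is such a subgroup, and the bookkeeping via \ref{t:DZN}) are correct but only spell out details the paper leaves implicit.
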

\begin{proof}
	Let $g$ be a lifting extension. Since $\varepsilon^p_G$ is a $p$-equivalence, there exists a unique up to homotopy map $f'$ such that the composition
	\[	
		\operatorname*{hocolim}_{G/P\in\mathcal{R}_p(G)} EG\times_G G/P\xrightarrow{\varepsilon^p_G} BG \xrightarrow{f'}(EU(d+d')/U(d)\times U(d'))_p^\wedge 
	\]
	is homotopic to $g$. The composition
	\[
		\tilde{f}:BG\xrightarrow{f'}(EU(d+d')/U(d)\times U(d'))_p^\wedge \xrightarrow{\simeq} (BU(d)\times BU(d'))_p^\wedge
	\]
	 fits into the diagram (\ref{p:Sketch}) since $T$ is contained in some $p$-stubborn subgroup. Thus the conclusion follows from \ref{p:Sketch}.
\end{proof}

\subsection*{Obstruction theory}

In order to construct a lifting extension of the diagram (\ref{e:LiftingGStrict}) we will apply results obtained in Section \ref{s:Obstr}. Let us sketch the argument we use here; the formal proof is given in Sections \ref{s:HomotopyActions} and \ref{s:Fibers}. Let $\Fib_{\mu,\nu}^p:\cR_p(G)\to \Sp$ be the functor defined in \ref{e:DefFib} which is associated to this lifting extension problem. We need to check that the assumptions which allow to construct this functor are satisfied but it will become clear when we calculate the homotopy type of the spaces $\Fib_{\mu,\nu}^p(G/P)$. By Theorem \ref{t:Obstr}, the problem (\ref{e:LiftingGStrict}) has a solution if 
\[
	H^{i+1}(\cR_p(G);\pi_i\Fib^p_{\mu,\nu})=0
\]
for all $i\geq 1$. By \ref{p:LambdaCatVan} it is sufficient to prove that
\[
	\Lambda^{i+1}(N_G(P)/P; \pi_{i}\Fib^p_{\mu,\nu}(G/P))=0
\]
for $G/P\in\mathcal{R}_p(G)$, $i\geq 1$. This makes important to describe the spaces  $\Fib_{\mu,\nu}^p(G/P)$ and the action of the groups $N_G(P)/P$ on them. For a $p$-stubborn subgroup $P\subseteq G$ the space $\Fib_{\mu,\nu}^p(G/P)$ is the homotopy fiber of the map 
\[
	\map(EG\times G/P,(BU(d)\times BU(d'))_p^\wedge)_{B\mu_P\times B\nu_P}\to \map(EG\times G/P,BU(d+d')_p^\wedge)_{B(\mu_P\oplus\nu_P)}
\]
induced by the inclusion $U(d)\times U(d')\subseteq U(d+d')$. By Dwyer-Zabrodsky-Notbohm Theorem (\ref{t:DZN}) this map is homotopy equivalent to
\[
	BC_{U(d)\times U(d')}(\mu_P(P))_p^\wedge\times BC_{U(d)\times U(d')}(\nu_P(P))_p^\wedge \to BC_{U(d+d')}((\mu_P\times\nu_P)(P))_p^\wedge,
\]
which is by Schur's lemma equivalent to the map
\[
	\prod_{\varrho\in\IR(P)} B(U(c_{\mu_P}^\varrho)\times U(c_{\nu_P}^\varrho))_p^\wedge \to \prod_{\varrho\in\IR(P)}BU(c_{\mu_P}^\varrho+c_{\nu_P}^\varrho)_p^\wedge,
\]
induced again by inclusions $U(c_{\mu_P}^\varrho)\times U(c_{\nu_P}^\varrho)\subseteq U(c_{\mu_P}^\varrho+c_{\nu_P}^\varrho)$.
Thus there is a homotopy equivalence
\begin{equation}\label{e:FibGEq}
	\Fib_{\mu,\nu}^p(G/P)\simeq\prod_{\varrho\in\IR(P)} \left( U(c_{\mu_P}^\varrho+c_{\nu_P}^\varrho)/(U(c_{\mu_P}^\varrho)\times U(c_{\nu_P}^\varrho)\right)_p^\wedge.
\end{equation}
To proceed with calculations we need to describe also the action of the group $\Aut_{\mathcal{R}_p(G/P)}(G/P)=N_G(P)/P$ on $\Fib_{\mu,\nu}^p(G/P)$. There is an $N_G(P)/P$-action on the right-hand side of the equivalence (\ref{e:FibGEq}) induced by the action on $\IR(P)$. However, it is far from clear that these actions coincide.

For a $p$-stubborn group $P\subseteq G$ define a left $N_G(P)/P$-space
\begin{equation}\label{e:DefFrakG}
	\fG_{\mu_P,\nu_P}:=\prod_{\varrho\in\IR(P)}\left( U(c_{\mu_P}^\varrho+c_{\nu_P}^\varrho)/(U(c_{\mu_P}^\varrho)\times U(c_{\nu_P}^\varrho)\right)_p^\wedge,
\end{equation}
where $N_G(P)/P$ acts by permuting factors, i.e.\ $\gamma\left((x_\varrho)_{\varrho\in\IR(P)}\right)=(x_{\gamma^*\varrho})_{\varrho\in\IR(P)}$ for $\gamma\in N_G(P)/P$. We say that $\Gamma$-spaces $X$ and $Y$ are homotopy $\Gamma$-equivalent (cf.\ \ref{d:hNActions}) if there exists a map $f:X\to Y$ such that the diagram
\begin{equation}
	\begin{diagram}
		\node{X}
			\arrow{e,t}{f}
			\arrow{s,l}{\gamma}
		\node{Y}
			\arrow{s,r}{\gamma}
	\\
		\node{X}
			\arrow{e,t}{f}
		\node{Y}
	\end{diagram}
\end{equation}
commutes up to homotopy for every $\gamma\in \Gamma$.

\begin{thm}\label{t:hNEq}
	For every $p$-stubborn subgroup $P\subseteq G$ the $N_G(P)/P$-spaces $\fG_{\mu_P,\nu_P}$ and $\Fib_{\mu,\nu}^p(G/P)$ are homotopy $N_G(P)/P$-equivalent.
\end{thm}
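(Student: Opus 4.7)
My plan is to make the sketch following equivalence (\ref{e:FibGEq}) into a proof by taking seriously the dependence of the Dwyer--Zabrodsky--Notbohm equivalence on a choice of intertwiner, and tracking how a lift $n\in N_G(P)$ of $\gamma\in N_G(P)/P$ propagates through those choices. By Theorem \ref{t:DZN}, the $p$-completion of the fibration whose homotopy fiber defines $\Fib_{\mu,\nu}^p(G/P)$ is equivalent to
\[
BC_{U(d)}(\mu_P(P))_p^\wedge\times BC_{U(d')}(\nu_P(P))_p^\wedge \to BC_{U(d+d')}((\mu_P\oplus\nu_P)(P))_p^\wedge,
\]
and Schur's lemma, applied to the isotypic decompositions of $\C^d$, $\C^{d'}$ and $\C^{d+d'}$, factors this map as a product over $\varrho\in\IR(P)$ of the inclusions $B(U(c^\varrho_{\mu_P})\times U(c^\varrho_{\nu_P}))_p^\wedge\hookrightarrow BU(c^\varrho_{\mu_P}+c^\varrho_{\nu_P})_p^\wedge$. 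Its fiber is exactly $\fG_{\mu_P,\nu_P}$, so we obtain a definite homotopy equivalence $\phi:\fG_{\mu_P,\nu_P}\xrightarrow{\simeq}\Fib_{\mu,\nu}^p(G/P)$.

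Next I unwind the $\gamma$-action. Because $\mu+\nu$ is a homotopy character and $n$ normalizes $P$, the representations $\mu_P$ and $\mu_P\circ c_n$ are conjugate in $U(d)$; choose intertwiners $u\in U(d)$ with $u\mu_P(p)u^{-1}=\mu_P(c_n(p))$ and $u'\in U(d')$ analogously, and take $u\oplus u'$ as the intertwiner for $\mu_P\oplus\nu_P$. Under the DZN identification, the $\gamma$-action on each centralizer becomes conjugation by the corresponding intertwiner, well-defined because these intertwiners normalize the respective images of $P$. By Schur, $u=\bigoplus_\varrho \phi_\varrho\otimes\psi_\varrho$ where $\phi_\varrho:V_\varrho\to V_{\gamma^*\varrho}$ is an intertwiner of irreducibles (unique up to scalar) and $\psi_\varrho:\C^{c^\varrho_{\mu_P}}\to\C^{c^{\gamma^*\varrho}_{\mu_P}}$ is an arbitrary isomorphism (the equality of dimensions being the $\mu$-invariance of multiplicities); similarly $u'=\bigoplus_\varrho\phi'_\varrho\otimes\psi'_\varrho$. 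I would force $\phi'_\varrho=\phi_\varrho$ in each block, so that the Schur decomposition of $u\oplus u'$ in $U(d+d')$ is block-diagonal of the form $\bigoplus_\varrho \phi_\varrho\otimes(\psi_\varrho\oplus\psi'_\varrho)$. A direct computation then shows that conjugation by $u\oplus u'$ acts on the total centralizer $\prod_\varrho U(c^\varrho_{\mu_P}+c^\varrho_{\nu_P})$ as the permutation $\varrho\mapsto\gamma^*\varrho$ post-composed, factor by factor, with inner conjugation by $\psi_\varrho\oplus\psi'_\varrho\in U(c^\varrho_{\mu_P})\times U(c^\varrho_{\nu_P})$; the analogous description with the same blocks holds on the $U(d)\times U(d')$ side.

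The main obstacle, and the crux of the argument, is to absorb the residual inner-conjugation terms $\psi_\varrho\oplus\psi'_\varrho$, which are absent from the permutation-only definition of $\fG$. The induced self-map of each factor $U(c^\varrho_{\mu_P}+c^\varrho_{\nu_P})/(U(c^\varrho_{\mu_P})\times U(c^\varrho_{\nu_P}))$ is conjugation by the element $\psi_\varrho\oplus\psi'_\varrho$ of $U(c^\varrho_{\mu_P})\times U(c^\varrho_{\nu_P})$, which descends on the coset space to \emph{left translation} by that element; any such translation by an element of a path-connected Lie group is homotopic to the identity via a path to $1$, and this homotopy is preserved by $p$-completion. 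Assembling these factor-wise homotopies produces the required homotopy $\phi\circ\gamma\simeq\gamma\circ\phi$ for every individual $\gamma$, verifying the condition of Definition \ref{d:hNActions}; the ambiguity in the choice of lift $n$ and intertwiners $u,u'$ is harmless because different choices differ by elements of the centralizers themselves, whose action on the classifying spaces is homotopically trivial.
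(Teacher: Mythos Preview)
Your computation on the centralizer side is correct and matches the content of the paper's Section~\ref{s:HomotopyActions}: the Dwyer--Zabrodsky--Notbohm map is h-$N$-equivariant on total spaces (this is Proposition~\ref{p:DZNFunc}), and under the Schur identification the conjugation by $u\oplus u'$ decomposes as permutation followed by inner conjugation by $\psi_\varrho\oplus\psi'_\varrho$, which on the Grassmannian factor is left translation by an element of a connected group and hence null-homotopic. This is essentially the ``commutes up to conjugacy'' argument in Proposition~\ref{p:JIso}, carried out directly on the fiber rather than on classifying spaces.

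The gap is in the last step, where you pass from this computation to the conclusion $\phi\circ\gamma\simeq\gamma\circ\phi$. What you have established is that the $\gamma$-action on the \emph{centralizer} fiber, defined via your specific intertwiner, is homotopic to the permutation. But $\Fib_{\mu,\nu}^p(G/P)$ is by definition the fiber of the \emph{mapping-space} fibration $m^G_{\mu_P,\nu_P}$, and its $N$-action comes from the strict $N$-action on $EG\times_G G/P$ followed by fiber transport. The DZN equivalence intertwines the two $\gamma$-actions only \emph{up to homotopy on total spaces}; such a homotopy need not be fiber-preserving, so it does not automatically restrict to a homotopy between the induced maps on fibers. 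This is precisely the obstacle the paper isolates in Section~\ref{s:Fibers}: Lemma~\ref{l:FibAct} and Proposition~\ref{p:hNActCrit} show that an h-$N$-equivalence of fibrations descends to an h-$N$-equivalence of fibers \emph{provided} the map
\[
\pi_1\bigl(\map(\mathfrak{J}_\alpha\times\mathfrak{J}_\beta,\mathfrak{J}_\alpha\times\mathfrak{J}_\beta)_{\id}\bigr)\longrightarrow \pi_1\bigl(\map(\mathfrak{J}_\alpha\times\mathfrak{J}_\beta,\mathfrak{J}_{\alpha\oplus\beta})\bigr)
\]
is surjective. Verifying this is a separate, non-formal step (Propositions~\ref{p:CentCrit}--\ref{p:CentrOfJJ}): one must check that for every $p$-stubborn subgroup $Q\subseteq J_\alpha\times J_\beta$ the centralizers of $Q$ in $J_\alpha\times J_\beta$ and in $J_{\alpha\oplus\beta}$ coincide, which uses the classification of $p$-stubborn subgroups of products of unitary groups. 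Your argument does not address this hypothesis, and without it the final homotopy $\phi\circ\gamma\simeq\gamma\circ\phi$ is not justified.
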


The proof of this Theorem is postponed to Sections \ref{s:HomotopyActions} and \ref{s:Fibers}. This result allows us to prove the following

\begin{thm}\label{t:Crit}
	If for every $p$-stubborn subgroup $P\subseteq G$ and every $i\geq 2$ holds 
	\[
		\Lambda^{i+1}(N_G(P)/P;\pi_i(\mathfrak{G}_{\mu_P,\nu_P}))=0,
	\]
	then $\mu$ and $\nu$ are $p$-homotopy characters.
\end{thm}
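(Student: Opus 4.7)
The plan is to apply the obstruction theory of Section~\ref{s:Obstr} to the lifting extension problem (\ref{e:LiftingGStrict}) and then invoke Proposition~\ref{p:MTPLift}. The obstructions will be dispatched by combining Proposition~\ref{p:LambdaCatVan} with Theorem~\ref{t:hNEq}, leaving only the standing hypothesis together with a trivial observation about $\pi_1$ of a complex Grassmannian.

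First I would apply Theorem~\ref{t:Obstr} to diagram (\ref{e:LiftingGStrict}), using the fibration $q$ and the partial lifting $\coprod g_P$ that is already chosen in (\ref{e:LiftingGFull}): an extension $g$ exists provided $H^{i+1}(\cR_p(G);\pi_i\Fib^p_{\mu,\nu})=0$ for every $i\geq 1$. By Proposition~\ref{p:LambdaCatVan} applied to the $\Z_p^\wedge[\cR_p(G)]$-module $\pi_i\Fib^p_{\mu,\nu}$, this is reduced to $\Lambda^{i+1}(N_G(P)/P;\pi_i\Fib^p_{\mu,\nu}(G/P))=0$ for every $p$-stubborn $P\subseteq G$ and $i\geq 1$. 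Theorem~\ref{t:hNEq} provides a homotopy $N_G(P)/P$-equivalence $\Fib^p_{\mu,\nu}(G/P)\simeq\fG_{\mu_P,\nu_P}$, which induces an isomorphism of $\Z_p^\wedge[N_G(P)/P]$-modules on every homotopy group, so this further reduces to showing $\Lambda^{i+1}(N_G(P)/P;\pi_i\fG_{\mu_P,\nu_P})=0$ for all $i\geq 1$. For $i\geq 2$ this is exactly the standing hypothesis; for $i=1$ it is automatic, since by (\ref{e:DefFrakG}) the space $\fG_{\mu_P,\nu_P}$ is (up to homotopy, a finite) product of $p$-completions of complex Grassmannians $U(a+b)/(U(a)\times U(b))$, each simply connected and nilpotent, hence $p$-complete factors remain simply connected and $\pi_1\fG_{\mu_P,\nu_P}=0$. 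The resulting lifting extension $g$ of (\ref{e:LiftingGStrict}), together with Proposition~\ref{p:MTPLift}, identifies $\mu$ and $\nu$ as $p$-homotopy characters, completing the proof.

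The main obstacle is not in any of the steps above, which are essentially assembly-line, but in cashing in the reference to Theorem~\ref{t:Obstr}: one must verify in advance that the functor $\Fib^p_{\mu,\nu}$ is well-defined on $\cR_p(G)$ and that the family $\{g_P\}$ is homotopy-compatible in the strong sense the obstruction theory requires. This is precisely the content of Sections~\ref{s:HomotopyActions} and~\ref{s:Fibers}, and is subsumed by Theorem~\ref{t:hNEq}, which identifies the intrinsic categorical action of $N_G(P)/P=\Aut_{\cR_p(G)}(G/P)$ on $\Fib^p_{\mu,\nu}(G/P)$ with the explicit permutation action on $\fG_{\mu_P,\nu_P}$ over $\IR(P)$. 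Once Theorem~\ref{t:hNEq} is granted, the role of the $i=1$ simple-connectivity observation is merely to allow the hypothesis to begin at $\Lambda^3$, which is what matches the statement of Theorem~\ref{t:MainCrit}.
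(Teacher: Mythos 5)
Your proposal is correct and takes essentially the same route as the paper: apply Theorem \ref{t:Obstr} to (\ref{e:LiftingGStrict}), reduce via \ref{p:LambdaCatVan} to the $\Lambda$-groups, transfer them along the homotopy $N_G(P)/P$-equivalence of \ref{t:hNEq}, settle $i=1$ by simple connectivity of $\fG_{\mu_P,\nu_P}$, and conclude with \ref{p:MTPLift}. The only (cosmetic) difference is that the paper checks the hypothesis of \ref{t:Obstr} on the base, namely simple connectivity of $\map(EG\times_G G/P, BU(d+d')_p^\wedge)_{B(\mu_P\oplus\nu_P)}$, directly from Schur's lemma (connectedness of the centralizer) together with Theorem \ref{t:DZN}, rather than folding that verification into the citation of \ref{t:hNEq}.
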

\begin{proof}
	We will apply Theorem \ref{t:Obstr} to prove the existence of the lifting extension of the diagram (\ref{e:LiftingGStrict}). First, we need to check that assumptions of \ref{t:Obstr} are satisfied. For every $p$-stubborn subgroup $P\subseteq G$ the group $C_{U(d+d')}((\mu_P\times \nu_P)(P))$ is connected (by Schur's lemma), and  there is a homotopy equivalence
\[
	\map(EG\times_G G/P, BU(d+d')_p^\wedge)_{B(\mu_P\times\nu_P)}\simeq BC_{U(d+d')}((\mu_P\times \nu_P)(P))_p^\wedge
\]
(by  Dwyer-Zabrodsky-Notbohm Theorem \ref{t:DZN}). Thus, $\map(EG\times_G G/P, BU(d+d')_p^\wedge)_{B(\mu_P\times\nu_P)}$ is simply connected. Furthermore, the space $\Fib_{\mu,\nu}^p(G/P)$  is simply connected since it is homotopy equivalent to $\mathfrak{G}_{\mu_P,\nu_P}$ (by \ref{t:hNEq}). Again by \ref{t:hNEq} and the assumption 
	\[
		\Lambda^{i+1}(N_G(P)/P; \Fib_{\mu,\nu}^p(G/P))=\Lambda^{i+1}(N_G(P)/P; \pi_i(\mathfrak{G}_{\mu_P,\nu_P}))=0
	\]
	for $i\geq 2$ and $G/P\in\cR_p(G)$. The space $\fG_{\mu_P,\nu_P}$ is simply connected, this equation is valid also for $i=1$. Thus, by \ref{p:LambdaCatVan}
	\[
		H^{i+1}(\cR_p(G);\Fib_{\mu,\nu}^p)=0
	\]
	for $i\geq 1$. Theorem \ref{t:Obstr} implies that there exists a lifting extension in the diagram (\ref{e:LiftingGStrict}). The conclusion follows from \ref{p:MTPLift}.
\end{proof}

\subsection*{Calculation of groups $\Lambda^*$}

\begin{prp}\label{p:LastLemma}
	Let $P\subseteq G$ be a $p$-stubborn subgroup. Assume that for every $N_G(P)/P$-orbit $X\subseteq \IR(P,\mu_P)\cap \IR(P,\nu_P)$ holds 
	\[
		\Lambda^{i+1}(N_G(P)/P; \Z_p^\wedge[X])=0
	\]
	for $i\geq 2$. Then $\Lambda^{i+1}(N_G(P)/P;\pi_i(\fG_{\mu_P,\nu_P}))=0$ for $i\geq 2$.
\end{prp}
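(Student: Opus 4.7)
The plan is to reduce the statement to Proposition \ref{p:LambdaExtLemma} by identifying $\pi_i(\fG_{\mu_P,\nu_P})$, as a $\Z_p^\wedge[N_G(P)/P]$-module, with a direct sum of permutation modules $\Z_p^\wedge[X]$ tensored with trivial modules.

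First I would isolate the non-trivial factors in the product \ref{e:DefFrakG}. The factor at $\varrho\in\IR(P)$ is the $p$-completion of a complex Grassmannian, which collapses to a point precisely when $c_{\mu_P}^\varrho=0$ or $c_{\nu_P}^\varrho=0$; so only indices $\varrho\in Y:=\IR(P,\mu_P)\cap\IR(P,\nu_P)$ contribute. Since $\mu,\nu$ are $G$-class functions and $P\triangleleft N_G(P)$, the restrictions $\mu_P,\nu_P$ are $N_G(P)/P$-invariant, hence the multiplicities $c_{\mu_P}^\varrho,c_{\nu_P}^\varrho$ are constant on every $N_G(P)/P$-orbit of $\IR(P)$ and $Y$ is a sub-$N_G(P)/P$-set. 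Writing $Y=\bigsqcup_X X$ as a union of finitely many orbits, letting $c_\mu^X,c_\nu^X$ denote the common multiplicities on $X$, and setting
\[
\Gr_X:=\bigl(U(c_\mu^X+c_\nu^X)/(U(c_\mu^X)\times U(c_\nu^X))\bigr)_p^\wedge,
\]
one obtains an $N_G(P)/P$-equivariant identification $\fG_{\mu_P,\nu_P}\cong\prod_X\prod_{\varrho\in X}\Gr_X$ with the group permuting the inner copies according to its action on $X$.

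Next I would pass to homotopy groups. Each $\Gr_X$ is the $p$-completion of a simply connected closed manifold of finite type, so by Serre $\pi_i(\Gr_X)$ is a finitely generated $\Z_p^\wedge$-module. Because a finite product of simply connected spaces has $\pi_i$ equal to the direct sum of the $\pi_i$'s, and because the $N_G(P)/P$-action just permutes the inner factors within each orbit, one gets an isomorphism
\[
\pi_i(\fG_{\mu_P,\nu_P})\;\cong\;\bigoplus_{X}\pi_i(\Gr_X)\otimes_{\Z_p^\wedge}\Z_p^\wedge[X]
\]
of $\Z_p^\wedge[N_G(P)/P]$-modules, with $N_G(P)/P$ acting trivially on $\pi_i(\Gr_X)$ and by the permutation representation on $\Z_p^\wedge[X]$.

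It then remains to apply Proposition \ref{p:LambdaExtLemma} with $r=3$ and $M=\pi_i(\Gr_X)$: the assumption of the present proposition yields $\Lambda^j(N_G(P)/P;\Z_p^\wedge[X])=0$ for every $j\geq 3$ and every orbit $X\subseteq Y$, so the lemma gives $\Lambda^j(N_G(P)/P;\pi_i(\Gr_X)\otimes\Z_p^\wedge[X])=0$ for $j\geq 3$. Additivity of $\Lambda^*$ in the coefficient module, summed over the orbits $X$, delivers the claim in the range $i+1=j\geq 3$, i.e. $i\geq 2$. The only genuine obstacle is the bookkeeping in the middle step: one must check that the $N_G(P)/P$-action on $\pi_i$ of the product is \emph{literally} the permutation action on the displayed decomposition, which is guaranteed by the $N_G(P)/P$-invariance of $\mu_P$ and $\nu_P$ established in the first step; everything else is a direct appeal to previously stated results.
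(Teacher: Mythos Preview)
Your proposal is correct and follows essentially the same route as the paper: isolate the nontrivial factors indexed by $\IR(P,\mu_P)\cap\IR(P,\nu_P)$, decompose into $N_G(P)/P$-orbits on which the multiplicities are constant, identify $\pi_i(\fG_{\mu_P,\nu_P})$ as a direct sum of modules of the form $\pi_i(\Gr_X)\otimes\Z_p^\wedge[X]$, and apply Proposition~\ref{p:LambdaExtLemma} together with additivity of $\Lambda^*$. If anything, you supply a bit more justification than the paper (constancy of multiplicities on orbits, finite generation of $\pi_i(\Gr_X)$ over $\Z_p^\wedge$), but the argument is the same.
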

\begin{proof}
	Denote $N=N_G(P)/P$,  $\fI:=\IR(P,\mu_P)\cap \IR(P,\nu_P)$ and
	\[
		Y_\varrho:=\left(U(c_{\mu_P}^\varrho+c_{\mu_P}^\varrho)/(U(c_{\mu_P}^\varrho)\times U(c_{\nu_P}^\varrho))\right)_p^\wedge
	\]
	Note that for $\varrho\in \IR(P)\setminus \fI$ either $c^\varrho_{\mu_P}=0$ or $c_{\nu_P}^\varrho=0$, thus
	\[
		\fG_{\mu_p,\nu_p}=\prod_{\varrho\in\fI}Y_\varrho.
	\]
	Let $\fI=X_1\cup\dots\cup X_k$ be a presentation as a sum of disjoint orbits and choose representatives $\varrho_j\in X_j$. Obviously $Y_\varrho=Y_{\varrho'}$ if $\varrho$ and $\varrho'$ lie in the same orbit. We have a sequence of isomorphisms of $\Z_p^\wedge[N_G(P)/P]$-modules:
	\[
		\pi_i\left(\fG_{\mu_p,\nu_p}\right)=\pi_i\left(\prod_{\varrho\in \fI} Y_\varrho\right) =\prod_{j=1}^k \pi_i\left(\prod_{\varrho\in X_j} Y_\varrho\right)
		= \prod_{j=1}^k \pi_i\left( Y_{\varrho_j}^{X_j}\right)
		= \prod_{j=1}^k \pi_i\left(Y_{\varrho_j}\right)\otimes \Z_p^\wedge[X_j].
	\]
	Since $\pi_i\left(Y_{\varrho_j}\right)$ is a finitely generated $\Z_p^\wedge$-module, by \ref{p:LambdaExtLemma} we have
	\[
		\Lambda^{i}(N;\pi_i(\fG_{\mu_P,\nu_P}))=\Lambda^{i}(N;\prod_{j=1}^k \Z_p^\wedge[X_j]\otimes \pi_i\left(Y_{\varrho_j}\right) ))
		=\bigoplus_{j=1}^k \Lambda^{i}(N; \Z_p^\wedge[X_j]\otimes \pi_i\left(Y_{\varrho_j}\right) ))=0.\qedhere
	\]
\end{proof}

\begin{proof}[Proof of {\ref{t:MainCrit}}]
	This follows immediately from \ref{t:Crit} and \ref{p:LastLemma}.
\end{proof}

\subsection*{Splitting of identity representation and Adams operations}

Finally, we will prove that Theorem \ref{t:MainCrit} applies in the case when $\nu=\psi^k$ is the character of the unstable Adams operation of index $k$. The special case is the character of the identity representation $\iota=\psi^1$.  Its proof depends on the following two propositions. For a finite set $X$ let $\Sigma_X$ denote the group of permutations of $X$.

\begin{prp}\label{p:EpiSigma}
	Let $P$ be a $p$-stubborn subgroup of $U(n)$. For every $N_{U(n)}(P)/P$-orbit $X\subseteq \IR(P,\psi^k_P)$ the map $N_{U(n)}(P)/P\to \Sigma_X$ induced by this action is an epimorphism.
\end{prp}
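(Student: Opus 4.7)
The plan is to use the block-diagonal structure of $p$-stubborn subgroups of $U(n)$ given by Oliver's classification. After conjugation, $P$ admits a canonical decomposition $P\subseteq U(n_1)^{k_1}\times\cdots\times U(n_r)^{k_r}\subseteq U(n)$ coming from the decomposition of the standard representation $\iota_P$ into a direct sum of blocks built from elementary $p$-stubborn subgroups, and $N_{U(n)}(P)$ contains a block-permutation subgroup that acts on $P$ as $\Sigma_{k_1}\times\cdots\times\Sigma_{k_r}$ by permuting identical blocks. The strategy is to show that this block-permutation subgroup alone already surjects onto $\Sigma_X$ for every orbit $X$.

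I would first describe the irreducible constituents of $\psi^k_P$ via the character identity $\chi_{\psi^k_P}(p)=\chi_\iota(p^k)$ together with the isotypic decomposition of $\iota_P$, so that each $\varrho\in\IR(P,\psi^k_P)$ can be traced back to a specific block-type of $P$ and its multiplicity $c^\varrho_{\psi^k_P}$ read off from that block's data. Next, I would argue that any two irreducibles lying in the same $N_{U(n)}(P)/P$-orbit are already conjugate by a block-permutation element: any $g\in N_{U(n)}(P)$ must preserve the block decomposition up to permutation of isomorphic blocks, because the blocks are distinguished by the $P$-isotypic decomposition of $\iota_P$ and $U(n)$-conjugation preserves the multiplicities of isotypic components. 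Since the block-permutation subgroup is a product of symmetric groups, it acts as the full symmetric group on each of its orbits in $\IR(P,\psi^k_P)$, so on the orbit $X$, and therefore the original $N_{U(n)}(P)/P$-action on $X$ is also surjective onto $\Sigma_X$.

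The hard part will be the rigorous verification that the $N_{U(n)}(P)/P$-action on $\IR(P,\psi^k_P)$ reduces to the block-permutation action, i.e.\ that no ``exotic'' automorphism of $P$ arising from $U(n)$-conjugation can mix irreducibles across different block-types. This should follow from the explicit description of normalizers of elementary $p$-stubborn subgroups in Oliver's work, combined with a multiplicity count: two irreducibles in the same $N_{U(n)}(P)/P$-orbit must have matching multiplicities in $\iota_P$ (and the same dimension), and this rigidity together with the structure of the blocks confines them to lie in the same block-orbit, reducing the analysis of $\Sigma_X$ to the action of the visible wreath-product factors of $N_{U(n)}(P)/P$.
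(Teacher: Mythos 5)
There is a genuine gap, and it sits exactly where your proposal is vaguest: you never establish how $\psi^k_P$ decomposes into irreducibles, and in particular you never prove that each elementary block contributes exactly \emph{one} irreducible constituent. This is the heart of the matter. Writing $P=P_1^{b_1}\times\dots\times P_j^{b_j}$ as in \ref{p:StubbornSubgroupOfUn}, the argument only works because $\psi^k_{P_i}$ is irreducible for every elementary $p$-stubborn $P_i$; then $\IR(P,\psi^k_P)$ consists of the $b_1+\dots+b_j$ representations $\psi^k_{P_i}\circ pr_{i,l}$, each $N_{U(n)}(P)/P$-orbit is precisely one set $\{\psi^k_{P_i}\circ pr_{i,l}\}_{l=1}^{b_i}$, and surjectivity onto $\Sigma_X\cong\Sigma_{b_i}$ comes from the visible wreath factor in \ref{e:WeylGroupOfGeneralStubborn}. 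If instead some block $P_i$ contributed several distinct constituents $\varrho_1,\dots,\varrho_s$ ($s\geq 2$), your argument breaks in two ways: the full orbit $X$ could be strictly larger than a block-permutation orbit (the local factors $N_{U(p^{n_i})}(P_i)/P_i$ may permute the $\varrho_a$), so surjectivity of the block permutations onto $\Sigma$ of \emph{their} orbit says nothing about $\Sigma_X$; and worse, the image of $N_{U(n)}(P)/P$ in $\Sigma_X$ would then be contained in an imprimitive wreath product $\Sigma_s\wr\Sigma_{b_i}$, which is a proper subgroup of the full symmetric group on $X$ whenever $s,b_i\geq 2$. So the statement is only true \emph{because} of the irreducibility, and no multiplicity count or block-rigidity argument can substitute for it.

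This irreducibility is the actual hard part of the paper's proof, not the claim you flag as hard (that $U(n)$-conjugation cannot mix block-types is essentially contained in Oliver's computation of $N_{U(n)}(P)/P$, which both you and the paper simply quote). The paper proves irreducibility in two steps: for $\Gamma(k)$ it constructs an explicit automorphism $\alpha$ with $\alpha(A^k_i)=(A^k_i)^m$, $\alpha(B^k_i)=B^k_i$, $\alpha(t)=t^m$ on the centre, checks via the relations and a trace computation that $\iota_{\Gamma(k)}\circ\alpha$ has the same character as the Adams character, and concludes from irreducibility of $\iota_{\Gamma(k)}$ (\ref{p:IrrIsIrr}); then it extends to wreath products $\Gamma(k;a_1,\dots,a_r)$ by induction, restricting to the base subgroup, noting that the $C_p^{a_j}$-quotient permutes the summands transitively, and applying Frobenius reciprocity (\ref{p:IdentityIsIrreducible}). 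Your character identity $\chi_{\psi^k_P}(g)=\chi_{\iota}(g^k)$ is the right starting point, but by itself it does not ``trace each constituent back to a block with its multiplicity''; you would still have to carry out something like the automorphism trick (using $(k,n!)=1$, hence $p\nmid k$) to see that the restriction to each elementary block is a single irreducible. Without that lemma your reduction to the block-permutation action, and hence the surjectivity onto $\Sigma_X$, does not go through.
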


\begin{prp}\label{p:LambdaSigma}
	If $X$ is a finite set, then $\Lambda^i(\Sigma_X;\Z_p^\wedge[X])=0$ for $i\geq 2$.
\end{prp}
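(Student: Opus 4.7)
The approach is to exploit the transitivity of the $\Sigma_X$-action on $X$. Fix a basepoint $x_0\in X$ and let $H=\Sigma_{X\setminus\{x_0\}}\subseteq\Sigma_X$ be its stabilizer. The orbit map $gH\mapsto g\cdot x_0$ is an isomorphism of $\Sigma_X$-sets $\Sigma_X/H\cong X$, which yields an isomorphism of $\Z_p^\wedge[\Sigma_X]$-modules
\[
	\Z_p^\wedge[X]\cong \ind_H^{\Sigma_X}\Z_p^\wedge.
\]

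The plan is then to reduce the computation of $\Lambda^i(\Sigma_X;\ind_H^{\Sigma_X}\Z_p^\wedge)$ to the computation of $\Lambda^i(H;\Z_p^\wedge)$ in positive degrees, and finish by invoking the high-degree vanishing of $\Lambda^i$ on trivial modules. The inclusion $H\hookrightarrow\Sigma_X$ induces a functor $\iota\colon\mathcal{O}_p(H)\to\mathcal{O}_p(\Sigma_X)$, $H/K\mapsto\Sigma_X/K$, and the defining functors $F^H_{\Z_p^\wedge}$ and $F^{\Sigma_X}_{\Z_p^\wedge[X]}$ are both supported at their respective trivial-stabilizer objects. A Leray-type spectral sequence for $\iota$, combined with this concentration, should yield the Shapiro-style isomorphism
\[
	\Lambda^i(\Sigma_X;\Z_p^\wedge[X])\cong \Lambda^i(H;\Z_p^\wedge)\quad\text{for } i\geq 1.
\]
Granted this, the proposition reduces to $\Lambda^i(H;\Z_p^\wedge)=0$ for $i\geq 2$, which is a general statement about $\Lambda^*$ of trivial modules (arising from the fact that the constant functor concentrated at $H/1$ admits a short projective resolution in the category of $\Z_p^\wedge[\mathcal{O}_p(H)]$-modules, cf.\ \cite[Proposition 6.1(i)]{JMO}).

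The main obstacle is the positive-degree Shapiro-type identity. The naive statement breaks in degree zero: one may have $\Lambda^0(\Sigma_X;\ind_H^{\Sigma_X}\Z_p^\wedge)=0$ when $p\mid|\Sigma_X|$ while $\Lambda^0(H;\Z_p^\wedge)=\Z_p^\wedge$ when $p\nmid|H|$, so a genuine spectral sequence analysis, rather than an immediate adjunction argument, is needed for degrees $i\geq 1$. Identifying the $E_2$-page of the Leray spectral sequence for $\iota$ in terms of the $p$-orbit categories of $H$ and $\Sigma_X$, and verifying that it collapses thanks to the concentrated nature of $F^{\Sigma_X}_{\Z_p^\wedge[X]}$, is the technical heart of the argument; once done, the vanishing in degrees $i\geq 2$ is a direct corollary.
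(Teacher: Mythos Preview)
Your proposal is a strategy, not a proof: the central step---the ``Shapiro-style isomorphism'' $\Lambda^i(\Sigma_X;\ind_H^{\Sigma_X}\Z_p^\wedge)\cong\Lambda^i(H;\Z_p^\wedge)$ for $i\geq 1$---is asserted but not established, and you yourself flag it as ``the technical heart of the argument''. No such Shapiro lemma for the functors $\Lambda^*$ is available in \cite{JMO} or \cite{AKO}, and the sketch you give does not produce one. The Leray spectral sequence attached to $\iota\colon\mathcal{O}_p(H)\to\mathcal{O}_p(\Sigma_X)$ converges to $H^*(\mathcal{O}_p(H);-)=\Lambda^*(H;-)$, not to $\Lambda^*(\Sigma_X;-)$, so it computes the wrong side; to run things the other way you would have to identify $F^{\Sigma_X}_{\Z_p^\wedge[X]}$ with a (derived) Kan extension of $F^H_{\Z_p^\wedge}$ along $\iota$, and already at the object $\Sigma_X/1$ these functors take different values ($\Z_p^\wedge[X]$ versus $\Z_p^\wedge$). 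The degree-zero discrepancy you observe is a symptom of this: nothing in your outline explains why the mismatch should be confined to degree $0$. (Your endgame, $\Lambda^i(H;\Z_p^\wedge)=0$ for $i\geq 1$, is correct and follows immediately from Proposition~\ref{p:LambdaProperties}(2), since the kernel of the trivial action is all of $H$; but this is moot without the reduction.)

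The paper's argument is entirely different and avoids any induction/restriction formalism. It invokes a criterion of Aschbacher--Kessar--Oliver (Lemma~\ref{l:AKO}): if $\Lambda^k(\Gamma;M)\neq 0$ for some $k\geq 1$, then $\Gamma$ admits a radical $p$-chain $1=P_0\subsetneq\cdots\subsetneq P_k$ such that $M/pM$ contains a free $\F_p[P_k]$-summand. A combinatorial analysis of radical $p$-subgroups and radical $p$-chains in symmetric groups (Propositions~\ref{p:RankOfRadical} and~\ref{p:ChainSize}, using the Alperin--Fong classification) shows that for $k\geq 2$ one has $|P_k|>|X|$, which is incompatible with an embedding $\F_p[P_k]\hookrightarrow\F_p[X]$ on dimension grounds. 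The single exceptional configuration $(p,|X|,k)=(2,5,2)$ is then ruled out by a short explicit calculation.
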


These propositions are main results of Sections \ref{s:RepresentationsOfStubbornSubgroups} and \ref{s:FunctorsLambdaSigma} respectively.

\begin{thm}\label{t:Adams}
	For every natural number $n$ and $k$ such that $(k,n!)=1$ the character $\psi^k$ of the unstable Adams operation $\Psi^k:BU(n)\rightarrow BU(n)$ has the splitting property.
\end{thm}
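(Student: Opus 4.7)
The plan is to deduce Theorem \ref{t:Adams} from Theorem \ref{t:MainCrit} applied with $G=U(n)$ and $\nu=\psi^k$. Two preliminary remarks make this application legitimate. First, the hypothesis $(k,n!)=1$ guarantees that the unstable Adams operation $\Psi^k:BU(n)\to BU(n)$ exists as an honest map of spaces, so its character $\psi^k$ is a homotopy character and in particular a $\mathcal{P}$-character. Second, for any $\mathcal{P}$-character $\mu$ of $U(n)$ with $\mu+\psi^k$ a homotopy character, \ref{t:MainCrit} asserts that $\mu$ and $\psi^k$ are both homotopy characters provided that
\[
\Lambda^i(N_{U(n)}(P)/P;\Z_p^\wedge[X])=0\qquad\text{for all } i\geq 3
\]
holds for every prime $p$, every $p$-stubborn $P\subseteq U(n)$, and every $N_{U(n)}(P)/P$-orbit $X\subseteq \IR(P,\mu_P)\cap\IR(P,\psi^k_P)$. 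So the task reduces to verifying this vanishing uniformly in $\mu$.

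The key observation is that any such orbit $X$ lies inside $\IR(P,\psi^k_P)$, so the dependence on $\mu$ disappears and Proposition \ref{p:EpiSigma} furnishes an epimorphism $N:=N_{U(n)}(P)/P\twoheadrightarrow \Sigma_X$ describing the action of $N$ on $X$. Setting $H:=\ker(N\to\Sigma_X)$, this $H$ coincides with the kernel of the action of $N$ on the permutation module $M:=\Z_p^\wedge[X]$ appearing in Proposition \ref{p:LambdaProperties}(2). That proposition then splits into two cases: if $p\mid |H|$ then $\Lambda^*(N;M)=0$ outright, and otherwise $\Lambda^*(N;M)\cong\Lambda^*(N/H;M)\cong\Lambda^*(\Sigma_X;\Z_p^\wedge[X])$, which vanishes in degrees $\geq 2$---and a fortiori in degrees $\geq 3$---by Proposition \ref{p:LambdaSigma}. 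Either way the vanishing hypothesis of \ref{t:MainCrit} is met, and the conclusion follows.

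Viewed this way, Theorem \ref{t:Adams} is a short assembly of previously established pieces, and the substantive work is packaged entirely into the two supporting Propositions \ref{p:EpiSigma} and \ref{p:LambdaSigma} whose proofs will occupy later sections. The main obstacles, therefore, are not in the present deduction but in those supporting results: namely (i) classifying $p$-stubborn subgroups of $U(n)$ and tracking the $N_{U(n)}(P)/P$-action on their irreducibles finely enough to extract a full symmetric-group quotient on every orbit inside $\IR(P,\psi^k_P)$, and (ii) computing the higher derived limits $\Lambda^*(\Sigma_X;\Z_p^\wedge[X])$ on the defining permutation module. Once both are available, the argument above completes the proof without further effort.
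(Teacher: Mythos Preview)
Your proof is correct and follows essentially the same route as the paper's own argument: reduce to \ref{t:MainCrit}, note that any relevant orbit $X$ sits in $\IR(P,\psi^k_P)$ so that \ref{p:EpiSigma} gives a surjection onto $\Sigma_X$, then invoke \ref{p:LambdaProperties}(2) and \ref{p:LambdaSigma}.
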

\begin{proof}
	Let $\mu$ be a $\cP$-character of $U(n)$ such that $\mu+\psi^k$ is a homotopy character. Fix a prime $p$, a $p$-stubborn subgroup $P\subseteq U(n)$ and  an $N_{U(n)}(P)/P$-orbit $X\subseteq \IR(P,\psi^k)$. By \ref{p:LambdaProperties}.(2), if $p$ divides the order of $\ker(N_{U(n)}(P)/P\to \Sigma_X)$, then $\Lambda^i(N_{U(n)}(P)/P;\Z_p^\wedge[X])=0$; otherwise
	\[
		\Lambda^i(N_G(P)/P;\Z_p^\wedge[X])=\Lambda^{i}(\Sigma_X; \Z_p^\wedge[X])=0
	\]
	for $i\geq 2$ by \ref{p:EpiSigma} and \ref{p:LambdaSigma}. Now the conclusion follows from \ref{t:MainCrit}.	
\end{proof}

Theorem \ref{t:Main} follows as a special case of the above.

\section{Obstruction theory} 
\label{s:Obstr}

In this section we develop the obstruction theory for lifting maps from homotopy colimits. This is an extension  of results obtained by Wojtkowiak \cite{W}. Let $\C$ be a small category, $F:\C\rightarrow \Sp$ a diagram of spaces and $p:Y\rightarrow Z$ a fibration. Fix a map $f:\hocolim_{\C}\; F\rightarrow Z$ and a family of maps $g_c:F(c)\rightarrow Y$, $c\in\Ob(\C)$ which represents a cocone in the homotopy category, i.e.\ an element
\[
	\{[g_c]\}_{c\in\Ob(\mathcal{C})}\in\lim_{c\in{\mathcal{C}}}\;[F(c),Y].
\]
Assume that the diagram of solid arrows
\begin{equation}\label{e:HLDiag}
	\begin{diagram}
		\node{\coprod_{c\in\C} F(c)}
			\arrow{e,t}{\coprod g_c}
			\arrow{s,l}{\subseteq}
		\node{Y}
			\arrow{s,r}{p}
	\\
		\node{\hocolim_{\C}\; F}
			\arrow{e,t}{f}
			\arrow{ne,t,..}{\bar{g}}
		\node{Z}
	\end{diagram}
\end{equation}
strictly commutes. For every $c\in\Ob(\C)$ denote $f_c:=f|_{F(c)}$ and
\begin{equation}\label{e:DefFib}
	\Fib(c):=(p_c)^{-1}(f_c),
\end{equation}
where
\[
	p_c:=\map(F(c),p):\map(F(c),Y)_{g_c}\xrightarrow{p_*}\map(F(c),Z)_{f_c}.
\]
Obviously $p_c$ is a fibration with fiber $\Fib(c)$. We will make two more assumptions: for every $c\in\Ob(\C)$
\begin{itemize}
\item{the connected component of the mapping space $\map(F(c),Z)_{f_c}$ is simply connected,}
\item{$\Fib(c)$ is a simple space (i.e.\ its fundamental group acts trivially on all homotopy groups).}
\end{itemize}

Every morphism $\alpha:c\rightarrow c'$ in $\C$ choose a map
\[
	\Fib(\alpha):\Fib(c')=p_{c'}^{-1}(f_{c'})\xrightarrow{F(\alpha)^*} p_c^{-1}(f_{c'}\circ F(\alpha))\xrightarrow{\simeq} p_c^{-1}(f_c)=\Fib(c).
\]
The first assumption guarantees that the homotopy class of $\Fib(\alpha)$ does not depend on the choice of a homotopy equivalence between fibers of $p_c$. Thus, $\Fib$ is a contravariant functor from $\mathcal{C}$ to the homotopy category $\HSp$. By the second assumption, for every $c\in \Ob(\C)$ homotopy groups of $\Fib(c)$ for different choices of basepoints are naturally isomorphic. Therefore the composition  $\pi_n\circ \Fib$ is a contravariant functor from $\mathcal{C}$ into the category of abelian groups $\Ab$. The main result of this Section is the following

\begin{thm}\label{t:Obstr}
	If $H^{i+1}(\mathcal{C};\pi_i(\Fib))=0$ for all $i>0$, then there exists a lifting extension \hbox{$\bar{g}:\hocolim_{\C}\; F\rightarrow Y$} which makes the diagram \ref{e:HLDiag} commutative.
\end{thm}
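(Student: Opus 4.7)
The plan is to follow Wojtkowiak's original strategy adapted to the lifting setting, using the skeletal filtration of $\hocolim_\C F$ by the nerve of $\C$. Let $X_n \subseteq \hocolim_\C F$ denote the subspace built from chains $c_0 \to \cdots \to c_k$ with $k \leq n$, so that $X_0 = \coprod_c F(c)$ already carries $\bar g_0 := \coprod g_c$. I will extend inductively to maps $\bar g_n : X_n \to Y$ with $p \circ \bar g_n = f|_{X_n}$, possibly altering the choice of $\bar g_{n-1}$ on cells of dimension $\geq 1$ at each stage in order to cancel obstructions.

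The $n$-th stage attaches one cell $F(c_0) \times \Delta^n$ (along $F(c_0) \times \partial \Delta^n$) for each non-degenerate $n$-chain $\sigma = (c_0 \to \cdots \to c_n)$. By the mapping-space adjunction, extending $\bar g_{n-1}$ over such a cell is the same as extending a map $\partial \Delta^n \to \map(F(c_0), Y)_{g_{c_0}}$ to $\Delta^n$, over the prescribed map $\Delta^n \to \map(F(c_0), Z)_{f_{c_0}}$. Because $p_{c_0}$ is a fibration with fiber $\Fib(c_0)$ and $\Delta^n$ is contractible, there is a well-defined obstruction $o_\sigma \in \pi_{n-1}(\Fib(c_0))$: the simple-connectedness of the base mapping space makes it independent of the contraction chosen in the base, and the simplicity of $\Fib(c_0)$ removes any basepoint ambiguity. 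The $o_\sigma$ assemble into a cochain $\omega_n \in C^n(\C; \pi_{n-1}(\Fib))$ in the standard complex computing $H^*(\C; -)$ for the contravariant functor $\pi_{n-1}(\Fib) : \C^{op} \to \Ab$.

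For the base case $n = 1$, I would use the homotopy compatibility to pick, for each morphism $\alpha : c \to c'$, a homotopy $g_{c'} \circ F(\alpha) \simeq g_c$, and then, by the fibration property of $p$ together with the simple-connectedness of $\map(F(c), Z)_{f_c}$, adjust it to cover the tautological homotopy that $f$ prescribes on the cylinder $F(c) \times [0,1] \subseteq X_1$; no cohomological hypothesis is needed here. For $n \geq 2$, a standard obstruction-theoretic argument shows that $\omega_n$ is a cocycle (by evaluating on boundaries of $(n+1)$-chains), and that replacing $\bar g_{n-1}$ on the $(n-1)$-cells by a modification parametrized by a cochain in $C^{n-1}(\C; \pi_{n-1}(\Fib))$ shifts $\omega_n$ by the corresponding coboundary. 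Hence $H^n(\C; \pi_{n-1}(\Fib)) = 0$ allows one to kill $\omega_n$ and build $\bar g_n$; setting $i = n-1$, the theorem's hypothesis supplies precisely this vanishing for every $i \geq 1$, so the induction carries through.

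The main technical hurdle I anticipate is proving the cocycle identity for $\omega_n$ and establishing the precise coboundary relation between obstructions arising from different choices of $\bar g_{n-1}$; this is a relative version of Wojtkowiak's bookkeeping, where the combinatorics of the simplicial replacement have to be reconciled with composition in $\C$ and the gluings along $F(\alpha)$. The simple-connectedness and simplicity assumptions stated just before the theorem enter exactly to guarantee that $\pi_{n-1}(\Fib)$ is a genuine contravariant functor to $\Ab$ (and not just defined up to conjugacy), so that the obstruction classes and modification cochains live in the same cohomology groups.
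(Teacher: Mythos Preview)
Your proposal is correct and follows essentially the same route as the paper: the skeletal filtration of $\hocolim_\C F$, obstruction cochains to extending a lift from $X_{n-1}$ to $X_n$ lying in $C^n(\C;\pi_{n-1}(\Fib))$, the cocycle/coboundary identities (which the paper proves as its Proposition on properties of $\mathfrak{O}_n$ and $\mathfrak{D}_n$), and the inductive step killing the obstruction by modifying the lift on the top cells of $X_{n-1}$. One small point you leave implicit that the paper makes explicit is the telescoping at the end: since the modification at each stage only touches the top-dimensional cells, the restrictions $\bar g_{n+1}|_{X_{n-1}}$ eventually stabilize, yielding a coherent limit map on $\hocolim_\C F$.
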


This results reduces to the result of \cite{W} in case where $Z$ is a single-point space.

\subsection*{Elementary lifting extension problem}
To establish notation, we recall some elementary results of the classical obstruction theory. Fix a fibration \hbox{$q:E\rightarrow B$} such that its fiber $F$ is simple, and assume that $B$ is simply connected. Consider the following lifting extension problem:
\begin{equation}\label{e:LiftingDiag}
\begin{diagram}
		\node{\partial\Delta^{n+1}}
			\arrow{e,t}{s}
			\arrow{s,l}{\subseteq}
		\node{E}
			\arrow{s,r}{q}
		\\
		\node{\Delta^{n+1}}
			\arrow{e,t}{r}
			\arrow{ne,t,..}{t}
		\node{B}
	\end{diagram}
\end{equation}
Let 
\begin{equation}
	\mathcal{L}(s,r):=\{t\in\map(\Delta^{n+1},E):\; t|_{\partial\Delta^{n+1}}=s\;\text{and}\; q\circ t=r\}
\end{equation}
be the space of  lifting extensions.

The map $s$ factors through the total space $r^*E$ of the pull-back fibration over $B$. Let
\begin{equation}\label{e:ElementaryO}
	\mathfrak{o}_n(s,r)\in [\partial\Delta^{n+1},r^*E]\cong \pi_n(r^*E)\simeq \pi_{n}(F)
\end{equation}
be the homotopy class corresponding to this factorization; we will call it \emph{the obstruction class}. This definition depends neither on the choice of basepoints of the homotopy groups (due to simplicity of $F$), nor on the choice of homotopy equivalence $r^*E\simeq F$ (since $B$ is simply connected).
For $t,t'\in\mathcal{L}(s,r)$ define \emph{the difference class} 
\begin{equation}\label{e:ElementaryD}
	\mathfrak{d}_{n+1}(t,t')\in [\Delta^{n+1}\cup_{\partial\Delta^{n+1}}\Delta^{n+1},r^*E]\simeq \pi_{n+1}(F),
\end{equation}
which is the obstruction class of the lifting extension problem
\begin{equation}\label{e:LiftingDiagDiff}
\begin{diagram}
		\node{\Delta^{n+1}\cup_{\partial\Delta^{n+1}}\Delta^{n+1}}
			\arrow{e,t}{t\cup t'}
			\arrow{s,l}{pr}
		\node{E}
			\arrow{s,r}{q}
		\\
		\node{\Delta^{n+1}}
			\arrow{e,t}{r}
			\arrow{ne,t,..}{ }
		\node{B}
	\end{diagram}
\end{equation}

Let $d_i:\Delta^{k}\rightarrow \Delta^{k+1}$ be the inclusion onto the $i$-th face of the simplex, and let $\Delta^k_{(l)}$ be the $l$-th skeleton of $\Delta^k$. Let us state some elementary properties of concepts defined above:
\begin{prp}\label{p:ObstrE} 
	Fix a map $r:\Delta^{n+1}\rightarrow B$.
	\begin{enumerate}
	\item{If $s:\partial\Delta^n\rightarrow E$ is a lifting of $r|_{\partial\Delta^n}$ along $q$, then $\mathcal{L}(s,r)\neq \emptyset$ if and only if $\mathfrak{o}_n(s,r)=0$.}
	\item{Let $s,s':\partial\Delta^{n+1}\rightarrow Y$ be liftings of $r|_{\partial\Delta^n}$ along $q$. Assume that $s|_{\Delta^{n+1}_{(n-1)}}=s'|_{\Delta^{n+1}_{(n-1)}}$. Then
		\[\mathfrak{o}_n(s',r)=\mathfrak{o}_n(s,r)+\sum_{i=0}^{n+1} (-1)^i \mathfrak{d}_n(s'\circ d_i, s\circ d_i).\]
	}
	\item{For any commutative diagram
\begin{equation*}
\begin{diagram}
		\node{\Delta^{n+2}_{(n)}}
			\arrow{e,t}{s}
			\arrow{s,l}{\subseteq}
		\node{E}
			\arrow{s,r}{q}
		\\
		\node{\Delta^{n+2}_{(n+1)}}
			\arrow{e,t}{r}
		\node{B}
	\end{diagram}
\end{equation*}	
holds $\sum_{i=0}^{n+2}(-1)^i \mathfrak{o}_n(s\circ d_i, r\circ d_i)=0$.
	}
	\item{
		If $s:\partial\Delta^n\rightarrow E$ is a lifting of $r|_{\partial\Delta^n}$ along $q$, then for every $t\in\mathcal{L}(s,r)$ and every $u\in\pi_{n+1}(F)$ there exists $t'\in\mathcal{L}(s,r)$ such that $\mathfrak{d}_{n-1}(t',t)=u$.\qed
	}
	\end{enumerate}
\end{prp}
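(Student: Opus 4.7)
The uniform strategy is to reduce all four parts to computations in the fiber $F$ of $q$, exploiting that every simplex is contractible. For any map $r:\Delta^{k+1}\to B$ under consideration, pull $q$ back to obtain a fibration $r^*q:r^*E\to \Delta^{k+1}$; since the base is contractible and $F$ is simple, $r^*q$ is fiber-homotopy equivalent to the trivial fibration $\Delta^{k+1}\times F$, and since $B$ is simply connected the equivalences $r^*E\simeq F$ are canonical up to homotopy (so $\pi_k(r^*E)\cong\pi_k(F)$ unambiguously, which is already implicit in the definitions of $\mathfrak{o}_n$ and $\mathfrak{d}_n$). Under this trivialization, lifts of $r|_K$ correspond to plain maps $K\to F$. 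Part (1) is then immediate: $s$ trivializes to a map $\bar s:\partial\Delta^{n+1}\cong S^n\to F$ with $\mathfrak{o}_n(s,r)=[\bar s]\in\pi_n(F)$, and an extension over the contractible disk $\Delta^{n+1}$ exists iff this class vanishes.

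For (2), after trivialization $\bar s,\bar s':\partial\Delta^{n+1}\to F$ agree on the $(n-1)$-skeleton. On each $n$-face $d_i\Delta^n\subset\partial\Delta^{n+1}$ the two maps assemble into a map $\Delta^n\cup_{\partial\Delta^n}\Delta^n\cong S^n\to F$ whose class in $\pi_n(F)$ is, by definition, $\mathfrak{d}_n(s'\circ d_i,s\circ d_i)$. Since $\partial\Delta^{n+1}$ is a simplicial $S^n$ whose cellular fundamental class is the signed sum $\sum(-1)^i[d_i\Delta^n]$, the standard cellular difference formula for maps into a simple space agreeing on the $(n-1)$-skeleton yields $[\bar s']-[\bar s]=\sum_{i=0}^{n+1}(-1)^i\mathfrak{d}_n(s'\circ d_i,s\circ d_i)$, which is the claim. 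For (4) (read as $\mathfrak{d}_n(t',t)=u$ with $u\in\pi_n(F)$, the printed indices appearing to contain a typo against the definition), realize $u$ by a based map $v:S^n\to F$, choose a small disk $D\subset\mathrm{int}(\Delta^n)$, homotope $\bar t$ rel $\partial\Delta^n$ to be constant on $D$, then replace $\bar t|_D$ by $v$ regarded as a map $D/\partial D\to F$. The resulting $\bar t'$ agrees with $\bar t$ on $\partial\Delta^n$, and direct inspection of the double-simplex defining $\mathfrak{d}$ shows the extra contribution is precisely $u$.

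Part (3) is the main obstacle: it is the classical cocycle identity for obstruction cochains, which I would derive from the long exact sequence of a pair. After trivializing over $\Delta^{n+2}$, each $\alpha_i:=\mathfrak{o}_n(s\circ d_i,r\circ d_i)$ equals $\bar s_*[\phi_i]\in\pi_n(F)$, where $\phi_i:\partial\Delta^{n+1}_i\cong S^n\to\Delta^{n+2}_{(n)}$ is the attaching map of the $(n+1)$-face $d_i\Delta^{n+1}$ inside $\partial\Delta^{n+2}$. In the long exact sequence of the pair $(\partial\Delta^{n+2},\Delta^{n+2}_{(n)})$, the connecting map $\partial:\pi_{n+1}(\partial\Delta^{n+2},\Delta^{n+2}_{(n)})\to\pi_n(\Delta^{n+2}_{(n)})$ sends the class of the characteristic map of $d_i\Delta^{n+1}$ to $[\phi_i]$. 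The signed sum $\sum(-1)^i[d_i\Delta^{n+1}]$ is the cellular fundamental class of $\partial\Delta^{n+2}\cong S^{n+1}$ and thus lifts to a generator of $\pi_{n+1}(S^{n+1})\cong\Z$; by exactness, $\sum(-1)^i[\phi_i]=0$ in $\pi_n(\Delta^{n+2}_{(n)})$, and applying $\bar s_*$ gives $\sum(-1)^i\alpha_i=0$. The usual mild care at low $n$ (replacing $\pi$ by its abelianization in dimension one) is inessential for the range in which Theorem \ref{t:Obstr} is applied.
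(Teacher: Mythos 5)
The paper gives no proof of this proposition at all: it is recalled as ``elementary results of the classical obstruction theory'' and closed with a \emph{qed} symbol, so the only question is whether your classical argument stands on its own. For (1), (2) and (4) it does: trivializing $r^*E$ over the contractible simplex, using the standard cellular difference formula for two maps into a simple space agreeing on the $(n-1)$-skeleton, and modifying a lift on a small disk in the interior are exactly the usual proofs. (Two small points: the correct repair of the typos in (4) is $\mathfrak{d}_{n+1}(t',t)=u\in\pi_{n+1}(F)$ with $t,t'$ defined on $\Delta^{n+1}$, rather than your shift down by one; and when you pass between maps into $F$ and elements of $\mathcal{L}(s,r)$ you should say a word about rectifying a section given up to vertical homotopy to a strict lift via the homotopy lifting property --- routine, but it is the step that makes the trivialized picture legitimate.)

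Part (3) is where your proof has a genuine gap. You begin ``after trivializing over $\Delta^{n+2}$'', but under the stated hypotheses $r$ is defined only on $\Delta^{n+2}_{(n+1)}=\partial\Delta^{n+2}$, so there is nothing to pull back over $\Delta^{n+2}$, and the pullback $r^*E\to\partial\Delta^{n+2}\cong S^{n+1}$ need not be fibre-homotopy trivial; without a single trivialization your maps $\bar s$ and $\phi_i$ over different faces do not assemble, and the exact-sequence argument does not start. This is not a removable technicality: with the hypotheses exactly as printed the identity is false. Take $n=1$, let $q:S^3\to S^2$ be the Hopf fibration ($F=S^1$ is simple, $B=S^2$ simply connected), let $r:\partial\Delta^3\to S^2$ be a homeomorphism and $s$ any strict lift of $r$ over the $1$-skeleton (it exists, circle bundles over graphs being trivial); then $\sum_{i=0}^{3}(-1)^i\mathfrak{o}_1(s\circ d_i,r\circ d_i)$ is the evaluation of the Euler class of $r^*S^3$ on the fundamental cycle of $\partial\Delta^3$, hence $\pm1\neq0$. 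The statement needs the extra hypothesis that $r$ extends over all of $\Delta^{n+2}$ (equivalently that $r^*E$ is fibre-homotopy trivial over $\partial\Delta^{n+2}$), and this is precisely what is available in the only place (3) is used, namely the proof of \ref{p:ObstrProp}: there $r=\mathrm{Ad}^f_\sigma$ is the restriction of a map defined on the whole simplex, because $f$ is defined on all of $\hocolim_{\C}F$. With that hypothesis added, your trivialization plus relative-Hurewicz/long-exact-sequence argument does prove the cocycle identity (with the usual care at $n=1$, where $\Delta^{n+2}_{(n)}$ is not simply connected --- harmless in the application, where the fibers are simply connected). So the fix is to state and use the extension of $r$, and to note explicitly that it holds where the proposition is applied.
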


\begin{prp}[Functoriality of obstruction and difference classes]\label{p:ObstrF}
	Consider a commutative diagram
\begin{equation*}
\begin{diagram}
		\node{}
		\node{F}
			\arrow{e,t}{c_F}
			\arrow{s}
		\node{F'}
			\arrow{s}
	\\
		\node{\partial\Delta^{n+1}}
			\arrow{s,l}{\subseteq}
			\arrow{e,t}{s}
		\node{E}
			\arrow{e,t}{c_E}
			\arrow{s,l}{q}
		\node{E'}
			\arrow{s,l}{q'}
	\\
		\node{\Delta^{n+1}}
			\arrow{e,t}{r}
		\node{B}
			\arrow{e,t}{c_B}
		\node{B'}
\end{diagram}
\end{equation*}
	where $q':E'\rightarrow B'$ is a fibration with a fibre $F'$. Assume that $B'$ is simply connected and that $F'$ is simple. Then
	\begin{enumerate}
	\item{$\mathfrak{o}_n(c_E\circ s,c_B\circ r)=(c_F)_*(\mathfrak{o}_n(s,r)).$}
	\item{If $t,t'\in\mathcal{L}(s,r)$ then
	\[
		\mathfrak{d}_{n+1}(c_E\circ t,c_E\circ t')=(c_F)_*\mathfrak{d}_{n+1}(t,t').\qed
	\]}
	\end{enumerate}
\end{prp}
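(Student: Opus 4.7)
The plan is to deduce part (2) from part (1), since the difference class $\mathfrak{d}_{n+1}(t,t')$ is defined in \eqref{e:ElementaryD} itself as the obstruction class of the lifting extension problem \eqref{e:LiftingDiagDiff}; applying part (1) to that auxiliary problem (with the evident induced data $c_E\circ(t\cup t')$ and $c_B\circ r$) yields the claim for differences immediately. So the whole proposition reduces to part (1).

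For part (1), I would work entirely at the level of the pullback fibrations. The given commuting diagram produces a map of pullback fibrations
\[
r^*E \longrightarrow (c_B\circ r)^*E',
\]
over $\Delta^{n+1}$, induced by $c_E$ and the universal property of the pullback. Because $\Delta^{n+1}$ is contractible, both pullbacks are fiber homotopy trivial over $\Delta^{n+1}$; choosing a point $v\in\Delta^{n+1}$ one obtains homotopy equivalences $r^*E\simeq F$ and $(c_B\circ r)^*E'\simeq F'$ which, under the induced map above, become $c_F$ on fibers over $v$. The simple connectivity of $B$ (respectively $B'$) and simplicity of $F$ (respectively $F'$) guarantee that these identifications of $\pi_n$ do not depend on the auxiliary choices, and so the induced map $\pi_n(r^*E)\to\pi_n((c_B\circ r)^*E')$ agrees with $(c_F)_*:\pi_n(F)\to\pi_n(F')$.

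Now, by definition, $\mathfrak{o}_n(s,r)$ is the homotopy class of the unique lift $\tilde s:\partial\Delta^{n+1}\to r^*E$ of $s$. Applying the functoriality of pullbacks, the composition $c_E\circ s$ lifts canonically to $(c_B\circ r)^*E'$ as the composite of $\tilde s$ with the natural map $r^*E\to (c_B\circ r)^*E'$ described above. Therefore
\[
\mathfrak{o}_n(c_E\circ s,\,c_B\circ r)
=\bigl[\text{(natural map)}\circ \tilde s\bigr]
=(c_F)_*[\tilde s]
=(c_F)_*\mathfrak{o}_n(s,r),
\]
which is what we want.

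The only nontrivial point, and the main thing to be careful about, is the naturality of the identification $\pi_n(r^*E)\cong\pi_n(F)$: a priori it depends on a choice of basepoint in $\partial\Delta^{n+1}$, a choice of path in $\Delta^{n+1}$, and a choice of fiber-homotopy trivialization. Simple connectivity of $B$ and $B'$ kills the path ambiguity at both ends, and simplicity of $F$ and $F'$ kills the basepoint ambiguity in the homotopy groups, so the square of identifications really does commute, and the computation above is unambiguous. Once this naturality is in place, the proof of (1) is formal, and (2) follows by applying (1) to the problem \eqref{e:LiftingDiagDiff}.
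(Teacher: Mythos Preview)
The paper does not give a proof of this proposition: it ends with a \qed{} symbol and is treated as a standard, elementary fact about obstruction theory for fibrations. Your argument is correct and is exactly the natural one---use the universal property of pullbacks to produce the map $r^*E\to(c_B\circ r)^*E'$, observe that on fibers it is $c_F$ (well-defined on $\pi_n$ by the simplicity and simple-connectivity hypotheses), and then read off the functoriality from the definition of $\mathfrak{o}_n$; the reduction of (2) to (1) via the defining diagram~\eqref{e:LiftingDiagDiff} is also fine, with the minor caveat that (1) is literally stated for the pair $(\partial\Delta^{n+1},\Delta^{n+1})$ while the difference class uses $(\Delta^{n+1}\cup_{\partial\Delta^{n+1}}\Delta^{n+1},\Delta^{n+1})$, but this pair is homeomorphic to $(\partial\Delta^{n+2},\Delta^{n+2})$ so the same argument applies verbatim.
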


\subsection*{Cochain complex}
We return to considering the lifting extension problem \ref{e:HLDiag}. Here we define a cochain complex which can be used to calculating cohomology groups $H^j(\mathcal{C};\pi_i(\Fib))$. Also obstruction classes and difference classes which are analogues of \ref{e:ElementaryO} and \ref{e:ElementaryD} will be defined as its cochains.

Let $N(\C)$ denote the nerve of $\C$ and let $N(\C)_i$ be the set of $i$-simplices of $N(\C)$. For $i,j>0$ define groups
\begin{equation}
	C^j_i=\prod_{\sigma\in N(\C)_j} \pi_i(\Fib(\sigma(0)))
\end{equation}
and differentials $\delta^j_i:C^j_i\rightarrow C_i^{j+1}$
\begin{equation}
	\delta_i^j(u)(\sigma)=\Fib(\sigma(0\rightarrow 1))_*(u(d_0\sigma))+\sum_{k=1}^{j+1} (-1)^k u(d_k(\sigma))
\end{equation}
for $u\in C^j_i$, $\sigma\in N(\mathcal{C})_{j+1}$. Let $Z^j_i, B^j_i,H^j_i$ denote respectively the cocyles, the coboundaries and the cohomology of the cochain complex $(C^*_i,\delta^*_i)$. Following \cite[Lemma 2]{O2}, $H^j(\mathcal{C};\pi_i(\Fib))=H^j_i$ for all $i$,$j$.

\subsection*{Adjoint maps}
Denote for short $X=\hocolim_{\mathcal{C}}\;F$ and $X_k=\hocolim^{(k)}_{\mathcal{C}}\;F$. For a space $W$, a map $a:X_k\rightarrow W$ and a simplex $\sigma\in N(\C)_n$ let
\begin{equation}
	\Ad[a]\sigma:\Delta^n_{(k)}\rightarrow \map(F(\sigma(0),W))
\end{equation}
be the adjoint map to the composition
\begin{equation*}
	\Delta^n_{(k)}\times F(\sigma(0))\xrightarrow{\sigma} \hoc[k]=X_k\buildrel{a}\over\rightarrow W.
\end{equation*}
Note that 
\begin{equation}\label{e:AdComp}
	\Ad[a]{\sigma}\circ d_i=\begin{cases}
		\Ad[a]{d_i\sigma} & \text{for $i>0$}\\
		F(\sigma(0\rightarrow 1))\circ \Ad[a]{d_0\sigma} & \text{for $i=0$,}
	\end{cases}
\end{equation}
and that maps $a:X_k\rightarrow W$ are in 1-1 correspondence with families of maps $\{\Ad[a]\sigma\}_{\sigma\in N(\C)}$ satisfying these relations.

\subsection*{Lifting extension spaces}
For $n\geq 0$ let $\mathcal{E}_0^n$ be the space of maps $g:X_n\rightarrow Y$ such that the diagram
\begin{equation}
	\begin{diagram}
		\node{\coprod_{c\in\C} F(c)}
			\arrow{e,t}{\coprod g_c}
			\arrow{s,l}{\subseteq}
		\node{Y}
			\arrow{s,r}{p}
	\\
		\node{X_n}
			\arrow{e,t}{f|_{X_n}}
			\arrow{ne,t}{g}
		\node{Z}
	\end{diagram}
\end{equation}
commutes. Denote $\mathcal{E}_0^\infty:=\lim\; \mathcal{E}^n_0$; note that this is the set of solutions of the lifting extension problem \ref{e:HLDiag}. For $g\in \mathcal{E}_0^n$ and $k\leq n\leq m$ let $\mathcal{E}_k^m(g)$ be the space of maps $h:X_m\rightarrow Y$ such that the diagram 
\begin{equation}
\begin{diagram}
		\node{X_k}
			\arrow{e,t}{g|_{X_k}}
			\arrow{s,l}{\subseteq}
		\node{Y}
			\arrow{s,r}{p}
		\\
		\node{X_{m}}
			\arrow{e,t}{f|_{X_m}}
			\arrow{ne,t}{h}
		\node{Z}
	\end{diagram}
\end{equation}
commutes.

\subsection*{Obstruction cochains and difference cochains}
For $g\in\mathcal{E}_0^n$ define \emph{the obstruction cochain} $\mathfrak{O}_{n}(g)\in C^{n+1}_n$ by
\begin{equation}
	\mathfrak{O}_n(g)(\sigma):= \mathfrak{o}_n(\Ad[g]\sigma,\Ad[f]\sigma) \in \pi_n(Fib(\sigma(0))),
\end{equation}
where $\mathfrak{o}_n(\Ad[g]\sigma,\Ad[f]\sigma)$ is the obstruction of the lifting problem
\begin{equation}\label{e:LiftingDiagM}
\begin{diagram}
		\node{\partial\Delta^{n+1}}
			\arrow{e,t}{\Ad[g]{\sigma}}
			\arrow{s,l}{\subseteq}
		\node{\map(F(\sigma(0)),Y)_{g_{\sigma(0)}}}
			\arrow{s,r}{\map(F(\sigma(0)),p)}
		\\
		\node{\Delta^{n+1}}
			\arrow{e,t}{\Ad[f]\sigma}
			\arrow{ne,..}
		\node{\map(F(\sigma(0)),Z)_{f_{\sigma(0)}}}
	\end{diagram}
\end{equation}

For $h,h'\in\mathcal{E}^{n+1}_n(g)$ define \emph{the difference cochain} $\mathfrak{D}_{n+1}(h',h)\in C_{n+1}^{n+1}$ by
\begin{equation}
	\mathfrak{D}_{n+1}(h',h)(\sigma)=\mathfrak{d}_{n+1}(\Ad[h']\sigma,\Ad[h]\sigma)\in \pi_{n+1}(\Fib(\sigma(0))).
\end{equation}

\begin{prp}\label{p:ObstrProp}
	Fix $g\in\mathcal{E}_0^n$, $n>0$.
	\begin{enumerate}
		\item{$\mathcal{E}^{n+1}_n(g)\neq \emptyset$ if and only if $\mathfrak{O}_n(g)=0$.}
		\item{$\mathfrak{O}_{n}(g)\in Z^{n+1}_n$.}
		\item{$\mathfrak{O}_n(g')=\mathfrak{O}_n(g)+\delta^n_n(\mathfrak{D}_{n}(g',g))$ for any $g'\in \mathcal{E}_{n-1}^{n}(g)$.}
		\item{For every $u\in C_{n+1}^{n+1}$ and every $h\in \mathcal{E}^{n+1}_n(g)$ there exists a map $h'\in \mathcal{E}^{n+1}_n(g)$ such that $\mathfrak{D}_{n+1}(h',h)=u$.} 
	\end{enumerate}
\end{prp}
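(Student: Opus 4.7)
The overall plan is to reduce each of the four statements to its ``pointwise'' analogue in Proposition~\ref{p:ObstrE}, applied separately on every simplex $\sigma\in N(\C)_{n+1}$ or $N(\C)_{n+2}$. The bridge between the two settings is the adjunction of~(\ref{e:AdComp}): a map $h:X_{n+1}\to Y$ extending $g\in\mathcal{E}_0^n$ is the same data as a family of simplex-wise adjoints $\{\Ad[h]\sigma:\Delta^{n+1}\to \map(F(\sigma(0)),Y)_{g_{\sigma(0)}}\}_{\sigma}$ lifting $\{\Ad[f]\sigma\}$, and compatible with face maps according to~(\ref{e:AdComp}). All obstruction and difference classes are defined through these adjoints, so (assuming we have checked the necessary simple-connectedness and simplicity hypotheses needed to invoke the elementary theory for the fibration $\map(F(\sigma(0)),p)$) the global assertions will follow by gluing simplex-wise data.

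For (1), the obstruction $\mathfrak{O}_n(g)(\sigma)$ is by definition the elementary obstruction to extending $\Ad[g]\sigma$ to $\Delta^{n+1}$ over $\Ad[f]\sigma$. By Proposition~\ref{p:ObstrE}(1) all these elementary obstructions vanish iff $\mathfrak{O}_n(g)=0$, in which case we choose a lift on each simplex $\sigma$; the face-compatibility~(\ref{e:AdComp}) required to assemble them into an element of $\mathcal{E}^{n+1}_n(g)$ holds automatically, because each chosen extension restricts on $\partial\Delta^{n+1}$ to $\Ad[g]\sigma$, whose face relations are already those of $g$. For (2), apply Proposition~\ref{p:ObstrE}(3) to the diagram given by $\Ad[g]\sigma$ on $\Delta^{n+2}_{(n)}$ and $\Ad[f]\sigma$ on $\Delta^{n+2}_{(n+1)}$ for each $\sigma\in N(\C)_{n+2}$. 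Using~(\ref{e:AdComp}) to rewrite the terms $\mathfrak{o}_n(\Ad[g]\sigma\circ d_i,\Ad[f]\sigma\circ d_i)$ as $\mathfrak{O}_n(g)(d_i\sigma)$ for $i>0$ and, for $i=0$, as $\Fib(\sigma(0\to 1))_*\mathfrak{O}_n(g)(d_0\sigma)$ via the functoriality statement of Proposition~\ref{p:ObstrF}(1) applied to $F(\sigma(0\to 1))$, yields precisely the cocycle relation defining $\delta^{n+1}_n$. Statement (3) is the analogous simplex-wise use of Proposition~\ref{p:ObstrE}(2): for every $\sigma\in N(\C)_{n+1}$ the adjoints $\Ad[g]\sigma$ and $\Ad[g']\sigma$ agree on $\Delta^{n+1}_{(n-1)}$ since $g'\in\mathcal{E}^n_{n-1}(g)$, so the elementary identity gives the formula
\[
\mathfrak{O}_n(g')(\sigma)=\mathfrak{O}_n(g)(\sigma)+\sum_{i=0}^{n+1}(-1)^i\mathfrak{d}_n(\Ad[g']\sigma\circ d_i,\Ad[g]\sigma\circ d_i),
\]
which, after applying~(\ref{e:AdComp}) and Proposition~\ref{p:ObstrF}(2) to the $i=0$ term, is exactly $(\mathfrak{O}_n(g)+\delta^n_n\mathfrak{D}_n(g',g))(\sigma)$.

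For (4), given $h\in\mathcal{E}_n^{n+1}(g)$ and $u\in C^{n+1}_{n+1}$, I would construct $h'$ one top cell at a time. On each $\sigma\in N(\C)_{n+1}$ the cell $\Delta^{n+1}\times F(\sigma(0))$ is attached to $X_n$ along its boundary $\partial\Delta^{n+1}\times F(\sigma(0))$, and $h$ agrees with $g$ there; by Proposition~\ref{p:ObstrE}(4) I may choose a new extension on the interior of this cell whose adjoint $\Ad[h']\sigma$ satisfies $\mathfrak{d}_{n+1}(\Ad[h']\sigma,\Ad[h]\sigma)=u(\sigma)$. Because the modification is supported in the interior of the $(n+1)$-cell, the resulting data glue to a well-defined map $h':X_{n+1}\to Y$ in $\mathcal{E}_n^{n+1}(g)$.

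The main subtlety is in (2), where the $d_0$ face is special in both~(\ref{e:AdComp}) and in the definition of $\delta^{n+1}_n$. The argument rests on identifying the abstract pushforward $\Fib(\sigma(0\to 1))_*:\pi_n(\Fib(\sigma(1)))\to\pi_n(\Fib(\sigma(0)))$ used in the differential with the pushforward on elementary obstructions induced by the map $F(\sigma(0\to 1)):F(\sigma(0))\to F(\sigma(1))$ of source spaces on mapping spaces, which is the only non-formal point and follows directly from the definition of $\Fib(\alpha)$ given just before the statement of Theorem~\ref{t:Obstr}. The simple-connectedness of $\map(F(c),Z)_{f_c}$ and the simplicity of $\Fib(c)$, assumed in the setup, are exactly what is needed to apply Propositions~\ref{p:ObstrE} and~\ref{p:ObstrF} fibre-wise without basepoint ambiguities.
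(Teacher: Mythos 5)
Your proposal is correct and follows essentially the same route as the paper: both reduce each part to the elementary statements of Proposition~\ref{p:ObstrE} applied simplex-wise to the adjoints $\Ad[g]\sigma$, $\Ad[f]\sigma$, using the face relations (\ref{e:AdComp}) together with the functoriality of obstruction and difference classes (Proposition~\ref{p:ObstrF}) to handle the $d_0$ face, and both treat (1) and (4) by the observation that extensions over the top cells $\Delta^{n+1}\times F(\sigma(0))$ can be chosen independently once the boundary data from $g$ is fixed. Your explicit appeal to \ref{p:ObstrF}(1) in part (2) is in fact the correct reference for the step the paper attributes (apparently by a slip) to \ref{p:ObstrE}(2).
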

\begin{proof}
	Existence of $h\in \mathcal{E}^{n+1}_n(g)$ is equivalent to existence of a family of maps $\{\Ad[h]\sigma\}\in \mathcal{L}(\Ad[g]\sigma,\Ad[f]\sigma)$ for $\sigma\in N(\C)_{n+1}$. Then (1) is a straightforward consequence of \ref{p:ObstrE}.(1). For similar reasons, (4) is a consequence of \ref{p:ObstrE}.(4). To prove (2), we check that for $\sigma\in N(\C)_{n+2}$ we have
	\begin{multline*}
		\delta_n^{n+1}(\mathfrak{O}_n(g))(\sigma)=\Fib(\sigma(0\rightarrow 1))^*(\mathfrak{O}_n(g)(d_0\sigma))+\sum_{i=1}^{n+2} (-1)^i \mathfrak{O}_n(g)(d_i\sigma)=\\
		\Fib(\sigma(0\rightarrow 1))^*\mathfrak{o}_n(\Ad[g]{d_0\sigma},\Ad[f]{d_0\sigma})+\sum_{i=1}^{n+2} (-1)^i \mathfrak{o}_n(\Ad[g]{d_i\sigma},\Ad[f]{d_i\sigma}) \buildrel{\text{by \ref{p:ObstrE}(2)}}\over{=}\\
		\mathfrak{o}_n(\Ad[g]{d_0\sigma}\circ F^*(\sigma(0\rightarrow 1)), \Ad[f]{d_0\sigma}\circ F^*(\sigma(0\rightarrow 1)))+ \sum_{i=1}^{n+2} (-1)^i \mathfrak{o}_n(\Ad[g]{d_i\sigma},\Ad[f]{d_i\sigma})\\
		\buildrel{\text{by \ref{e:AdComp}}}\over{=}\sum_{i=0}^{n+2} (-1)^i \mathfrak{o}_n(\Ad[g]\sigma\circ d_i, \Ad[f]\sigma\circ d_i)\buildrel{\text{by \ref{p:ObstrE}(3)}}\over{=}0.
	\end{multline*}
	Now let  $\sigma\in N(\C)_{n+1}$. As a consequence of \ref{e:AdComp}, for $i>0$ we have
	\[
		\mathfrak{d}_{n}(\Ad[g']{\sigma}\circ d_i,\Ad[\sigma]{g}\circ d_i)=\mathfrak{d}_{n}(\Ad[g']{d_i\sigma},\Ad[g]{d_i\sigma})=\mathfrak{D}_n(g',g)(d_i\sigma)
	\]
	and
	\begin{multline*}
		\mathfrak{d}_{n}(\Ad[g']\sigma\circ d_0,\Ad[g]\sigma\circ d_0)=\mathfrak{d}_{n}(F(\sigma(0\rightarrow 1))\circ \Ad[g']{d_0\sigma},F(\sigma(0\rightarrow 1))\circ \Ad[g]{d_0\sigma})\buildrel{\ref{p:ObstrF}.(2)}\over{=}\\
		\Fib(\sigma(0,1))_*\left( \mathfrak{d}_n(\Ad[g']{d_0\sigma},\Ad[g]{d_0\sigma})\right)=\Fib(\sigma(0,1))_*(\mathfrak{D}_n(g',g)(d_0\sigma)).
	\end{multline*}
	Finally,
	\begin{multline*}
		\mathfrak{O}_n(g')(\sigma)=\mathfrak{o}_n(\Ad[g']{\sigma},\Ad[f]\sigma)
			\buildrel{\ref{p:ObstrE}.(2)}\over{=}\\
		\mathfrak{o}_n(\Ad[g]\sigma, \Ad[f]\sigma)+\sum_{i=0}^{n+1}(-1)^i\mathfrak{d}_n(\Ad[g']\sigma\circ d_i, \Ad[g]{\sigma}\circ d_i)=\\
		\mathfrak{O}_n(g)(\sigma)+\Fib(\sigma(0\rightarrow 1))_*\left(\mathfrak{D}_n(g',g)(d_0\sigma)\right)
		+\sum_{i=1}^{n+1}(-1)^i\mathfrak{D}_n(g',g)(d_i\sigma)=\\
		=	\mathfrak{O}_n(g)(\sigma) + \delta_n^n(\mathfrak{D}_n(g',g)).	
	\end{multline*}
	This proves (3).
\end{proof}

\begin{lem}\label{p:OneStepExt}
	Fix $n>0$ and assume that $H^{n+1}(\mathcal{C};\pi_n(\Fib))=0$. For every $g\in \mathcal{E}_0^n$ there exists $h\in \mathcal{E}^{n+1}_{n-1}(g)$.
\end{lem}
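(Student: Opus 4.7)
The plan is a standard obstruction-theoretic argument carried out entirely inside the cochain complex $(C^*_n,\delta^*_n)$ just constructed: starting from $g\in \mathcal{E}_0^n$, I will modify $g$ on the top skeleton $X_n$ (while keeping $g|_{X_{n-1}}$ fixed) to kill its obstruction cocycle, and then extend the modified map across $X_{n+1}$ using Proposition \ref{p:ObstrProp}(1).

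The first step is to observe that by Proposition \ref{p:ObstrProp}(2) the obstruction cochain $\mathfrak{O}_n(g)$ lies in $Z^{n+1}_n$. Since the cochain complex $(C^*_n,\delta^*_n)$ computes $H^*(\mathcal{C};\pi_n(\Fib))$, the hypothesis $H^{n+1}(\mathcal{C};\pi_n(\Fib))=0$ forces $Z^{n+1}_n=B^{n+1}_n$, so there exists $w\in C^n_n$ with $\delta^n_n(w)=\mathfrak{O}_n(g)$. Set $v:=-w$, so that $\delta^n_n(v)=-\mathfrak{O}_n(g)$.

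The second step is to realize $v$ as a difference cochain. Consider the restriction $g|_{X_{n-1}}\in \mathcal{E}_0^{n-1}$; then $g$ itself lies in $\mathcal{E}^n_{n-1}(g|_{X_{n-1}})$. Applying Proposition \ref{p:ObstrProp}(4) with indices shifted down by one (which is justified because its proof reduces to \ref{p:ObstrE}(4), applied simplex by simplex in a dimension-independent way) yields a map $g'\in \mathcal{E}^n_{n-1}(g|_{X_{n-1}})$ with $\mathfrak{D}_n(g',g)=v$. In particular $g'$ agrees with $g$ on $X_{n-1}$, and by Proposition \ref{p:ObstrProp}(3)
\[
    \mathfrak{O}_n(g')=\mathfrak{O}_n(g)+\delta^n_n(\mathfrak{D}_n(g',g))=\mathfrak{O}_n(g)+\delta^n_n(v)=0.
\]

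The final step is to invoke Proposition \ref{p:ObstrProp}(1) for $g'$: the vanishing $\mathfrak{O}_n(g')=0$ produces some $h\in \mathcal{E}^{n+1}_n(g')$. Since $g'|_{X_{n-1}}=g|_{X_{n-1}}$, the map $h$ also agrees with $g$ on $X_{n-1}$, i.e.\ $h\in \mathcal{E}^{n+1}_{n-1}(g)$, as required. The only real subtlety in the whole argument is the bookkeeping in the second step: Proposition \ref{p:ObstrProp} has been stated with a fixed top dimension $n+1$, so one must carefully check that its items (3) and (4) apply verbatim one dimension lower. This is unproblematic because both statements are formal consequences of the elementary \ref{p:ObstrE} together with \ref{e:AdComp}, neither of which depends on dimension.
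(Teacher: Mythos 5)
Your proof is correct and is essentially the paper's own argument: use \ref{p:ObstrProp}.(2) and the vanishing of $H^{n+1}(\mathcal{C};\pi_n(\Fib))$ to write $\mathfrak{O}_n(g)$ as a coboundary, realize the negative of that cochain as a difference cochain via \ref{p:ObstrProp}.(4) applied one dimension lower, kill the obstruction via \ref{p:ObstrProp}.(3), and conclude with \ref{p:ObstrProp}.(1). Your explicit remark that items (3) and (4) are being used with indices shifted down by one is exactly the (implicit) convention the paper relies on, so nothing further is needed.
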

\begin{proof}
	By \ref{p:ObstrProp}.(2) $\mathfrak{O}_n(g)\in Z_n^{n+1}$. Since $H^{n+1}_n=0$, there exists $u\in C^n_n$ such that $\delta_n^n(u)=\mathfrak{O}_n(g)$. Let $g'\in\mathcal{E}_{n-1}^{n}(g)$ be a map such that $\mathfrak{D}_n(g',g)=-u$; it exists by \ref{p:ObstrProp}.(4). By \ref{p:ObstrProp}.(3) we have
\[
	\mathfrak{O}_n(g')=\mathfrak{O}_n(g)+\delta_n^n(\mathfrak{D}_n(g',g))=\mathfrak{O}_n(g)+\delta_n^n(-u)=0.
\]	
	 The conclusion follows by \ref{p:ObstrProp}.(1).
\end{proof}

Finally, we are ready to prove the main theorem of this section:
\begin{proof}[Proof of {\ref{t:Obstr}}]
	We need to prove that the space $\mathcal{E}^\infty_0:=\lim\; \mathcal{E}^n_0$ is non-empty. Since $\{g_c\}_{c\in\Ob(\C)}\in\lim_{\C}\;[F,Y]$, the map $g_{0}=\coprod g_c:X_0\rightarrow Y$ extends to a map $X_1\rightarrow Y$, which can be adjusted (since $p$ is a fibration) to a map $g_1\in \mathcal{E}^1_0$. Using \ref{p:OneStepExt}  we inductively construct a sequence of maps $g_n$, $n\geq 2$, such that $g_n\in\mathcal{E}^n_{n-2}(g_{n-1})$. Put $g'_n:=g_{n+1}|_{X_n}$. Since $g'_n|_{X_{n-1}}=g'_{n-1}$, the sequence  $\{g'_n\}$ represents an element in $\mathcal{E}^\infty_0$.
\end{proof}

\section{Representations of $p$-stubborn subgroups of $U(n)$}

\label{s:RepresentationsOfStubbornSubgroups}

Let $m\in\Z$ be an integer such that $(m,n!)=1$. Recall (\cite{Su}, \cite{JMO}) that there exists a unique homotopy representation $\Psi^m:BU(n)\to BU(n)$ such that $\Psi^m|_{BT}$ is a map induced by the composition
\[
	T\ni x \mapsto x^m\in T\subseteq U(n).
\]
Let $\psi^m\in R(U(n))$ be the character of $\Psi^m$ and fix a prime integer $p$. In this Section we prove Proposition \ref{p:EpiSigma} which states that for every $p$-stubborn subgroup $P\subseteq U(n)$ and every $N_{U(n)}(P)/P$-orbit $X\subseteq \IR(P,\psi^m_P)$ the map $N_{U(n)}(P)/P\to\Sigma_X$ is surjective. 

Let us recall the classification of $p$-stubborn subgroups of unitary groups obtained by Oliver \cite{O}. Define $p\times p$-matrices 
\begin{equation}
	A=\begin{pmatrix}
	1 & 0 & 0 & \cdots & 0 \\	
	0 & \zeta & 0 & \cdots & 0 \\	
	0 & 0 & \zeta^2 & \cdots & 0 \\	
	\vdots & \vdots & \vdots & \ddots & \vdots \\
	0 & 0 & 0 & \dots & \zeta^{p-1} \\	
	\end{pmatrix}
	\;\;\;\;
	B=\begin{pmatrix}
	0 & 1 & 0 & \cdots & 0 \\	
	0 & 0 & 1 & \cdots & 0 \\	
	\vdots & \vdots & \vdots & \ddots & \vdots \\
	0 & 0 & 0 & \cdots & 1 \\	
	1 & 0 & 0 & \dots & 0 \\	
	\end{pmatrix}
\end{equation}
where $\zeta=e^{2\pi i/p}$. Let $I_k$ denote the $k\times k$ identity matrix. Define matrices $A^k_i,B^k_i\in U(p^k)$ for $i=0,\dots,k-1$ by
\begin{equation}
	A^k_i=I_{p^i}\otimes A\otimes I_{p^{k-i-1}},\;\;\;\;
	B^k_i=I_{p^i}\otimes B\otimes I_{p^{k-i-1}}.
\end{equation}
Finally, let
\begin{equation}
	\Gamma(k):=\langle S^1, A^k_0,\dots, A^k_{k-1},B^k_0,\dots, B^k_{k-1} \rangle \subseteq U(p^k).
\end{equation}
Note that $(A^k_i)^p=(B^k_i)^p=I$ and
\begin{equation}\label{e:GammaRel}
	A^k_i A^k_j =  A^k_j A^k_i,\quad 	B^k_i B^k_j =  B^k_j B^k_i, \quad
	B^k_i A^k_j=\begin{cases}
		\zeta\cdot A^k_j B^k_i & \text{for $i=j$}\\
		A^k_j B^k_i & \text{for $i\neq j$.}
	\end{cases}
\end{equation}
Furthermore, every element of $x\in \Gamma(k)$ can be written uniquely in the form
\begin{equation}\label{e:GammaPres}
	x=t\cdot A^k_{i_1}\dots A^k_{i_r} B^k_{j_1}\dots B^k_{j_s},
\end{equation}
where $t\in S^1$, and $0\leq i_1<\dots<i_r<k$, $0\leq j_1<\dots<j_s<k$.

Let $C_p$ denote the cyclic group of order $p$. We say that a subgroup of a unitary group is \emph{elementary $p$-stubborn} if it has the form
\[
	\Gamma(k;a_1,\dots,a_r):=\Gamma(k)\wr C_p^{a_1}\wr \dots \wr C_p^{a_r}\subseteq U(p^k)\wr C_p^{a_1}\wr \dots \wr C_p^{a_r} \subseteq U(p^{k+\sum a_i}).
\]
If $P=\Gamma(k;a_1,\dots,a_r)$ is an elementary $p$-stubborn subgroup, then \cite[Th. 6]{O}
\[N_{U(n)}(P)/P\simeq \Sympl_{2k}(\F_p)\times \GL_{a_1}(\F_p)\times\dots\times \GL_{a_r}(\F_p).
\]

\begin{prp}[{\cite[Th. 8]{O}}]\label{p:StubbornSubgroupOfUn}
	Every $p$-stubborn subgroup in $U(n)$ is conjugate to
	\[
		P:=P_1^{b_1}\times \dots \times P_j^{b_j}\subseteq U(p^{n_1})^{b_1}\times \dots\times U(p^{n_j})^{b_j}\subseteq U(n),
	\]
	where $\{P_i\subseteq U(p^{n_i})\}$ is a family of pairwise non-isomorphic elementary $p$-stubborn subgroups. Furthermore,
	\begin{equation}\label{e:WeylGroupOfGeneralStubborn}
		N_{U(n)}(P)/P\simeq \prod_{i=1}^j (N_{U(p^{n_i})}(P_i)/P_i)\wr \Sigma_{b_i}.\qed	
	\end{equation}
\end{prp}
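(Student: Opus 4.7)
The plan is to analyze the canonical $n$-dimensional representation $V=\mathbb{C}^n$ of $P\subseteq U(n)$ through its $P$-isotypic decomposition, and then to invoke the structural restrictions imposed by $p$-stubbornness. First decompose $V = \bigoplus_\alpha W_\alpha \otimes M_\alpha$, where $W_\alpha$ ranges over the distinct irreducible $P$-subrepresentations of $V$ and $M_\alpha$ is the corresponding multiplicity space of dimension $m_\alpha$. By Schur's lemma the connected subgroup $\mathbf{1}\otimes U(M_\alpha)$ centralizes $P$, hence lies in $N_{U(n)}(P)$. Since $P$ is $p$-stubborn, $N_{U(n)}(P)/P$ is discrete, and so the identity component satisfies $N_{U(n)}(P)^0=P^0$; because $\mathbf{1}\otimes U(M_\alpha)$ is connected and contains the identity, it is forced into $P^0$. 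But $P$ is $p$-toral, so $P^0$ is a torus and hence abelian, which forces $m_\alpha\le 1$. Thus every $P$-irreducible summand of $V$ occurs with multiplicity one.

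Writing $V=\bigoplus_\alpha V_\alpha$ with all multiplicities one and letting $P_\alpha \subseteq U(V_\alpha)$ denote the image of $P$ in the $\alpha$th factor, one has $P \subseteq \prod_\alpha P_\alpha$. A finer use of the stubbornness hypothesis---this time the absence of non-trivial normal $p$-subgroups in $N_{U(n)}(P)/P$, combined with Clifford-theoretic constraints on how $P$ can sit inside this product---should then force $P=\prod_\alpha P_\alpha$: a strict inclusion would produce, after a normalizer calculation, a non-trivial normal $p$-subgroup in $N_{U(n)}(P)/P$. Grouping the indices $\alpha$ by the conjugacy type of the pair $(P_\alpha \subseteq U(V_\alpha))$ and writing $b_i$ for the number of components of type $i$ yields the decomposition $P\cong P_1^{b_1}\times\cdots\times P_j^{b_j}$ with the $P_i$ pairwise non-isomorphic. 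The normalizer formula then follows: every $g\in N_{U(n)}(P)$ permutes the summands $V_\alpha$ preserving their conjugacy type, so can only interchange the $b_i$ summands of type $P_i$; this yields the $\Sigma_{b_i}$ factor, which together with the intrinsic normalizer of each $P_i$ inside the corresponding $U(p^{n_i})$ produces the wreath product $\prod_i(N_{U(p^{n_i})}(P_i)/P_i)\wr\Sigma_{b_i}$.

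Finally, one must identify the irreducible $p$-stubborn subgroups of $U(p^n)$ with the elementary groups $\Gamma(k;a_1,\dots,a_r)$. The idea is to analyze the centre: for an irreducibly-acting $P$ one has $Z(P)\subseteq S^1\cdot I$ by Schur, and $P/Z(P)$ is a finite $p$-group. When $P$ acts primitively, the central quotient is elementary abelian and carries a non-degenerate symplectic form coming from commutators, so $P$ is an extraspecial-type $p$-group realized on an atomic $p^k$-dimensional block as $\Gamma(k)$; imprimitive actions decompose the representation into blocks permuted transitively, and iterated Clifford induction accounts for the wreath factors $\wr C_p^{a_i}$, with stubbornness at each inductive step forcing the wreathing to be maximal.

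The main obstacle is this final classification of irreducible $p$-stubborn subgroups, together with the identification of their normalizers modulo themselves as $\Sympl_{2k}(\F_p)\times\prod\GL_{a_i}(\F_p)$. This requires a detailed structural analysis of extraspecial $p$-groups embedded in unitary groups and a careful Clifford-theoretic induction along the wreath tower; it constitutes the substantive content of Oliver's theorem, while the first two paragraphs of the plan are essentially bookkeeping once the irreducible case is settled.
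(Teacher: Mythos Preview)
The paper gives no proof of this proposition: it is quoted directly from \cite[Th.~8]{O} and closed with a \qed, so there is nothing in the present paper to compare your proposal against. What you have written is an attempt to reconstruct Oliver's argument rather than a variant of anything the authors do.

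As a sketch of that reconstruction it is reasonable. The multiplicity-one step in your first paragraph is correct and is indeed how the reduction begins. The second paragraph, however, contains an acknowledged gap: the passage from $P\subseteq\prod_\alpha P_\alpha$ to equality is left as a ``should then force,'' and this is where genuine work is needed (one must argue that a strict inclusion produces either a non-stubborn factor or a non-trivial normal $p$-subgroup in the quotient, and this is not automatic). Your final two paragraphs correctly identify that the classification of the irreducible $p$-stubborn pieces as the groups $\Gamma(k;a_1,\dots,a_r)$, together with the computation of their Weyl groups, is the substantive content of Oliver's theorem; you have outlined the shape of that argument but not carried it out, which is appropriate given that the paper itself simply imports the result.
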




\begin{prp}\label{p:IrrIsIrr}
	If $n=p^k$ then the representation $\psi^m_{\Gamma(k)}$ is irreducible.
\end{prp}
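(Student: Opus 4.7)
The plan is to identify $\psi^m_{\Gamma(k)}$ with the character of the standard $p^k$-dimensional irreducible representation of the Heisenberg-like group $\Gamma(k)$ whose central character on $S^1$ is $z \mapsto z^m$, by computing its values directly on $\Gamma(k)$.

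First, by Definition \ref{d:HtpChar}, $\psi^m|_T$ is $t \mapsto \mathrm{tr}(t^m)$, and since every element of $U(p^k)$ is conjugate to a torus element, $\psi^m$ is the class function $g \mapsto \mathrm{tr}(g^m)$ on $U(p^k)$. Thus $\psi^m_{\Gamma(k)}(g) = \mathrm{tr}(g^m)$ for every $g \in \Gamma(k)$. The relations (\ref{e:GammaRel}) together with the normal form (\ref{e:GammaPres}) show that $\Gamma(k)$ is a central extension $1 \to S^1 \to \Gamma(k) \to \mathbb{F}_p^{2k} \to 1$ whose commutator pairing on the quotient is (up to sign) the standard symplectic form with values in $\mu_p \subseteq S^1$; in particular this pairing is non-degenerate, so $S^1$ is precisely the center of $\Gamma(k)$.

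Next I compute $\psi^m_{\Gamma(k)}$ pointwise. On $S^1$: for $g = zI$ one has $\psi^m_{\Gamma(k)}(g) = \mathrm{tr}(z^m I) = p^k z^m$. Off $S^1$: since $\gcd(m,n!) = 1$ in particular $\gcd(m,p) = 1$, so multiplication by $m$ is invertible on $\mathbb{F}_p^{2k}$, hence $g \notin S^1$ implies $\overline{g^m} = m \bar g \neq 0$, i.e.\ $g^m \notin S^1$. It therefore suffices to show $\mathrm{tr}(x) = 0$ for every $x \in \Gamma(k) \setminus S^1$. This is the standard Heisenberg vanishing: non-degeneracy of the commutator pairing yields $h \in \Gamma(k)$ with $[h,x] = \zeta^a$ for some $a \not\equiv 0 \pmod p$, hence $hxh^{-1} = \zeta^a x$, and taking traces gives $\mathrm{tr}(x) = \zeta^a \mathrm{tr}(x)$, forcing $\mathrm{tr}(x) = 0$. (Concretely, using the normal form one can take $h \in \{A^k_i, B^k_i\}_{i}$ chosen so that (\ref{e:GammaRel}) produces a single $\zeta$-factor.)

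With $\psi^m_{\Gamma(k)}$ supported on the center $S^1$ and equal to $p^k z^m$ there, its $L^2$-norm is
\[
\|\psi^m_{\Gamma(k)}\|^2 = p^{-2k} \int_{S^1} |p^k z^m|^2 \, dz = p^{-2k} \cdot p^{2k} = 1,
\]
using that $S^1 \subseteq \Gamma(k)$ has normalized Haar measure $|\Gamma(k)/S^1|^{-1} = p^{-2k}$. Hence $\psi^m_{\Gamma(k)}$ is irreducible. I do not anticipate a significant obstacle: the only computational step of substance is the conjugation trick, which follows mechanically from (\ref{e:GammaRel}), and the coprimality $\gcd(m,p) = 1$ enters decisively (and only) to guarantee that the $m$-th power map does not send $\Gamma(k) \setminus S^1$ into the center.
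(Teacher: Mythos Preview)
Your proof is correct, but it follows a different route from the paper's. The paper constructs an explicit automorphism $\alpha$ of $\Gamma(k)$ (sending $z\in S^1$ to $z^m$, $A^k_i$ to $(A^k_i)^m$, and fixing each $B^k_i$) and then checks, using the normal form (\ref{e:GammaPres}), that $\tr(x^m)=\tr(\alpha(x))$ for all $x\in\Gamma(k)$; this identifies $\psi^m_{\Gamma(k)}$ with $\iota_{\Gamma(k)}\circ\alpha$, whence irreducibility follows from the irreducibility of $\iota_{\Gamma(k)}$, which is quoted from \cite{O}. Your argument instead computes the character $g\mapsto\tr(g^m)$ directly, proves the Heisenberg-type vanishing $\tr(x)=0$ for $x\notin S^1$ via the conjugation trick, and concludes by an $L^2$-norm computation. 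Your approach is more self-contained (it does not rely on \cite{O}; indeed for $m=1$ it reproves that result), while the paper's approach yields the extra structural statement that $\psi^m_{\Gamma(k)}$ is the twist of the standard inclusion by an automorphism of $\Gamma(k)$. Both arguments ultimately rest on the same two ingredients: non-degeneracy of the commutator pairing on $\Gamma(k)/S^1$ and the coprimality $\gcd(m,p)=1$.
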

\begin{proof}
	If $k=0$, then $\psi^m_{\Gamma(k)}$ has dimension 1. Otherwise, $p$ does not divide $m$. Define an automorphism $\alpha:\Gamma(k)\to \Gamma(k)$ by formulas
	\begin{itemize}
		\item{$\alpha(x)=x^m$ for $x\in S^1\cong Z(U(p^k))$,}
		\item{$\alpha(A^k_i)=A^k_{mi\;\mathrm{mod}\;p}=(A^k_i)^m$,}
		\item{$\alpha(B^k_i)=B^k_i$.}
	\end{itemize}
	By checking conditions (\ref{e:GammaRel}) we prove that $\alpha$ is well-defined, and since $(p,m)=1$ it is surjective. Let 
	\[x=t\cdot A^k_{i_1}\dots A^k_{i_r} B^k_{j_1}\dots B^k_{j_s}\in\Gamma(k),\]
	be the presentation (\ref{e:GammaPres}) of an arbitrary element of $\Gamma(k)$. If $s=0$ then $x^m=\alpha(x)$; for $s>0$ both $x^m$ and $\alpha^m(x)$ have no non-zero elements on the diagonal and then $\tr(x^m)=\tr(\alpha(x))=0$. Thus the composition $\iota_{\Gamma(k)}\circ \alpha$ is conjugate to $\psi^m_{\Gamma(k)}$ since they have equal characters. Now the conclusion follows from irreducibility of $\iota_{\Gamma(k)}$, which is proven in the proof of \cite[Th. 6]{O}, and surjectivity of $\alpha$.
\end{proof}

\begin{prp}\label{p:IdentityIsIrreducible}
	For every elementary $p$-stubborn subgroup $P$ of $U(n)$ the representation $\psi^m_P:P\subseteq U(n)$ is irreducible.
\end{prp}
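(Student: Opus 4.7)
The plan is to extend the argument of Proposition \ref{p:IrrIsIrr} to general elementary $p$-stubborn subgroups by inducting on the number $r$ of outer wreath factors of $P = \Gamma(k; a_1, \ldots, a_r)$. The base case $r = 0$ is exactly Proposition \ref{p:IrrIsIrr}. For the inductive step, I would write $P = P' \wr C_p^{a_r}$ with $P' = \Gamma(k; a_1, \ldots, a_{r-1}) \subseteq U(n')$, and promote the inductively constructed surjective endomorphism $\alpha'$ of $P'$ (satisfying $\iota_{P'} \circ \alpha' \cong \psi^m_{P'}$) to a surjective endomorphism $\alpha$ of $P$ by letting $\alpha$ act by $\alpha'$ coordinate-wise on $(P')^{p^{a_r}}$ and trivially on the $C_p^{a_r}$ factor. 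A short check with the semidirect-product multiplication of the wreath product confirms that $\alpha$ is a group homomorphism, and surjectivity is inherited from $\alpha'$.

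The key step is to show that $\iota_P \circ \alpha$ and $\psi^m_P$ have the same character on $P$. I would analyze an arbitrary element $x = ((g_1, \ldots, g_{p^{a_r}}); \sigma) \in P$ viewed as a block matrix in $U(n' p^{a_r})$. When $\sigma \neq 1$, the fact that the regular embedding $C_p^{a_r} \hookrightarrow \Sigma_{p^{a_r}}$ is fixed-point free makes all diagonal blocks of both $x$ and $\alpha(x)$ vanish; and since $\sigma$ has order dividing $p$ with $(m,p)=1$, the permutation part $\sigma^m$ of $x^m$ is also nontrivial, so $\tr(x^m)$ vanishes as well. When $\sigma = 1$, both $\alpha(x)$ and $x^m$ are block-diagonal and the trace identity reduces blockwise to the inductive hypothesis. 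Character equality then gives $\iota_P \circ \alpha \cong \psi^m_P$ as representations.

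Finally, since $\alpha(P) = P$, any $\iota_P \circ \alpha$-invariant subspace is also $\iota_P$-invariant, so $\iota_P \circ \alpha$ is irreducible whenever $\iota_P$ is; irreducibility of the defining representation $\iota_P$ of an elementary $p$-stubborn subgroup is established in the proof of \cite[Th. 6]{O}. I expect the main verification point, and the only genuine obstacle, to be the wreath-product bookkeeping in the key step: confirming both that $\alpha$ is a well-defined homomorphism and that raising $(g; \sigma)$ to the $m$-th power produces an element with the claimed block structure and trace. Both rest entirely on the explicit matrix description of $P' \wr C_p^{a_r}$ inside $U(n' p^{a_r})$ together with the hypothesis $(m, p) = 1$.
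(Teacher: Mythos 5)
Your proof is correct, but its inductive step takes a genuinely different route from the paper's. You extend the explicit-endomorphism trick of \ref{p:IrrIsIrr} to the whole wreath product: setting $\alpha=(\alpha')^{p^{a_r}}\rtimes\mathrm{id}$ and checking $\tr(\alpha(x))=\tr(x^m)$ blockwise does work (when the permutation coordinate $\sigma$ is nontrivial both traces vanish, since the regular $C_p^{a_r}$-action is fixed-point free and $\sigma^m\neq 1$ because $(m,p)=1$, which follows from $(m,n!)=1$ as $n\geq p$ once $r\geq 1$; when $\sigma=1$ the identity reduces to the inductive one), and it yields the slightly stronger, more explicit conclusion that $\psi^m_P\cong\iota_P\circ\alpha$ for a surjective endomorphism $\alpha$ of $P$. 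The paper argues differently: it restricts $\psi^m_P$ to the base subgroup $\Gamma(k;a_1,\dots,a_{r-1})^{p^{a_r}}$, notes that the restriction splits as the sum of the inductively irreducible representations $\psi^m_{P'}$ composed with the factor projections, which are pairwise non-isomorphic and permuted transitively (indeed freely) by the quotient $C_p^{a_r}$, and concludes via the Mackey irreducibility criterion and Frobenius reciprocity that $\psi^m_P$ is induced from a single summand, hence irreducible. The trade-off is in what must be imported from Oliver: the paper needs irreducibility of the defining representation only for $\Gamma(k)$ (inside \ref{p:IrrIsIrr}), whereas your argument needs irreducibility of $\iota_P$ for every elementary $p$-stubborn $P$, i.e.\ the $m=1$ instance of the very proposition being proved, as an external input. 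This is legitimate --- Oliver does establish irreducibility of the defining representations of the wreath products in the course of proving \cite[Th.\ 6]{O} --- but you should cite it deliberately, since without it your argument would have to reprove that case by exactly the restriction--induction argument the paper uses. Two minor bookkeeping points: your induction hypothesis (existence of $\alpha'$ with $\iota_{P'}\circ\alpha'\cong\psi^m_{P'}$) is contained in the \emph{proof} of \ref{p:IrrIsIrr} rather than its statement, and for $k=0$ one should take $\alpha'(z)=z^m$ on $S^1$; neither affects correctness.
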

\begin{proof}
	Induction with respect to $j$, where $P=\Gamma(k;a_1,\dots,a_j)$. The case $j=0$ is proven in \ref{p:IrrIsIrr}. To show the general case, note that
	\[
		\res^{\Gamma(k;a_1,\dots,a_j)}_{\Gamma(k;a_1,\dots,a_{j-1})^{p^{a_j}}}\psi^m_{\Gamma(k;a_1,\dots,a_j)}\simeq \bigoplus_{m=1}^{p^{a_j}}
		\psi^m_{\Gamma(k;a_1,\dots,a_{j-1})}\circ pr_m	,	
	\]
	where $pr_m$ is the projection on the $m$-th factor. The quotient $\Gamma(k;a_1\dots a_j)/\Gamma(k;a_1,\dots,a_{j-1})^{p^{a_j}}\cong C_p^{a_j}$ acts transitively on the summands at the right-hand side. Hence the representation induced from any single summand is irreducible and by Frobenius reciprocity it is isomorphic to $\psi^m_{\Gamma(k;a_1,\dots,a_j)}$.
\end{proof}

\begin{proof}[Proof of {\ref{p:EpiSigma}}]
	By \ref{p:StubbornSubgroupOfUn} we can assume that $P=\prod_{i=1}^j P_i^{b_i}$, where $P_i\subseteq U(p^{r_i})$ are pairwise non-isomorphic elementary $p$-stubborn subgroups. For $i=1,\dots,j$ and $k=1,\dots,b_i$ define a unitary representation of $P$ 
	\[
		\gamma_{i,k}= \iota_{P_i} \circ pr_{i,k},
	\]
	where $pr_{i,k}:P\to P_i$ is the projection onto $k$-th summand of type $P_i$.	Obviously 	$\iota_P\simeq \bigoplus_{i=1}^{j}\bigoplus_{k=1}^{b_i} \gamma_{i,k}$. As a consequence of \ref{p:IdentityIsIrreducible}, all representations $\gamma_{i,k}$ are irreducible; therefore $\IR(P,\iota_P)=\{\gamma_{i,k}\}_{i=1,\dots,j}^{k=1,\dots,b_i}$. For every $\eta\in N_{U(n)}(P)/P$ we have $\eta^*\gamma_{i,k}=\gamma_{i,p_i(\eta)(k)}$, where 
	\[
	p_{i}:N_{U(n)}(P)/P\rightarrow (N_{U(n_i)}(P_i)/P)\wr \Sigma_{b_i}\rightarrow \Sigma_{b_i}\simeq \Sigma_{\{\gamma_{i,k}\}_{k=1}^{b_i}}
	\] 
	is given by the projection on the $i$-th summand of \ref{e:WeylGroupOfGeneralStubborn}. Then every $N_{U(n)}(P)/P$-orbit of $\IR(P,\iota_P)$ has the form ${\{\gamma_{i,k}\}_{k=1}^{b_i}}$ and the conclusion follows from surjectivity of homomorphisms $p_i$.
\end{proof}

\section{Calculation of functors $\Lambda^*$}

\label{s:FunctorsLambdaSigma}

Fix a prime $p$. Let $X$ be a finite set and let $\Sigma_X$ be the group of permutations of $X$. In this Section we prove that $\Lambda^i(\Sigma_X;\Z_p^\wedge[X])=0$ for $i\geq 2$. To achieve this, we use results of Aschbacher, Kessar and Oliver \cite{AKO}. For a finite group $H$ let $O_p(H)$ be the maximal normal $p$-subgroup of $H$. Note that a $p$-subgroup $P\subseteq H$ is $p$-radical if and only if $P=O_P(N_H(P))$.
\begin{df}
	Let $\Gamma$ be a finite group. \emph{A radical $p$-chain of length $k$} in $\Gamma$ is a sequence 
	\[
		P_0\subsetneq P_1\subsetneq P_2\subsetneq \dots\subsetneq P_k \subseteq \Gamma
	\]
	of distinct $p$-subgroups of $\Gamma$ such that
	\begin{itemize}
		\item{$P_0=O_p(\Gamma)$,}
		\item{$P_i=O_p(N_\Gamma(P_0)\cap N_\Gamma(P_1)\cap \dots\cap N_\Gamma(P_i))$,}
		\item{$P_k$ is a $p$-Sylow subgroup of $N_\Gamma(P_0)\cap N_\Gamma(P_1)\cap \dots\cap N_\Gamma(P_{k-1}))$.}
	\end{itemize}
\end{df}

\begin{lem}\label{l:AKO}
	Fix a finite group $\Gamma$, and a finitely generated $\Z_p^\wedge[\Gamma]$-module $M$. Assume, for some $k\geq 1$, that $\Lambda^k(\Gamma;M)\neq 0$. Then there is a radical $p$-chain
	\[
		1=P_0\subsetneq P_1\subsetneq P_2\subsetneq \dots\subsetneq P_k \subseteq \Gamma
	\]
	such that $M/pM$, regarded as an $\F_p[P_k]$-module, contains a copy of the free module $\F_p[P_k]$.
\end{lem}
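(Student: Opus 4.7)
The plan is to compute $\Lambda^k(\Gamma;M)=H^k(\mathcal{O}_p(\Gamma);F_M^\Gamma)$ through the bar cochain complex of the orbit category, reduce to a subcomplex supported on radical chains following \cite{AKO}, and then extract the freeness of $\F_p[P_k]$ in $M/pM$ using modular representation theory.

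First, I would unwind the definition. A normalized bar $k$-cochain of $\mathcal{O}_p(\Gamma)$ with coefficients in $F_M^\Gamma$ assigns to every chain of non-identity morphisms
\[
\Gamma/P_0 \to \Gamma/P_1 \to \dots \to \Gamma/P_k
\]
a value in $F_M^\Gamma(\Gamma/P_0)$. Since $F_M^\Gamma$ is supported at $\Gamma/1$, such cochains vanish off chains with $P_0=1$, and their remaining values lie in $M$. Any non-trivial cocycle representing a class in $\Lambda^k(\Gamma;M)$ therefore lives on chains of this form.

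Next, I would invoke the radical reduction, which is the central input from \cite{AKO}: the inclusion of the subcomplex spanned by cochains supported on radical $p$-chains into the full bar complex is a quasi-isomorphism, established by an acyclic-carrier or Quillen-type argument showing that after fixing a radical chain, the poset of non-radical refinements is contractible. Combined with the previous paragraph, any non-trivial class in $\Lambda^k(\Gamma;M)$ is represented by a cocycle $\varphi$ supported on radical chains starting at $P_0=1$. Because the first defining condition of a radical chain demands $P_0 = O_p(\Gamma)$, this forces $O_p(\Gamma)=1$, and in particular there is at least one radical chain $1=P_0\subsetneq P_1\subsetneq\dots\subsetneq P_k$ on which $\varphi$ is non-zero.

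Finally, to extract the freeness condition, I would reduce mod $p$ and observe that the chain automorphisms inside $\mathcal{O}_p(\Gamma)$ induce a natural $P_k$-action on the value $\overline{\varphi}(P_0,\dots,P_k)\in M/pM$; the third radicality condition, which makes $P_k$ a $p$-Sylow of the joint normalizer stabilizing the chain, is what realizes this action. A standard Tate-cohomology argument, namely that a finitely generated $\F_p[P_k]$-module is projective if and only if its Tate cohomology vanishes, then forces the cyclic $P_k$-submodule of $M/pM$ generated by $\overline{\varphi}(P_0,\dots,P_k)$ to be free of rank one, giving the desired embedding $\F_p[P_k]\hookrightarrow M/pM$. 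The hard part is the radical reduction step, whose categorical acyclicity argument is the nontrivial content supplied by \cite{AKO}; the rest is routine modular representation theory once that subcomplex description is in hand.
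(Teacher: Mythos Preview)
The paper does not prove this statement; its entire proof is the sentence ``It is a special case of \cite[Lemma~5.27]{AKO}.'' So you are not matching the paper's argument but rather attempting to unpack the cited result.

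Your steps 1 and 2 are in the right spirit and do track the architecture of the argument in \cite{AKO}: the bar cochains for $H^*(\mathcal{O}_p(\Gamma);F_M^\Gamma)$ are supported on chains beginning at $\Gamma/1$, and the passage to the subcomplex indexed by radical $p$-chains is indeed the structural input (due to Grodal and to \cite{AKO}) that you correctly identify as nontrivial. Noting that $P_0=1$ then forces $O_p(\Gamma)=1$ is also correct.

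Step 3, however, has a real gap. First, the phrase ``chain automorphisms inside $\mathcal{O}_p(\Gamma)$ induce a natural $P_k$-action on the value $\overline{\varphi}(P_0,\dots,P_k)$'' is muddled: that value is a single element of $M/pM$, and the automorphism group of the chain in $\mathcal{O}_p(\Gamma)$ is $\bigcap_i N_\Gamma(P_i)$ acting through $\Gamma/1$, not $P_k$. The $P_k$-module structure on $M/pM$ is simply restriction from $\Gamma$; the cocycle condition does not by itself single out a $P_k$-equivariance of the kind you describe. Second, and more seriously, your Tate argument runs in the wrong direction. You invoke ``projective $\Leftrightarrow$ Tate cohomology vanishes'' and then assert that the cyclic $P_k$-submodule generated by $\overline{\varphi}(P_0,\dots,P_k)$ must be free. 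But a nonzero element of an $\F_p[P_k]$-module can perfectly well generate a non-free cyclic submodule; nothing you have written excludes this. What the argument in \cite{AKO} actually does is identify the contribution of the chain $(P_0,\dots,P_k)$ to $\Lambda^k$ with a specific subquotient of $M$ (fixed points modulo sums of transfers from proper subgroups, twisted by a Steinberg-type module for the chain normalizer), and it is the nonvanishing of \emph{that} subquotient which is equivalent to $M/pM$ containing a free $\F_p[P_k]$-summand. That identification is not routine; it is part of the content of \cite[Lemma~5.27]{AKO}, and your sketch does not supply it.
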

\begin{proof}
	It is a special case of \cite[Lemma 5.27]{AKO}.
\end{proof}

\begin{prp}\label{p:RankOfRadical}
	Assume that $X$ is a finite set having $n$ elements and that $P\subseteq \Sigma_X$ is a $p$-radical subgroup. Then
	\[
		|P|\geq n-|X^P|.
	\]
\end{prp}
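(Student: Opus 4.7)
The plan is to reduce to the fixed-point-free case and then analyse the action of $P$ on its orbits through the structure of the centralizer $C_{\Sigma_X}(P)$.

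First I would observe that $P$ is also $p$-radical in $\Sigma_Y$, where $Y=X\setminus X^P$. Indeed, $\Sigma_{X^P}$ commutes with $P$ and is preserved setwise by every element of $N_{\Sigma_X}(P)$, so the normalizer splits as $N_{\Sigma_X}(P)=\Sigma_{X^P}\times N_{\Sigma_Y}(P)$. Taking $O_p$ and using the radicality hypothesis $P=O_p(N_{\Sigma_X}(P))$, together with $P\cap \Sigma_{X^P}=1$, forces $O_p(\Sigma_{X^P})=1$ and $P=O_p(N_{\Sigma_Y}(P))$. Thus one may replace $X$ by $Y$ and reduce the statement to: if $P\subseteq \Sigma_X$ is $p$-radical and $X^P=\emptyset$, then $|P|\geq |X|$.

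Next I would decompose $X$ into $P$-orbits $X=X_1\sqcup\cdots\sqcup X_r$ with $|X_i|=p^{e_i}$, $e_i\geq 1$, and denote by $P_i\subseteq \Sigma_{X_i}$ the image of $P$ under restriction. Each $P_i$ is transitive, hence $|P_i|\geq p^{e_i}$. The case $r=1$ is immediate. For $r\geq 2$ the heart of the argument is the structural claim that $P$ equals the full direct product $P_1\times\cdots\times P_r$ inside $\Sigma_{X_1}\times\cdots\times \Sigma_{X_r}=\Sigma_X$. This follows from the Alperin--Fong classification of the $p$-radical subgroups of $\Sigma_n$; alternatively it can be argued directly: radicality implies $O_p(C_{\Sigma_X}(P))=Z(P)$, because $C_{\Sigma_X}(P)\lhd N_{\Sigma_X}(P)$ has characteristic $p$-subgroup contained in $O_p(N_{\Sigma_X}(P))\cap C_{\Sigma_X}(P)=P\cap C_{\Sigma_X}(P)=Z(P)$. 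The explicit description $C_{\Sigma_X}(P)\cong \prod_t (N_P(H_t)/H_t)\wr \Sigma_{m_t}$, indexed by $P$-isomorphism types of orbits with multiplicities $m_t$ and stabilizers $H_t$, then shows that a proper subdirect product $P\subsetneq \prod_i P_i$ would manufacture normal $p$-elements of $C_{\Sigma_X}(P)$ outside $Z(P)$, contradicting the identity above.

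Granted the structural claim, the conclusion follows by the chain of inequalities
\[
|P|=\prod_{i=1}^r |P_i|\geq \prod_{i=1}^r p^{e_i}\geq \sum_{i=1}^r p^{e_i}=|X|,
\]
where the last step uses $p^{e_i}\geq 2$ and the elementary observation that $\prod a_i\geq \sum a_i$ for integers $a_i\geq 2$, proved by induction from $ab\geq a+b$ whenever $a,b\geq 2$.

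The main obstacle is the structural identification $P=P_1\times\cdots\times P_r$: building it from scratch demands delicate bookkeeping of which $p$-parts of the wreath-product factors of $C_{\Sigma_X}(P)$ must lie in $Z(P)$, and which cannot. Invoking the Alperin--Fong classification as a black box dispatches this step immediately, at the cost of an external reference; a self-contained derivation from $O_p(C_{\Sigma_X}(P))=Z(P)$ requires a case analysis along the orbit-type decomposition.
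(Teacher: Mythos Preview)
Your proof is correct and, at its core, the same as the paper's: both invoke the Alperin--Fong classification of $p$-radical subgroups of $\Sigma_n$ to obtain a direct-product decomposition of $P$ along its orbits, observe that each transitive factor on a block of size $p^e$ has order at least $p^e$, and finish with the elementary inequality $\prod a_i\ge\sum a_i$ for integers $a_i\ge 2$. Your preliminary reduction to the fixed-point-free case (via $N_{\Sigma_X}(P)=\Sigma_{X^P}\times N_{\Sigma_Y}(P)$) is a cosmetic reorganisation; the paper instead keeps the fixed points as the factor $A(\emptyset)^{m(\emptyset)}$ and subtracts $m(\emptyset)=|X^P|$ at the end.

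The one genuinely new ingredient you sketch is the attempt to replace Alperin--Fong by the observation $O_p\bigl(C_{\Sigma_X}(P)\bigr)=Z(P)$ together with the explicit wreath-product description of $C_{\Sigma_X}(P)$. As you yourself note, turning this into a complete proof that $P=\prod_i P_i$ requires a case analysis that you have not carried out, so the argument as written still rests on the same external reference as the paper's.
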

\begin{proof}
	Let $\mathfrak{S}$ be the set of all finite (possibly empty) sequences of positive integers. For all  $\mathbf{k}=(k_1,\dots,k_r)\in\mathfrak{S}$ define inductively subgroups $A(\mathbf{k})\subseteq \Sigma_{p^{|\mathbf{k}|}}$, $|\mathbf{k}|:=k_1+\dots+k_r$, by $A(\emptyset)=1\subseteq\Sigma_1$, and
\[
	A(k_1,\dots,k_r)=A(k_1,\dots,k_{r-1})\wr C_p^{k_r}\subseteq \Sigma_{p^{k_1+\dots+k_{r-1}}}\wr C_{p}^{k_r}\subseteq \Sigma_{p^{|\mathbf{k}|}}.
\]
	Following \cite{AF}, every $p$-radical subgroup of $\Sigma_X$ has the form 
	\[
	P=\prod_{\mathbf{k}\in\mathfrak{S}}A(\mathbf{k})^{m(\mathbf{k})}\subseteq \prod_{\mathbf{k}\in\mathfrak{S}} \left(\Sigma_{p^{|\mathbf{k}|}}\right)^{m(\mathbf{k})}\subseteq \Sigma_n\simeq \Sigma_X,
	\]
where $n=\sum m(\mathbf{k})p^{|\mathbf{k}|}$. Notice that $|A(\mathbf{k})|\geq p^{|\mathbf{k}|}$ and that $A(\emptyset)$ is a trivial group. Thus
\[
	|P|=\prod_{\mathbf{k}\in\mathfrak{S}\setminus\{\emptyset\}} |A(\mathbf{k})|^{m(\mathbf{k})}\geq \prod_{\mathbf{k}\in\mathfrak{S}\setminus\{\emptyset\}} p^{|\mathbf{k}|\cdot m(\mathbf{k})}\geq \prod_{\mathbf{k}\in\mathfrak{S}\setminus\{\emptyset\}} m(\mathbf{k})\cdot p^{|\mathbf{k}|}=n-m(\emptyset).
\]
Since $|X^P|=m(\emptyset)$, the conclusion follows.
\end{proof}

\begin{prp}\label{p:ChainSize}
	Let $X$ be a finite set having $n$ elements. Assume that 
	\[
		1=P_0\subsetneq P_1\subsetneq P_2\subsetneq \dots\subsetneq P_k \subseteq \Sigma_X
	\]
	is a radical $p$-chain of length $k\geq 2$. Then either $|P_k|>n$, or $p=2$, $n=5$ and this chain is conjugate to 
	\[
		1=P_0\subsetneq P_1 =\langle a \rangle \subsetneq P_2 =\langle a,b \rangle \subseteq\Sigma_X,
	\]
	where $a, b \in\Sigma_X$ are disjoint cycles of length 2.
\end{prp}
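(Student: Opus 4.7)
The plan is a case analysis on the structure of $P_1$, using the Alperin--Fong classification of $p$-radical subgroups of $\Sigma_X$ recalled in the proof of Proposition \ref{p:RankOfRadical}. The chain axiom at $i=1$ reduces to $P_1=O_p(N_{\Sigma_X}(P_1))$, so $P_1$ is itself $p$-radical in $\Sigma_X$ and is therefore conjugate to $\prod_i A(\mathbf{k}_i)^{m_i}$ with $m_0:=n-\sum_i m_i p^{|\mathbf{k}_i|}$ fixed points. Radicality moreover forces $O_p(\Sigma_{m_0})=1$ and $O_p(\Sigma_{m_i})=1$ for every $i$, since $O_p(N\wr\Sigma_m)$ strictly contains $O_p(N)^m$ whenever $O_p(\Sigma_m)\neq 1$ (one simply adjoins the normal subgroup $O_p(N)^m\rtimes O_p(\Sigma_m)$).

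Next I would observe that $P_k$ is by definition a Sylow $p$-subgroup of $\bigcap_{j<k}N_{\Sigma_X}(P_j)\subseteq N_{\Sigma_X}(P_1)$, while each strict inclusion in the chain multiplies the order by a factor of at least $p$, so $|P_k|\geq p^{k-1}|P_1|\geq p^k$. Using the explicit decomposition
\[
N_{\Sigma_X}(P_1)\;\cong\;\Sigma_{m_0}\times\prod_i\bigl(N_{\Sigma_{p^{|\mathbf{k}_i|}}}(A(\mathbf{k}_i))\wr\Sigma_{m_i}\bigr),
\]
lower-bounding $|P_k|$ reduces to wreath-product bookkeeping; for $k=2$ one has in fact $|P_2|=|\mathrm{Syl}_p(N_{\Sigma_X}(P_1))|$ on the nose.

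The case analysis then splits three ways. (i) If some $\mathbf{k}_i$ has length $\geq 2$, strict wreath growth yields $|A(\mathbf{k}_i)|\geq p^{|\mathbf{k}_i|+1}$, and this surplus $p$-factor already forces $|P_k|>n$. (ii) If every $\mathbf{k}_i=(r_i)$ has length one but some $r_i\geq 2$, the unipotent subgroup of $\mathrm{GL}_{r_i}(\mathbb{F}_p)\subseteq N_{\Sigma_{p^{r_i}}}(A((r_i)))/A((r_i))$ contributes an extra factor of $p^{r_i(r_i-1)/2}$ to the Sylow count, again exceeding $n$. (iii) The tight case is $P_1=C_p^m=A((1))^m$, where $N_{\Sigma_p}(C_p)/C_p\cong C_{p-1}$ contributes no $p$-power, giving
\[
|P_2|\;=\;p^m\cdot|\mathrm{Syl}_p(\Sigma_m)|\cdot|\mathrm{Syl}_p(\Sigma_{m_0})|\qquad\text{while}\qquad n\;=\;mp+m_0.
\]

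The main obstacle is the arithmetic of case (iii). Using Legendre's formula $|\mathrm{Syl}_p(\Sigma_j)|=p^{(j-s_p(j))/(p-1)}$ together with the radicality constraints on $m_0$ and $m$, a direct case check shows that the inequality $|P_2|>n$ holds in every subcase with precisely one exception: $p=2$, $m=1$, $m_0=3$, which yields $n=5$ and $|P_2|=4$. In this exception $P_1=\langle a\rangle$ is a single transposition and $P_2=\mathrm{Syl}_2(N_{\Sigma_5}(\langle a\rangle))=\langle a,b\rangle$ with $b$ a disjoint transposition on the three points fixed by $a$, matching the stated chain; computing $N_{\Sigma_5}(P_1)\cap N_{\Sigma_5}(P_2)$ explicitly shows that no extension $P_3\supsetneq P_2$ exists, so $k=2$ is forced in this exceptional case. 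For $k\geq 3$ in any other configuration, the same case analysis together with the bound $|P_k|\geq p^{k-1}|P_1|\geq p^k$ leaves no room for additional exceptional chains.
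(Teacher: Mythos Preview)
Your approach is genuinely different from the paper's, and the $k=2$ part is essentially sound, but the extension to $k\geq 3$ has a real gap.

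\textbf{Comparison.} The paper never classifies $P_1$. Instead it tracks fixed-point sets $X_i=X^{P_i}$ and observes that $\bigl(\bigcap_{j<i}N_{\Sigma_X}(P_j)\bigr)/P_{i-1}$ splits off a full symmetric factor $\Sigma_{X_{i-1}}$. The projection of $P_i/P_{i-1}$ to that factor is $p$-radical there, so Proposition~\ref{p:RankOfRadical} gives $|P_i:P_{i-1}|\geq \max\{p,\,|X_{i-1}\setminus X_i|\}$ for \emph{every} $i$. Multiplying and using $\sum_i|X_{i-1}\setminus X_i|>n-p$ yields $|P_k|>n$ uniformly in $k$, with the small-case exceptions isolated at the end. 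Your route---classifying $P_1$ by Alperin--Fong and computing $|\mathrm{Syl}_p(N_{\Sigma_X}(P_1))|$ exactly---is more explicit and gives the $k=2$ case cleanly once cases (i) and (ii) are filled in (they are currently only sketched: e.g.\ in case~(i) a single surplus $p$-factor in one block does not by itself dominate $m_0+\sum m_ip^{|\mathbf k_i|}$ without further argument). The paper's method, by contrast, needs no case split on $P_1$ and treats all $k\geq 2$ at once.

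\textbf{The gap.} Your last sentence does not prove the $k\geq 3$ case. For chains of length $k\geq 3$, the group $P_2$ is $O_p\bigl(N(P_1)\cap N(P_2)\bigr)$, not a Sylow $p$-subgroup of $N(P_1)$, so your exact Sylow computation no longer applies to any term of the chain. The only bound you retain is $|P_k|\geq p^{k-1}|P_1|$, and this is far too weak: take $p=2$, $n$ large, and $P_i=\langle(1\,2),(3\,4),\dots,(2i{-}1\,\,2i)\rangle$ for $i=1,\dots,k-1$ (this satisfies the radical-chain conditions whenever $n-2i\notin\{2,4\}$). Then $|P_1|=2$ and $p^{k-1}|P_1|=2^{k}$, which says nothing about $|P_k|$ versus $n$ when, say, $n=100$ and $k=5$. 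Of course in any such example $P_k=\mathrm{Syl}_2\bigl(\bigcap_{j<k}N(P_j)\bigr)$ turns out to be enormous, but that requires a new computation at level $k$, not a reduction to your $k=2$ analysis. The paper's step-by-step inequality is exactly what supplies this missing control.
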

\begin{proof}
	We can assume that $n\geq p^2$ since otherwise all $p$-subgroups of $\Sigma_X$ have orders less or equal $p$. Denote $X_i=X^{P_i}$, $Y_i=X\setminus X_i$; obviously $X_0=X$. As a consequence of the definition of radical $p$-chain (cf.\ \cite{AKO}), $P_i/P_{i-1}$ is a $p$-radical subgroup of the group
	\begin{equation*}
		\left(\bigcup_{j=0}^{i-1 }N_{\Sigma_X}(P_j)\right)/P_{i-1}= \left(\bigcup_{j=0}^{i-1 }N_{\Sigma_{Y_{i-1}}}(P_j)\right)/P_{i-1} \times \Sigma_{X_{i-1}}.
	\end{equation*}
	Hence, by \cite[Th. 1.6.(ii)]{JMO} it is a product $H\times H'$ of $p$-radical subgroups
	\[
		H\subseteq  \left(\bigcup_{j=0}^{i-1 }N_{\Sigma_{Y_{i-1}}}(P_j)\right)/P_{i-1} \quad\text{and}\quad H'\subseteq \Sigma_{X_{i-1}}.
	\]
	Clearly $X_i=X_{i-1}^{H'}$ and by \ref{p:RankOfRadical} $|P_i:P_{i-1}|\geq |H'|\geq |X_{i-1}\setminus X_{i}|$. Thus 
	\[
		|P_k|=\prod_{i=1}^k|P_i:P_{i-1}|\geq \prod_{i=1}^k\max\{p,|X_{i-1}\setminus X_{i}|\}.
	\]
	Since $\sum_{i=1}^k |X_{i-1}\setminus X_i|=|X_0\setminus X_k|> n-p$, we obtain $P_k>n$ unless $p=2$ and $n\in\{4,5\}$. But the case $n=4$ is excluded since $\Sigma_4$ is not $2$-reduced, and the only possibility for $n=5$ is stated above.
\end{proof}

\begin{proof}[Proof of {\ref{p:LambdaSigma}}]
	Without loss of generality we can assume that $X=\{x_1,\dots,x_n\}$. Assume that \hbox{$\Lambda^i(\Sigma_X; \Z_p^\wedge[X])\neq 0$} for some $i\geq 2$. From \ref{l:AKO} follows that there exists a radical $p$-chain $P_0\subsetneq \dots\subsetneq P_i$ in $\Sigma_n$ which admits a $P_i$-equivariant monomorphism $\varphi:\F_p[P_i]\to\F_p\{x_1,\dots,x_n\}$. By \ref{p:ChainSize} and dimensional reasons, $(i,p,n)=(2,2,5)$ and $P_2\subseteq\Sigma_5$ is a subgroup generated by two disjoint cycles of length 2. We may assume that $P_2=\langle a,b\rangle$, where $a$ (resp.\ $b$) is the transposition which exchanges $x_1$ and $x_2$ (resp.\ $x_3$ and $x_4$). Let $c_1,\dots,c_5\in\F_2$ be elements such that $\varphi(e)=\sum_{i=1}^5 c_ix_i$, where $e\in P_2$ is the identity element. We have 
	\begin{multline*}
		\varphi(e-a-b+(ab))=(c_1x_1+c_2x_2+c_3x_3+c_4x_4+c_5x_5)-(c_2x_1+c_1x_2+c_3x_3+c_4x_4+c_5x_5)\\
		-(c_1x_1+c_2x_2+c_4x_3+c_3x_4+c_5x_5) +(c_2x_1+c_1x_2+c_4x_3+c_3x_4+c_5x_5)=0,
	\end{multline*}
	which contradicts injectivity of $\varphi$. 
\end{proof}

\section{Homotopy actions}

\label{s:HomotopyActions}

\def\Rep{\mathbf{Rep}}
\def\HSp{\mathbf{HSp}}
Let $\Rep$ be the category of compact Lie groups and their representations, i.e.\ 
\[
\Mor_{\Rep}(G,H):=\Hom(G,H)/\Inn(H).
\]
Fix a finite group $N$; it will be regarded as the category with a single object having $N$ as the group of automorphisms.

\begin{df}\label{d:hNActions}
	\emph{A left (resp.\ right) homotopy $N$-action} (or \emph{an h-$N$-action} in short) on a compact Lie group $\Gamma$ is a functor $N\to \Rep$ (resp. $N^{op}\to\Rep$) whose value on the single object of $N$ is $\Gamma$. Similarly, \emph{a left (resp. right) h-$N$-action} on a space $X$ is a functor $N\to \HSp$ (resp. $N^{op}\to\HSp$) with value $X$. A group $\Gamma$ (resp. a space $X$) equipped with a left (resp. right) h-$N$-action will be called a left h-$N$-group (resp. right h-$N$-group, left h-$N$-space, right h-$N$-space). 	Let $N\text{-}\Rep$ (resp.\ $N^{op}$-$\Rep$, $N$-$\HSp$, $N^{op}$-$\HSp$) denote the category of left h-$N$-groups (resp.\ right h-$N$-groups, left h-$N$-spaces, right h-$N$-spaces), where morphisms are natural transformations of functors. Either a homomorphism or a map is called \emph{h-$N$-equivariant} (resp.\ h-$N$-equivalence) if it represents a morphism (resp.\ an isomorphism) in the suitable category, i.e.\ if it preserves an h-$N$-action.
\end{df}

Note that a map is an h-$N$-equivalence if and only if it is  both h-$N$-equivariant and a homotopy equivalence. The classifying space functor maps conjugate homomorphisms into homotopic maps. Thus, it defines functors $B:N\text{-}\Rep\to N\text{-}\HSp$ and $B:N^{op}\text{-}\Rep\to N^{op}\text{-}\HSp$.

\begin{df}
	Let $K$ be a left h-$N$-group and let $H$ be a compact Lie group. We say that a homomorphism $\alpha:K\to H$ is \emph{h-$N$-invariant} if for every $\eta\in N$ the representations $[\alpha]$ and $[\alpha]\circ \eta$ are equal (as morphisms in $\Rep$).
\end{df}

\subsection*{Functoriality of centralizers}

Let $H$ be a compact Lie group, $P$ a $p$-toral left \hbox{h-$N$-group}, and $\alpha:P\to H$ an h-$N$-invariant homomorphism. For a homomorphism $\gamma:P\to P$ representing the image of $\eta\in N$ in $\Out(N)$ there exists $b_\gamma\in H$ such that the diagram
\begin{equation}\label{e:CentrFunc}
	\begin{diagram}
		\node{P}
			\arrow{e,t}{\alpha}
			\arrow{s,l}{\gamma}
		\node{H}
			\arrow{s,r}{h\mapsto b_{\gamma}^{-1}hb_{\gamma}}
	\\
		\node{P}
			\arrow{e,t}{\alpha}
		\node{H}
	\end{diagram}
\end{equation}
is strictly commutative. The formula
\begin{equation}\label{e:CInducedMap}
	C_H(\alpha(P))\ni h \mapsto b_\gamma h b_\gamma^{-1}\in C_H(\alpha(P))
\end{equation}
defines a homomorphism which does not depend, up to conjugacy, on the choice of representative $\gamma$. To prove independence (up to conjugacy again) on the choice of element $b_\gamma$, fix another $b'_\gamma\in H$ such that $(b'_\gamma)^{-1}\alpha(g)b'_\gamma=\alpha(\gamma(g))$ for all $g\in P$. It is enough to prove that $b'_\gamma b_\gamma^{-1}\in C_H(\alpha(P))$, which follows from the following calculation:
\[
	(b'_\gamma b_\gamma^{-1})^{-1}\alpha(g)(b'_\gamma b_\gamma^{-1})= b_\gamma ((b'_\gamma)^{-1}\alpha(g) b'_\gamma) b_\gamma^{-1}=b_\gamma ((b_\gamma)^{-1}\alpha(g) b_\gamma) b_\gamma^{-1}=\alpha(g).
\]
Thus, the formula (\ref{e:CInducedMap}) defines a right h-$N$-action on $C_H(\alpha(P))$.

\begin{prp}\label{p:DZNFunc}
	Let $P$ be a $p$-toral left h-$N$-group, H a compact Lie group, and $\alpha:P\to H$ an h-$N$-invariant homomorphism. The map (cf.\ \ref{t:DZN})
	\[
		(ad_\alpha)_p^\wedge: BC_{H}(\alpha(P))_p^\wedge\to (\map(BP, BH)_{B\alpha})_p^\wedge
	\]
	is an equivalence of right h-$N$-spaces.
\end{prp}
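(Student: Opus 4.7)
The plan is to observe that the Dwyer--Zabrodsky--Notbohm theorem \ref{t:DZN} already supplies that $(ad_\alpha)_p^\wedge$ is a homotopy equivalence, so the only new content is the h-$N$-equivariance, i.e.\ that $(ad_\alpha)_p^\wedge$ defines a morphism in $N^{op}\text{-}\HSp$. Since $N$ is a single-object category, unwinding the definition of a natural transformation reduces the problem to showing that for every $\eta\in N$ represented by $\gamma\colon P\to P$, and for any choice of $b_\gamma\in H$ satisfying (\ref{e:CentrFunc}), the square
\[
\begin{diagram}
\node{BC_H(\alpha(P))_p^\wedge}
   \arrow{e,t}{(ad_\alpha)_p^\wedge}
   \arrow{s,l}{B(h\mapsto b_\gamma h b_\gamma^{-1})_p^\wedge}
\node{(\map(BP,BH)_{B\alpha})_p^\wedge}
   \arrow{s,r}{((-)\circ B\gamma)_p^\wedge}
\\
\node{BC_H(\alpha(P))_p^\wedge}
   \arrow{e,t}{(ad_\alpha)_p^\wedge}
\node{(\map(BP,BH)_{B\alpha})_p^\wedge}
\end{diagram}
\]
commutes up to homotopy. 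Since $p$-completion is a functor on $\HSp$, it suffices to prove commutativity of the analogous square before completion.

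Next, I would pass through the mapping-space adjunction: by definition, $ad_\alpha$ is the adjoint of the map $BC_H(\alpha(P))\times BP\to BH$ induced by the multiplication homomorphism $m_\alpha\colon C_H(\alpha(P))\times P\to H$, $(a,b)\mapsto a\alpha(b)$. Consequently, the two routes around the square are adjoint to two maps $BC_H(\alpha(P))\times BP\to BH$ which are themselves induced by two homomorphisms of compact Lie groups into $H$, and to prove they are homotopic it suffices to exhibit an element of $H$ conjugating one to the other.

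The explicit computation is then immediate. The composition traversing the square via the upper-right corner is adjoint to the homomorphism $(a,b)\mapsto a\cdot \alpha(\gamma(b))=a\cdot b_\gamma^{-1}\alpha(b)b_\gamma$, while the composition traversing via the lower-left corner is adjoint to $(a,b)\mapsto (b_\gamma a b_\gamma^{-1})\cdot\alpha(b)$. Conjugating the first expression by $b_\gamma\in H$ yields the second, so the two homomorphisms represent the same class in $\Rep(C_H(\alpha(P))\times P,H)$ and thus induce homotopic maps on classifying spaces.

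The main ``obstacle'' here is really only bookkeeping: one has to be careful that the homotopy produced depends, up to homotopy, neither on the choice of representative $\gamma$ for $\eta$ nor on the choice of $b_\gamma$. For the first, any two representatives differ by an inner automorphism of $P$, which after applying $B$ becomes an inner homotopy; for the second, the argument given directly after (\ref{e:CentrFunc}) shows that two choices of $b_\gamma$ differ by an element of $C_H(\alpha(P))$, whose $B$-image acts trivially up to homotopy on the mapping space. Once this is verified, one concludes that $(ad_\alpha)_p^\wedge$ is a natural transformation of h-$N$-spaces and, being a homotopy equivalence by \ref{t:DZN}, is an h-$N$-equivalence.
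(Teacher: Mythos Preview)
Your proof is correct and follows the same approach as the paper: reduce to checking commutativity of the square before completion, pass to adjoints, and observe that the two resulting homomorphisms $C_H(\alpha(P))\times P\to H$ are conjugate by $b_\gamma$. The extra paragraph on independence of the choices of $\gamma$ and $b_\gamma$ is harmless but not strictly needed, since the h-$N$-actions on both sides were already shown to be well-defined prior to the proposition; once that is in place, commutativity of the square in $\HSp$ is a property that any single choice of representatives will detect.
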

\begin{proof}
	We have to prove that for every $\eta\in N$ and $\gamma:P\to P$ representing the image of $\eta\in N$ in $\Out(N)$ the diagram
	\[
		\begin{diagram}
			\node{BC_H(\alpha(P))}
				\arrow{e,t}{ad_{\alpha}}
			\node{\map(BP,BH)_{B\alpha}}
		\\
			\node{BC_H(\alpha(P))}
				\arrow{e,t}{ad_{\alpha}}
				\arrow{n,l}{\eta}
			\node{\map(BP,BH)_{B\alpha}}
				\arrow{n,r}{\map(B\gamma,BH)_{B\alpha}}
		\end{diagram}
	\]
	commutes up to homotopy. After passing to adjoint maps, this reduces to checking that maps
	\[
		B\varphi, B\psi: B(C_H(\alpha(P))\times P)\rightarrow BH
	\]
	are homotopic, where 
	\begin{align*}
		\varphi:C_H(\alpha(P))\times P \ni & (g,g') \mapsto b_\gamma gb_\gamma^{-1}\cdot \alpha(g')\in H\\
		\psi     :C_H(\alpha(P))\times P \ni & (g,g') \mapsto g\cdot \alpha(\gamma(g'))\in H,
	\end{align*}
	and $b_{\gamma}\in H$ is an element such that $b_{\gamma}^{-1}\cdot \alpha(g) b_{\gamma}=\alpha(\gamma(g))$ for all $g\in P$. Since $\varphi$ and $\psi$ are conjugate, they induce homotopic maps. Thus $ad_{\alpha}$ preserves h-$N$-action, and by \ref{t:DZN} $(ad_\alpha)_p^\wedge$ it is a homotopy equivalence. Then it is an equivalence of right h-$N$-spaces.
\end{proof}

\subsection*{Centralizers in unitary groups}
At this point we restrict to the case $H=U(d)$. As before, $P$ is a $p$-toral left h-$N$-group and $\alpha:P\to U(d)$ is an h-$N$-invariant homomorphism.

Recall that $\IR(P)$ denotes the set of isomorphism classes of unitary representations of $P$, and for $\varrho\in\IR(P)$ and a representation $\xi$ of $P$ $c_\xi^\varrho$ denotes the number of summands isomorphic to $\varrho$ in a decomposition of $\xi$ into a sum of irreducible representations. Define a group
\begin{equation}\label{e:DefJ}
	J_\alpha=\prod_{\varrho\in\IR(P)}U(c_\alpha^\varrho).
\end{equation}
The formula
\begin{equation}\label{e:ActionOnJ}
	\eta: J_\alpha\ni (f_\varrho)_{\varrho\in \IR(P)}\mapsto (f_{\eta^*\varrho})_{\varrho\in \IR(P)}\in J_\alpha
\end{equation}
for $\eta\in N$, defines a (strict) right $N$-action on $J_\alpha$. In particular, $J_\alpha$ is a right h-$N$-group.

The homomorphism $\alpha$ defines a linear $P$-action on $\mathbb{C}^d$. Let
\begin{equation}\label{e:MonDec}
	\mathbb{C}^d\simeq \bigoplus_{\varrho\in\IR(P)}W_\varrho
\end{equation}
be the decomposition of $\mathbb{C}^d$ onto the sum of isotypical representations of $P$. An element $\phi\in U(d)$ centralizes $\alpha(P)$ if and only if for every $\varrho\in\IR(P)$ the restriction $\varphi|_{W_\varrho}$ is a $P$-equivariant automorphism of $W_\varrho$. Then the homomorphism
\[
	\prod_{\varrho\in\IR(P)} U_P(W_\varrho)\to C_{U(d)}(\alpha(P))
\]
is an isomorphism, where $U_P(W_\varrho)$ is the group of $P$-equivariant unitary automorphisms of $W_\varrho$.
For any $\varrho\in\IR(P)$ let $V_\varrho$ be a $P$-vector space which represents $\varrho$ and fix $P$-isomorphisms $i_\varrho:V_\varrho\otimes \mathbb{C}^{c_\alpha^\varrho}\to W_\varrho$. The homomorphism
\[
	k_\alpha^\varrho:U(c_\alpha^\varrho)\ni f\mapsto i_\varrho\circ(id_{V_\varrho}\otimes f)\circ i_\varrho^{-1}\in U_P(W_\varrho)
\]
is an isomorphism by Schur's Lemma. Finally, define an isomorphism
\begin{equation}
	k_\alpha:J_\alpha=\prod_{\varrho\in \IR(P)}U(c_{\alpha}^\varrho) \xrightarrow{\bigoplus k_\alpha^\varrho} \prod_{\varrho\in \IR(P)}U_P(W_\varrho)\xrightarrow{\cong} C_{U(d)}(\alpha(P)).
\end{equation}

\begin{prp}\label{p:JIso}
	The homomorphism $k_\alpha:J_\alpha\to C_{U(d)}(\alpha(P))$	is an equivalence of right h-$N$-groups.
\end{prp}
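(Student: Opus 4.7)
By construction, $k_\alpha$ is already a group isomorphism via Schur's lemma, so it remains to verify that for each $\eta \in N$ the two compositions $k_\alpha\circ(\eta\text{-action on }J_\alpha)$ and $(\eta\text{-action on }C_{U(d)}(\alpha(P)))\circ k_\alpha$ agree up to inner automorphism of $C_{U(d)}(\alpha(P))$. My plan is as follows. Fix a representative $\gamma \in \Aut(P)$ of the image of $\eta$ in $\Out(P)$ and an element $b_\gamma \in U(d)$ satisfying $b_\gamma^{-1}\alpha(g)b_\gamma = \alpha(\gamma(g))$, so that the $N$-action on $C_{U(d)}(\alpha(P))$ is realized by conjugation by $b_\gamma$.

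The first step is to track the effect of conjugation by $b_\gamma$ on the isotypical decomposition $\mathbb{C}^d = \bigoplus_\varrho W_\varrho$. A direct computation from the defining relation for $b_\gamma$ shows that $b_\gamma W_\varrho = W_{\eta^*\varrho}$ (in the convention of \ref{e:ActionOnJ}), so conjugation by $b_\gamma$ permutes the factors $U_P(W_\varrho)$ of $C_{U(d)}(\alpha(P))$ by exactly the same rule by which the right $N$-action on $J_\alpha$ permutes its factors $U(c_\alpha^\varrho)$. The identification $U(c_\alpha^\varrho) = U(c_\alpha^{\eta^*\varrho})$ required to state this is legitimate: h-$N$-invariance of $\alpha$ gives $\alpha \simeq \alpha \circ \gamma$ as morphisms in $\Rep$, whence equality of multiplicities in the corresponding isotypical decompositions.

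The second step is to match the factorwise isomorphisms. For each $\varrho$, both $i_\varrho$ and $b_\gamma^{-1} \circ i_{\eta^*\varrho}$ can be viewed, after identifying $V_{\eta^*\varrho}$ with $V_\varrho$ along $\gamma$, as $P$-equivariant unitary isomorphisms $V_\varrho \otimes \mathbb{C}^{c_\alpha^\varrho} \to W_\varrho$. By Schur's lemma they therefore differ by $\id_{V_\varrho} \otimes u_\varrho$ for some unitary $u_\varrho \in U(c_\alpha^\varrho)$. The resulting tuple $(u_\varrho)_{\varrho} \in J_\alpha$ measures exactly the factorwise discrepancy between the two compositions above, and conjugation in $C_{U(d)}(\alpha(P))$ by $k_\alpha((u_\varrho))$ makes them agree. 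Since morphisms in $\Rep$ are taken modulo inner automorphisms, $k_\alpha$ is h-$N$-equivariant; combined with its being an iso in $\Rep$, this yields the claimed h-$N$-equivalence.

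The main obstacle is purely notational: fixing consistent conventions for the right action on $\IR(P)$, for the identification $V_\varrho \cong V_{\eta^*\varrho}$ along $\gamma$, and for the Schur corrections, so that the resulting tuple $(u_\varrho)$ is unambiguously defined. No deeper structural difficulty arises, since every ambiguity—whether in the choice of $\gamma$, of $b_\gamma$, or of the intertwiners $i_\varrho$—is absorbed by an inner automorphism of $C_{U(d)}(\alpha(P))$, and therefore disappears on passage to $\Rep$.
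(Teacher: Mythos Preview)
Your proof is correct and follows essentially the same route as the paper: fix $\gamma$ and $b_\gamma$, observe that $b_\gamma$ permutes the isotypical summands $W_\varrho$ according to the $N$-action on $\IR(P)$, and then verify factorwise that the two compositions $U(c_\alpha^{\eta^*\varrho})\to U_P(W_\varrho)$ agree up to an inner automorphism. The only difference is in how that last factorwise step is justified: you construct the conjugating element $u_\varrho$ explicitly by applying Schur's lemma to the pair of $P$-intertwiners $i_\varrho$ and $b_\gamma^{\pm 1}\circ i_{\eta^*\varrho}$, whereas the paper argues more structurally that any two isomorphisms between unitary groups which agree on the subgroup of homotheties must differ by an inner automorphism (ruling out the complex-conjugation outer automorphism). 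Both arguments are short and valid; yours is slightly more constructive, the paper's slightly slicker.
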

\begin{proof}
	Let $\gamma:P\to P$ be an isomorphism which represents the action of $\eta\in N$ on $P$ and let $b_\gamma\in U(d)$ be an element such that $b_\gamma^{-1}\alpha(g)b_\gamma=\alpha(\gamma(g))$ for $g\in P$ (cf.\ \ref{e:CentrFunc}). By Schur's lemma $b_\gamma$ maps $W_{\eta^*\varrho}$ isomorphically onto $W_{\varrho}$ for every $\varrho\in\IR(P)$. Then there is a presentation
\[b_\gamma=\bigoplus_{\varrho\in\IR(P)}b_\gamma^{\varrho}\]
where $b_\gamma^\varrho:W_{\eta^*\varrho}\rightarrow W_{\varrho}$ are isomorphisms. The diagram 
\[
	\begin{diagram}
		\node{U(c_\alpha^\varrho)}
			\arrow{e,t}{k_\alpha^\varrho}
		\node{U_P(W_{\varrho})}
	\\
		\node{U(c_\alpha^{\eta^*\varrho})}
			\arrow{e,t}{k_\alpha^{\eta^*\varrho}}
			\arrow{n,l}{id}
		\node{U_P(W_{\eta^*\varrho})}	
			\arrow{n,r}{h\mapsto b_\gamma^\varrho h (b_\gamma^\varrho)^{-1}}
	\end{diagram}
\]
commutes up to conjugacy since both compositions are isomorphisms between unitary groups which preserve subgroups of homotheties. After taking the product over all $\varrho\in\IR(P)$ we obtain the diagram
\[
	\begin{diagram}	
		\node{J_\alpha}
			\arrow{e,t,=}{=}
		\node{\prod_{\varrho\in\IR(P)} U(c_{\alpha}^\varrho)}
			\arrow{e,t}{k_\alpha}
		\node{\prod_{\varrho\in\IR(P)}U_P(W_\varrho)}
			\arrow{e,t}{\cong}
		\node{C_{U(d)}(\alpha(P))}		
	\\
		\node{J_\alpha}
			\arrow{e,t}{=}
		\node{\prod_{\varrho\in\IR(P)} U(c_{\alpha}^\varrho)}
			\arrow{n,l}{(f_\varrho)\mapsto (f_{\eta^*\varrho})}
			\arrow{e,t}{k_\alpha}
		\node{\prod_{\varrho\in\IR(P)}U_P(W_\varrho)}
			\arrow{e,t}{\cong}
			\arrow{n,l}{\prod (h\mapsto b_\gamma^\varrho h (b_\gamma^\varrho)^{-1})}
		\node{C_{U(d)}(\alpha(P))}		
			\arrow{n,l}{h\mapsto b_\gamma h b_\gamma^{-1}}
	\end{diagram}
\]	
which also commutes up to conjugacy. The conclusion follows.
\end{proof}

Immediately from \ref{p:DZNFunc} and \ref{p:JIso} we obtain the following
\begin{cor}\label{c:JMapIso}
	The composition
	\[
		(BJ_\alpha)_p^\wedge\xrightarrow{B(k_\alpha)_p^\wedge} BC_{U(d)}(\alpha(P))_p^\wedge\xrightarrow{(ad_\alpha)_p^\wedge} (\map(BP,BU(d))_{B\alpha})_p^\wedge
	\]
	is an equivalence of right h-$N$-spaces.
\end{cor}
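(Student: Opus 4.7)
The plan is to chain the two equivalences already established in Propositions \ref{p:DZNFunc} and \ref{p:JIso}, since the corollary is essentially an assembly of what those two propositions have proved. First I would unpack what \ref{p:JIso} gives. It asserts that $k_\alpha : J_\alpha \to C_{U(d)}(\alpha(P))$ is a morphism in $N^{op}$-$\Rep$ which is moreover an isomorphism of compact Lie groups. Applying the classifying space functor $B:N^{op}\text{-}\Rep\to N^{op}\text{-}\HSp$ discussed before Definition \ref{d:hNActions} produces a morphism $Bk_\alpha$ in $N^{op}$-$\HSp$; because $k_\alpha$ is a genuine isomorphism of Lie groups, $Bk_\alpha$ is in fact a homeomorphism, hence in particular a homotopy equivalence. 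Applying $p$-completion, which is functorial on (right) h-$N$-spaces and preserves homotopy equivalences, yields $B(k_\alpha)_p^\wedge$ as an equivalence of right h-$N$-spaces.

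Second, Proposition \ref{p:DZNFunc}, specialised to $H=U(d)$, supplies the other equivalence of right h-$N$-spaces:
\[
(ad_\alpha)_p^\wedge : BC_{U(d)}(\alpha(P))_p^\wedge \longrightarrow \bigl(\map(BP,BU(d))_{B\alpha}\bigr)_p^\wedge .
\]
The final step is to compose these two equivalences. In $N^{op}$-$\HSp$, the composite of two h-$N$-equivariant maps is again h-$N$-equivariant (composition of natural transformations of functors $N^{op}\to\HSp$), and the composite of two homotopy equivalences is a homotopy equivalence; therefore the composition $(ad_\alpha)_p^\wedge \circ B(k_\alpha)_p^\wedge$ is an equivalence of right h-$N$-spaces, which is the claim.

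There is essentially no obstacle here, since both of the underlying propositions have done the substantive work; the only item requiring a moment of care is the bookkeeping that $B$ and $(-)_p^\wedge$ transport h-$N$-structures consistently, so that equivalences in $N^{op}\text{-}\Rep$ or in $N^{op}\text{-}\HSp$ are sent to equivalences in $N^{op}\text{-}\HSp$. This is immediate from the definitions in Section \ref{s:HomotopyActions}, where an h-$N$-action is simply a functor out of $N^{op}$ and h-$N$-equivariance means naturality, so any functor applied pointwise preserves these structures automatically.
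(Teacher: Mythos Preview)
Your proposal is correct and matches the paper's approach exactly: the paper simply states that the corollary follows immediately from Propositions \ref{p:DZNFunc} and \ref{p:JIso}, which is precisely the composition of h-$N$-equivalences you describe. Your added remarks about $B$ and $(-)_p^\wedge$ preserving h-$N$-structure are the implicit bookkeeping the paper leaves to the reader.
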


\subsection*{Centralizers in products of unitary groups}
Let $\alpha:P\to U(d)$, $\beta:P\to U(d')$ be h-$N$-invariant homomorphisms. The inclusions
\[
	U(c^\varrho_\alpha)\times U(c^\varrho_\beta)\xrightarrow{\oplus} U(c^\varrho_\alpha+c^\varrho_\beta)=U(c^\varrho_{\alpha\oplus \beta})
\]
induce an $N$-equivariant inclusion $J_\alpha\times J_{\beta}\subseteq J_{\alpha\oplus\beta}$.

\begin{prp}\label{p:JProd}
	The diagram 
	\[
		\begin{diagram}	
			\node{J_\alpha\times J_\beta}
				\arrow{e,t}{k_\alpha\times k_\beta}
				\arrow{s,l}{\subseteq}
			\node{C_{U(d)}(\alpha(P))\times C_{U(d')}(\beta(P))}
				\arrow{s,r}{\oplus}
		\\
			\node{J_{\alpha\oplus\beta}}
				\arrow{e,t}{k_{\alpha\oplus\beta}}
			\node{C_{U(d+d')}((\alpha\oplus\beta)(P))}
		\end{diagram}
	\]
	is a commutative diagram in $N^{op}$-$\Rep$.
\end{prp}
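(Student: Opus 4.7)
The plan is to separate the claim into two pieces: strict commutativity of the square as a diagram of conjugacy classes of homomorphisms in $\Rep$, and h-$N$-equivariance of each of the four arrows. Since a morphism in $N^{op}$-$\Rep$ is a morphism in $\Rep$ compatible with the h-$N$-structure, these two verifications together yield the proposition.

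For commutativity in $\Rep$, I would exploit the freedom in choosing the trivializations $i_\varrho$ used to define $k_\alpha$, $k_\beta$, and $k_{\alpha\oplus\beta}$. The isotypical decomposition (\ref{e:MonDec}) for $\alpha\oplus\beta$ splits canonically as $W_\varrho^{(\alpha\oplus\beta)} = W_\varrho^{(\alpha)}\oplus W_\varrho^{(\beta)}$, so I would take $i_\varrho^{(\alpha\oplus\beta)}$ to be $i_\varrho^{(\alpha)}\oplus i_\varrho^{(\beta)}$ under the identification $\mathbb{C}^{c_\alpha^\varrho+c_\beta^\varrho}=\mathbb{C}^{c_\alpha^\varrho}\oplus\mathbb{C}^{c_\beta^\varrho}$. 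With this choice, $k_{\alpha\oplus\beta}^\varrho$ restricted to the block-diagonal subgroup $U(c_\alpha^\varrho)\times U(c_\beta^\varrho)$ agrees on the nose with $k_\alpha^\varrho\oplus k_\beta^\varrho$. Taking products over $\varrho\in\IR(P)$ gives strict equality of the two compositions for this choice; any other choice of $i_\varrho$ differs by an element of $U_P(W_\varrho)$, so the two compositions always differ by an inner automorphism, hence coincide as morphisms in $\Rep$.

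For equivariance, the horizontal arrows are already covered by Proposition \ref{p:JIso}. The left vertical inclusion $J_\alpha\times J_\beta\hookrightarrow J_{\alpha\oplus\beta}$ is strictly $N$-equivariant: the action (\ref{e:ActionOnJ}) on each of the three factors just permutes indices by $\varrho\mapsto\eta^*\varrho$, and the block-diagonal inclusion $U(c_\alpha^\varrho)\times U(c_\beta^\varrho)\subseteq U(c_\alpha^\varrho+c_\beta^\varrho)$ commutes with this permutation. For the right vertical $\oplus$, given $\gamma:P\to P$ representing $\eta\in N$ and elements $b_\gamma\in U(d)$, $b'_\gamma\in U(d')$ implementing the conjugacy relation (\ref{e:CentrFunc}) for $\alpha$ and $\beta$ respectively, the block-diagonal $b_\gamma\oplus b'_\gamma\in U(d+d')$ implements the analogous relation for $\alpha\oplus\beta$. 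Conjugation by $b_\gamma\oplus b'_\gamma$ on an element $x\oplus y$ in the image of the direct sum equals $(b_\gamma x b_\gamma^{-1})\oplus (b'_\gamma y (b'_\gamma)^{-1})$, which is precisely the image under $\oplus$ of the product of the individual h-$N$-actions; hence $\oplus$ is h-$N$-equivariant.

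The main obstacle is not conceptual but bookkeeping: one must verify that the various choices (isotypical decompositions, trivializations $i_\varrho$, implementing elements $b_\gamma$) can be made simultaneously compatible with the direct sum. The essential observation is that $k_{\alpha\oplus\beta}$ is constructed from exactly the same Schur-theoretic data as $k_\alpha\oplus k_\beta$, so with coherent choices the strict commutativity becomes tautological, and the entire proposition reduces to the elementary fact that the $N$-actions on isotypical components are compatible with internal direct sums of $P$-representations.
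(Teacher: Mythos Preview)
Your proposal is correct and follows essentially the same route as the paper: choose the trivializations $i_\varrho^{(\alpha\oplus\beta)}=i_\varrho^{(\alpha)}\oplus i_\varrho^{(\beta)}$ so that $k_{\alpha\oplus\beta}^\varrho|_{U(c_\alpha^\varrho)\times U(c_\beta^\varrho)}=k_\alpha^\varrho\times k_\beta^\varrho$, and invoke Proposition~\ref{p:JIso} for h-$N$-equivariance of the horizontal maps. Your explicit verification that the vertical arrows are h-$N$-equivariant (via the permutation action on the left and the block-diagonal $b_\gamma\oplus b'_\gamma$ on the right) just fills in what the paper leaves implicit when it asserts that ``all homomorphisms in the diagram preserve right h-$N$-action.''
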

\begin{proof}
	All homomorphisms in the diagram preserve right h-$N$-action (for horizontal ones this follows from \ref{p:JIso}). Let $\mathbb{C}^{d'}=\bigoplus_{\varrho} W'_\varrho$ be the decomposition of the representation space of $\beta$ into monotypical summands (cf.\ \ref{e:MonDec}). Let $k_\alpha^\varrho:U(c_\alpha^\varrho)\to U_P(W_\varrho)$, $k_\beta^\varrho:U(c_\beta^\varrho)\to U_P(W'_\varrho)$ be isomorphisms defining $k_\alpha$ and $k_\beta$. One can choose, for every $\varrho\in\IR(P)$, an isomorphism $k_{\alpha\oplus\beta}^\varrho:U(c_\alpha^\varrho+c_\beta^\varrho)\to U_P(W_\varrho\oplus W'_\varrho)$ such that $k_{\alpha\oplus\beta}^\varrho|_{U(c_\alpha^\varrho)\times U(c_\beta^\varrho)}=k_\alpha^\varrho\times k_\beta^\varrho$. For such choices commutativity of the diagram follows from definitions.
\end{proof}

For an h-$N$-invariant homomorphism $\xi:P\to U(k)$ denote 
\begin{align}\label{e:DefJJ}
	\mathfrak{J}_\xi&:= (BJ_\xi)_p^\wedge\\
	\mathfrak{C}_\xi&:=BC_{U(k)}(\xi(P))_p^\wedge\\
	\mathfrak{M}_\xi&:=(\map(BP,BU(k))_{B\xi})_p^\wedge.
\end{align}

\begin{prp}\label{p:JMDiag}
	The diagram
	\[
	\begin{diagram}
		\node{\mathfrak{J}_\alpha{\times}\mathfrak{J}_\beta}
			\arrow[3]{e,t}{B(k_\alpha\times k_\beta)_p^\wedge}
			\arrow{s,r}{B(\subseteq)_p^\wedge}
	\node{}		\node{}
		\node{\mathfrak{C}_\alpha\times\mathfrak{C}_\beta}
			\arrow[3]{e,t}{(ad_\alpha\times ad_\beta)_p^\wedge}
			\arrow{s,r}{\oplus}
		\node{}\node{}
		\node{\mathfrak{M}_\alpha{\times}\mathfrak{M}_\beta}
			\arrow{s,r}{\oplus}
	\\
		\node{\mathfrak{J}_{\alpha\oplus\beta}}
			\arrow[3]{e,t}{(Bk_{\alpha\oplus\beta})_p^\wedge}
			\node{}\node{}
		\node{\mathfrak{C}_{\alpha\oplus\beta}}
			\arrow[3]{e,t}{(ad_{\alpha\oplus\beta})_p^\wedge}
		\node{}\node{}
		\node{\mathfrak{M}_{\alpha\oplus\beta}}
	\end{diagram}
	\]
	is a commutative diagram in $N^{op}$-$\HSp$, and the horizontal maps are h-$N$-equivalences.
\end{prp}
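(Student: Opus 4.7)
The proposal is to split the diagram into the left and right squares, handle commutativity of each separately, and then derive the h-$N$-equivalence claim from the two preceding results.

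For the left square, I would simply apply the functor $B(-)_p^\wedge$ to the commutative diagram of Proposition \ref{p:JProd}. Since that diagram lives in $N^{op}$-$\Rep$ and the classifying space functor $B$ together with $p$-completion induce a well-defined functor $N^{op}$-$\Rep \to N^{op}$-$\HSp$, the resulting square commutes in $N^{op}$-$\HSp$. No further calculation is needed here.

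For the right square, the key point is naturality of the adjoint construction with respect to the sum homomorphism $\oplus : U(d) \times U(d') \to U(d+d')$. Concretely, I would observe that the strictly commutative square
\[
\begin{diagram}
  \node{C_{U(d)}(\alpha(P)) \times C_{U(d')}(\beta(P)) \times P}
    \arrow{e,t}{\mathrm{mult}_\alpha \times \mathrm{mult}_\beta}
    \arrow{s,l}{\oplus \times \mathrm{id}}
  \node{U(d) \times U(d')}
    \arrow{s,r}{\oplus}
  \\
  \node{C_{U(d+d')}((\alpha\oplus\beta)(P)) \times P}
    \arrow{e,t}{\mathrm{mult}_{\alpha\oplus\beta}}
  \node{U(d+d')}
\end{diagram}
\]
(where $\mathrm{mult}_\xi(g,h) = g\cdot \xi(h)$) becomes, after applying $B$, $p$-completion, and passing to adjoints, the desired right square. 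Commutativity in $N^{op}$-$\HSp$ then follows because each of the maps $ad_\alpha, ad_\beta, ad_{\alpha\oplus\beta}$ has already been shown in Proposition \ref{p:DZNFunc} to preserve the relevant h-$N$-action.

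For the h-$N$-equivalence part, the left horizontal maps are obtained by applying $B(-)_p^\wedge$ to the equivalences of right h-$N$-groups $k_\alpha \times k_\beta$ and $k_{\alpha\oplus\beta}$ from Proposition \ref{p:JIso}, and hence are h-$N$-equivalences. The right horizontal maps are h-$N$-equivalences directly by Proposition \ref{p:DZNFunc} applied to $\alpha$, $\beta$ (componentwise) and to $\alpha \oplus \beta$; equivalently, one can invoke Corollary \ref{c:JMapIso} to see that their composition with the left horizontal maps is already a known h-$N$-equivalence.

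The main obstacle I anticipate is the bookkeeping for the right square: the ad-construction involves elements $b_\gamma \in U(d)$ chosen to conjugate $\alpha$ into itself along $\gamma$, and one must check that the $b_\gamma$ for $\alpha \oplus \beta$ can be taken as $b_\gamma^\alpha \oplus b_\gamma^\beta$ in order for $\oplus$ to commute with ad on the nose rather than merely up to conjugacy. Since we only need commutativity in $\HSp$ this is harmless, but phrasing it cleanly requires chasing the construction \eqref{e:CentrFunc}--\eqref{e:CInducedMap} through the sum map; everything else reduces to functoriality and the results already cited.
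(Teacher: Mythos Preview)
Your proposal is correct and follows essentially the same route as the paper: the paper's proof simply cites \ref{p:DZNFunc}, \ref{p:JIso}, \ref{p:JProd}, and ``the naturality of the maps $ad$ with respect to the target space,'' which is exactly your decomposition into the left square (via \ref{p:JProd}), the right square (via naturality of $ad$ under $\oplus$), and the equivalence claims (via \ref{p:JIso} and \ref{p:DZNFunc}). Your discussion of the $b_\gamma$ bookkeeping is a useful elaboration of a point the paper leaves implicit, and your resolution is correct.
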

\begin{proof}
	It is a consequence of \ref{p:DZNFunc}, \ref{p:JIso}, \ref{p:JProd}, and the naturality of the maps $ad$ with respect to the target space.
\end{proof}

\subsection*{$N$-equivariant fibrations}

\begin{df}
	We say that $N$-equivariant fibrations $p:E\to B$ and $p':E'\to B'$ are h-$N$-equivalent if there exists a strictly commutative diagram
	\[
		\begin{diagram}
			\node{E}
				\arrow{e,t}{g}
				\arrow{s,l}{p}
			\node{E'}
				\arrow{s,r}{p'}
		\\
			\node{B}
				\arrow{e,t}{f}
			\node{B'}
		\end{diagram}
	\]
	such that $f$ and $g$ are h-$N$-equivalences.
\end{df}

Define a space
\begin{equation}
	\mathfrak{J}_\alpha\widetilde{\times}\mathfrak{J}_\beta:=(EJ_{\alpha\oplus\beta}/(J_\alpha\times J_{\beta}))_p^\wedge
\end{equation}
and an $N$-equivariant fibration
\begin{equation}
	j_{\alpha,\beta}:\mathfrak{J}_\alpha\widetilde{\times}\mathfrak{J}_\beta=(EJ_{\alpha\oplus\beta}/(J_\alpha\times J_{\beta}))_p^\wedge\to (EJ_{\alpha\oplus\beta}/J_{\alpha\oplus \beta})_p^\wedge=\mathfrak{J}_{\alpha\oplus\beta}.
\end{equation}
Let $\mathfrak{G}_{\alpha,\beta}$ be a fiber of $j_{\alpha,\beta}$. Immediately from definition follows that there are h-$N$-equivalences
\begin{equation}\label{e:Grass}
		\mathfrak{G}_{\alpha,\beta}\simeq (J_{\alpha\oplus\beta}/(J_\alpha\times J_\beta))_p^\wedge \cong
		\prod_{\varrho\in\IR(P)}\left( U(c_\alpha^\varrho+c_\beta^\varrho)/U(c_\alpha^\varrho)\times U(c_\beta^\varrho) \right)_p^\wedge,
\end{equation}
where $N$ acts on the product of $p$-completed unitary Grassmannians by permuting factors, as in \ref{e:ActionOnJ}. Note that this definition is consistent with (\ref{e:DefFrakG}).

Assume that $P$ is a $p$-stubborn subgroup of $G$ and that $N=N_G(P)/P=\Aut_{\mathcal{R}_p(G)}(G/P)$. Let
\begin{equation}
	\mathfrak{M}^G_\alpha\widetilde{\times}\mathfrak{M}^G_\beta:=\map(EG\times_G G/P,(EU(d+d')/(U(d)\times U(d')))_p^\wedge)_{B(\alpha\oplus\beta)_p^\wedge},
\end{equation}
and for an h-$N$-invariant homomorphism $\xi:P\to U(k)$ denote
\begin{equation}
	\mathfrak{M}_\xi^G:=\map(EG\times_G G/P, BU(k)_p^\wedge)_{B\xi}.
\end{equation}
The inclusion $U(d)\times U(d')\subseteq U(d+d')$ induces an $N$-equivariant fibration
\begin{equation}\label{e:FibrM}
	m^G_{\alpha,\beta}:\mathfrak{M}^G_\alpha\widetilde{\times}\mathfrak{M}^G_\beta\to \mathfrak{M}^G_{\alpha\oplus\beta},
\end{equation}
where the right action of $N$ comes from the left action on $EG\times_G G/P$.

\begin{rem}
	In the situation of (\ref{e:LiftingGStrict}), for any $p$-stubborn subgroup $P\subseteq G$ the space $\Fib_{\mu,\nu}^p(G/P)$ is the fiber of fibration $m^G_{\mu_P,\nu_P}$.
\end{rem}

\begin{prp}\label{p:EquivFibr}
	The fibrations $j_{\alpha,\beta}$ and $m_{\alpha,\beta}^G$ are h-$N$-equivalent.
\end{prp}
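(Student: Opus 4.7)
The plan is to bridge between $j_{\alpha,\beta}$ and $m^G_{\alpha,\beta}$ through an intermediate fibration of mapping spaces whose source is $BP$. The two key inputs are Proposition \ref{p:JMDiag}, which relates the $J$-side and the mapping-space side of the picture, and the standard fact that $EG \times_G G/P$ is $N$-equivariantly equivalent to $BP$. The overall picture will be a chain of h-$N$-equivalences of $N$-equivariant fibrations
\[
	j_{\alpha,\beta}\;\sim\; r_{\alpha,\beta} \;\sim\; m^G_{\alpha,\beta},
\]
where $r_{\alpha,\beta}$ is defined below; transitivity of the relation then yields the claim.

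First I would introduce the intermediate fibration
\[
	r_{\alpha,\beta}\colon \map(BP,(EU(d{+}d')/(U(d)\times U(d')))_p^\wedge)_{B(\alpha\oplus\beta)_p^\wedge} \to \map(BP,BU(d{+}d')_p^\wedge)_{B(\alpha\oplus\beta)},
\]
and compare it with $m^G_{\alpha,\beta}$. The projection $\pi\colon EG\times_G G/P \to EG/P\simeq BP$ given by $[e,gP]\mapsto [eg]$ is a strictly $N$-equivariant homotopy equivalence, where the right $N$-action on $G/P$ is $gP\cdot n=gnP$ and the induced action on $BP$ is the outer action on $P$ used throughout the paper. Precomposition with $\pi$ then provides a strictly commutative h-$N$-equivariant square between $r_{\alpha,\beta}$ and $m^G_{\alpha,\beta}$ whose horizontal maps are h-$N$-equivalences, establishing the second link of the chain.

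For the first link, I would invoke Proposition \ref{p:JMDiag}. The outer rectangle of that diagram is a strictly commutative square in $N^{op}$-$\HSp$ whose vertical maps are $B(J_\alpha\times J_\beta\hookrightarrow J_{\alpha\oplus\beta})_p^\wedge$ on the left and $\oplus$ on the right, and whose horizontal compositions are h-$N$-equivalences. Converting each vertical map into a fibration produces, on the left column, the fibration $j_{\alpha,\beta}$, since $EJ_{\alpha\oplus\beta}/(J_\alpha\times J_\beta)\to BJ_{\alpha\oplus\beta}$ already realizes $B(J_\alpha\times J_\beta)\to BJ_{\alpha\oplus\beta}$ as a genuine fibration; on the right column it produces $r_{\alpha,\beta}$, since the mapping space functor out of $BP$ carries fibrations to fibrations. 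The h-$N$-equivalences on the original square then lift to an h-$N$-equivalence between $j_{\alpha,\beta}$ and $r_{\alpha,\beta}$.

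The main technical obstacle is the strictification step in the second paragraph above: Proposition \ref{p:JMDiag} only asserts commutativity in $N^{op}$-$\HSp$, whereas the paper's definition of an h-$N$-equivalence of fibrations requires a genuinely commutative diagram in $\Sp$. I would strictify by exploiting the homotopy lifting property of the right-hand fibration to replace the top horizontal map with a homotopic one for which the square commutes on the nose, and then verify that the h-$N$-equivariance is preserved by this replacement. Once this strictification is in hand, composing the two h-$N$-equivalences of fibrations completes the proof.
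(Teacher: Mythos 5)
Your argument is correct and is essentially the paper's own proof: both rest on Proposition \ref{p:JMDiag}, the h-$N$-equivalences coming from $EG\times_G G/P\simeq BP$ and $EJ_{\alpha\oplus\beta}\simeq EJ_\alpha\times EJ_\beta$, and a final strictification of the homotopy-commutative square using the homotopy lifting property of the target fibration. Your insertion of the intermediate fibration $r_{\alpha,\beta}$ merely splits into two steps what the paper does in one zig-zag diagram before strictifying against $m^G_{\alpha,\beta}$, so the content is the same.
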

\begin{proof}	
	There is a commutative diagram in $N^{op}\text{-}\HSp$ 
	\[
	\begin{diagram}
		\node{\mathfrak{J}_\alpha\widetilde{\times}\mathfrak{J}_\beta}
			\arrow{se,b}{j_{\alpha,\beta}}
		\node{\mathfrak{J}_\alpha{\times}\mathfrak{J}_\beta}
			\arrow{w,t}{\simeq}
			\arrow{e,t}{\simeq}
			\arrow{s,r}{B(\subseteq)_p^\wedge}
		\node{\mathfrak{M}_\alpha{\times}\mathfrak{M}_\beta}
			\arrow{s,r}{\oplus}
		\node{\mathfrak{M}^G_\alpha\widetilde{\times}\mathfrak{M}^G_\beta}
			\arrow{s,r}{m^G_{\alpha,\beta}}
			\arrow{w,t}{\simeq}
	\\
		\node{}
		\node{\mathfrak{J}_{\alpha\oplus\beta}}
			\arrow{e,t}{\simeq}
		\node{{\mathfrak{M}_{\alpha\oplus\beta}}}
		\node{{\mathfrak{M}^G_{\alpha\oplus\beta}}}
			\arrow{w,t}{\simeq}
	\end{diagram}
	\]
	where the middle square is the diagram \ref{p:JMDiag}, and the remaining equivalences are induced by h-$N$-equivalences $EG\times_G G/P\simeq BP$ and $EJ_{\alpha\oplus\beta}\simeq EJ_{\alpha}\times EJ_\beta$. Since $m^G_{\alpha,\beta}$ is a fibration, there exists a strictly commutative diagram
	\[	
		\begin{diagram}
			\node{\mathfrak{J}_\alpha\widetilde{\times}\mathfrak{J}_\beta}
				\arrow{e}
				\arrow{s,l}{j_{\alpha,\beta}}
			\node{\mathfrak{M}^G_\alpha\widetilde{\times}\mathfrak{M}^G_\beta}
				\arrow{s,r}{m_{\alpha,\beta}}
		\\
			\node{\mathfrak{J}_{\alpha\oplus\beta}}
				\arrow{e}
			\node{\mathfrak{M}^G_{\alpha\oplus\beta}}
		\end{diagram}
	\]
	such that the horizontal maps are h-$N$-equivalences.
\end{proof}

\section{Fibers of equivariant fibrations}

\label{s:Fibers}

If $p:E\to B$ is a fibration, then every path $\omega:[0,1]\to B$ connecting points $b_0,b_1\in B$ induces a weak homotopy equivalence $\omega_*:p^{-1}(b_0)\to p^{-1}(b_1)$ between fibers over its endpoints.  If $B$ is simply connected, then this map does not depend (up to homotopy) on the choice of a path; in such case we will denote $t_{b_0,b_1}:=\omega_*$. For a commutative diagram
\begin{equation} \label{e:NEqDiag}
		\begin{diagram}
			\node{E}
				\arrow{e,t}{g}
				\arrow{s,l}{p}
			\node{E'}
				\arrow{s,r}{p'}
		\\
			\node{B}
				\arrow{e,t}{f}
			\node{B'}
		\end{diagram}
\end{equation}
where $p, p'$ are fibrations and $B'$ is simply connected, we define an induced map between fibers  $F=p^{-1}(b_0)$ and $F'=(p')^{-1}(b'_0)$ induced by this diagram as a composition
\[ 
	T_f^g:F=p^{-1}(b_0)\xrightarrow{g|_{F}} (p')^{-1}(f(b_0))\xrightarrow{t_{f(b_0),b_0'}} (p')^{-1}(b'_0)=F'.
\]
Clearly it is well-defined up to homotopy.

Let $N$ be a group, $E$ and $B$ $N$-spaces and $p:E\to B$ an $N$-equivariant fibration over simply connected basis. For every fiber $F=p^{-1}(b_0)$, $b_0\in B$ the formula
\[
	N\ni \eta \mapsto T_{\eta:B\to B}^{\eta:E\to E} \in [F,F]
\]
defines an h-$N$-action on $F$. If the spaces in the diagram (\ref{e:NEqDiag}) are $N$-spaces and the maps are $N$-equivariant, then the induced map between fibers $T_f^g:F\to F'$ is h-$N$-equivariant. We will prove that this generalizes, under certain assumptions, to transformations of fibrations which are only h-$N$-equivariant. 

\begin{lem}\label{l:FibAct}
	Consider a diagram of spaces
\[
		\begin{diagram}
			\node{E}
				\arrow{e,tb,dd}{g_1}{g_2}
				\arrow{s,l}{p}
			\node{E'}
				\arrow{s,r}{p'}
		\\
			\node{B}
				\arrow{e,t}{f}
			\node{B'}
		\end{diagram}
\]	
	Assume that 
\begin{itemize}
\item{$fp=p'g_1=p'g_2=:q$,}
\item{the maps $g_1$ and $g_2$ are homotopic,}
\item{$p:E\to B$ and $p':E'\to B'$ are simple fibrations with fibers $F=p^{-1}(b_0)$ and $F'=(p')^{-1}(f(b_0))$ respectively, where $b_0\in B$,}
\item{the homomorphism 
	\[
		\pi_1(\map(E,E')_{g_1})\xrightarrow{\map(E,p')_*} \pi_1(\map(E,B')_q) 
	\]
	is surjective.}
\end{itemize}	
	Then the maps $g_i|_F:F\to F'$ are homotopic for $i=1,2$.
\end{lem}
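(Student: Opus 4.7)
The plan is to pull the question about $g_1|_F$ and $g_2|_F$ up to the mapping-space level, and use the surjectivity hypothesis to upgrade the given free homotopy $g_1\simeq g_2$ to a fiberwise one. Since $p'$ is a fibration, so is the induced map $P := \map(E,p')\colon \map(E,E')\to \map(E,B')$. The equalities $p'g_1 = p'g_2 = q$ say that $g_1$ and $g_2$ both lie in the fiber of $P$ over $q$, namely
\[
\map_{B'}(E,E')_q := \{h\colon E\to E' \mid p'h = q\}.
\]
The hypothesis $g_1\simeq g_2$ places them in a common path component of the \emph{total} space $\map(E,E')$; what we want is that they already lie in a common path component of the \emph{fiber}, for then a path between them in the fiber is precisely a fiber-preserving homotopy $H\colon E\times I\to E'$ from $g_1$ to $g_2$ (meaning $p'\circ H_t = q$ for every $t$).

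To achieve this, I would apply the long exact sequence of $P$ based at $g_1$:
\[
\pi_1(\map(E,E')_{g_1}) \xrightarrow{P_*} \pi_1(\map(E,B')_q) \xrightarrow{\partial} \pi_0(\map_{B'}(E,E')_q) \longrightarrow \pi_0(\map(E,E')).
\]
The fourth hypothesis of the lemma is exactly that $P_*$ is surjective, so $\partial = 0$ by exactness, and therefore the last arrow is injective on pointed sets based at $[g_1]$. Since $[g_2]$ maps to $[g_1]$ in $\pi_0(\map(E,E'))$, we conclude that $[g_1]=[g_2]$ in $\pi_0(\map_{B'}(E,E')_q)$, which is precisely the sought-after fiberwise homotopy $H$.

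Finally, restricting $H$ to $F\times I$: for $x\in F$ and $t\in I$ one has $p'H(x,t) = q(x) = fp(x) = f(b_0)$, so $H(F\times I)\subseteq (p')^{-1}(f(b_0)) = F'$, giving a homotopy $g_1|_F\simeq g_2|_F$ through maps $F\to F'$. There is really no serious obstacle: the argument is a formal diagram chase. The only background facts needed are that $\map(E,p')$ is a fibration (standard, from $p'$ being one) and the usual long exact sequence of homotopy groups of a fibration, for which the simplicity assumption on $p$ and $p'$ is more than sufficient. The simplicity of $F$ and $F'$ assures that the resulting homotopy class $[g_i|_F]\in [F,F']$ is canonical, independent of basepoint choices.
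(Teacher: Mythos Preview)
Your argument is correct and is essentially the same as the paper's: the paper restricts first to the component $\map(E,E')_{g_1}$, notes that the induced map to $\map(E,B')_q$ is a fibration whose fiber $M=\{g\in\map(E,E')_{g_1}:p'g=q\}$ is path-connected by the surjectivity hypothesis (via the same long exact sequence you invoke), and then restricts the resulting path from $g_1$ to $g_2$ in $M$ to $F$. The only cosmetic difference is that the paper passes to the connected component before applying the exact sequence, whereas you work with the whole mapping space and phrase the conclusion as injectivity of $\pi_0(\text{fiber})\to\pi_0(\text{total})$ at the basepoint; these are equivalent.
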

\begin{proof}
	The map $\map(E,E')_{g_1}\to \map(E,B')_q$ is a fibration with the fiber
	\[
		M=\{g\in\map(E,E')_{g_1}:\; p'g=q\}.
	\]
	By the last assumption $M$ is path-connected. Furthermore, the image of the restriction $M\to \map(F,E')$ is contained in $\map(F,F')$. Hence, $g_1,g_2\in M$ restrict to homotopic maps $g_1|_F,g_2|_F\in \map(F,F')$.
\end{proof}

\begin{prp}\label{p:hNActCrit}
	Consider a commutative diagram of simply connected $N$-spaces
	\[
		\begin{diagram}
			\node{E}
				\arrow{e,t}{g}
				\arrow{s,l}{p}
			\node{E'}
				\arrow{s,r}{p'}
		\\
			\node{B}
				\arrow{e,t}{f}
			\node{B'}
		\end{diagram}
	\]
	Assume that the maps $p$ and $p'$ are $N$-equivariant fibrations and that the maps $f$ and $g$ are h-$N$-equivariant. Let $F=p^{-1}(b_0)$ and $F'=(p')^{-1}(b_0')$ be fibers of $p$ and $p'$ respectively; assume that they are also simply connected. If the homomorphism
	\[
		p'_*:\pi_1\map(E, E')_g\to \pi_1\map(E,B')_{p'g}
	\]
	is surjective, then the induced map $T_f^g:F\to F'$ between fibers is h-$N$-equivariant.
\end{prp}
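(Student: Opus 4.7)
My plan is to reduce the claim, for each fixed $\eta \in N$, to an application of Lemma \ref{l:FibAct} after unwinding both compositions $T_f^g \circ \eta_F$ and $\eta_{F'} \circ T_f^g$ via the naturality of fiber transport. Recall that $\eta_F = t^p_{\eta(b_0), b_0} \circ \eta_E|_F$ and $T_f^g = t^{p'}_{f(b_0), b_0'} \circ g|_F$; analogously for $\eta_{F'}$. The naturality of $g$ with respect to fiber transport (applied to the path $\eta_B(b_0) \rightsquigarrow b_0$ in $B$, pushed to $B'$ through $f$) gives
\[
T_f^g \circ \eta_F \simeq t^{p'}_{f \eta_B(b_0),\, b_0'} \circ (g \circ \eta_E)|_F,
\]
while the naturality of $\eta_{E'}$ with respect to fiber transport in $p'$ gives
\[
\eta_{F'} \circ T_f^g \simeq t^{p'}_{\eta_{B'} f(b_0),\, b_0'} \circ (\eta_{E'} \circ g)|_F.
\]

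The proof thus reduces to comparing $g \circ \eta_E$ and $\eta_{E'} \circ g$ as maps $E \to E'$. By h-$N$-equivariance of $g$ they lie in a common path component of $\map(E,E')$, but they cover the two maps $f \eta_B p$ and $\eta_{B'} f p$, which are only homotopic (via the homotopy coming from h-$N$-equivariance of $f$ post-composed with $p$). To put them over a common base map so that Lemma \ref{l:FibAct} can be invoked, I would lift this homotopy along the fibration $p'$ starting from $g \circ \eta_E$, using the homotopy lifting property. This yields a map $g_1' \colon E \to E'$ with $p' \circ g_1' = \eta_{B'} f p = p' \circ (\eta_{E'} \circ g)$ strictly and with $g_1' \simeq g \circ \eta_E \simeq \eta_{E'} \circ g$. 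Moreover, since the covering homotopy $K \colon E \times I \to B'$ factors through $p$, its restriction to $F \times I$ traces a single path in $B'$ from $f \eta_B(b_0)$ to $\eta_{B'} f(b_0)$; hence the lifted homotopy restricted to $F$ realises the fiber transport, giving
\[
g_1'|_F \simeq t^{p'}_{f \eta_B(b_0),\, \eta_{B'} f(b_0)} \circ (g \circ \eta_E)|_F.
\]

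Lemma \ref{l:FibAct} then applies to the pair $g_1', \eta_{E'} \circ g$, producing $g_1'|_F \simeq (\eta_{E'} \circ g)|_F$ as maps $F \to (p')^{-1}(\eta_{B'} f(b_0))$. Composing with $t^{p'}_{\eta_{B'} f(b_0), b_0'}$ and combining with the two identities from the first paragraph yields $T_f^g \circ \eta_F \simeq \eta_{F'} \circ T_f^g$, as required. The main technical step I expect to require care is the verification of the $\pi_1$-surjectivity hypothesis of Lemma \ref{l:FibAct} at the basepoint $g_1'$: the proposition only assumes it at $g$, but precomposition with the homeomorphism $\eta_E \colon E \to E$ induces commuting homeomorphisms $\map(E, E')_g \cong \map(E, E')_{g \eta_E}$ and $\map(E, B')_{p' g} \cong \map(E, B')_{p' g \eta_E}$ that intertwine $p'_*$, and $g_1'$ lies in the same path component as $g \circ \eta_E$, so surjectivity transfers. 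The other delicate point is the bookkeeping of fibers and paths; here simple connectivity of $B'$ (making each $t^{p'}_{x,y}$ canonical) and of $F, F'$ (making transports basepoint-independent) are what allow all these identifications to proceed unambiguously.
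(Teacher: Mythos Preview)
Your proof is correct and follows essentially the same route as the paper's: you reduce to comparing $g\circ\eta_E$ and $\eta_{E'}\circ g$, use the homotopy lifting property along $p'$ to adjust one of them so that both sit over the same base map, and then invoke Lemma~\ref{l:FibAct}, verifying its $\pi_1$-surjectivity hypothesis by precomposing with the homeomorphism $\eta_E$. The only cosmetic differences are that the paper lifts starting from $\eta_{E'}\circ g$ rather than from $g\circ\eta_E$, and that the paper absorbs your first paragraph into the single claim that it suffices to show $T_{f\eta_B}^{g\eta_E}\simeq T_{\eta_{B'}f}^{\eta_{E'}g}$ (your identities $T_f^g\circ\eta_F\simeq t^{p'}_{f\eta_B(b_0),b_0'}\circ(g\eta_E)|_F$ and $\eta_{F'}\circ T_f^g\simeq t^{p'}_{\eta_{B'}f(b_0),b_0'}\circ(\eta_{E'}g)|_F$ are precisely the functoriality $T_{f_2f_1}^{g_2g_1}\simeq T_{f_2}^{g_2}\circ T_{f_1}^{g_1}$). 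One small slip: the homotopy from h-$N$-equivariance of $f$ is \emph{pre}composed with $p$, not post-composed, to get $K\colon E\times I\to B'$; your subsequent sentence makes clear you meant this.
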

\begin{proof}	
	Fix $\eta\in N$. Denote $f_1=B\xrightarrow{\eta}B\xrightarrow{f}B'$, $g_1=E\xrightarrow{\eta}E\xrightarrow{g}E'$, \hbox{$f_2=B\xrightarrow{f}B'\xrightarrow{\eta}B'$}, $g_2=E\xrightarrow{g}E'\xrightarrow{\eta}E'$. We have to prove that the commutative diagrams 
	\[
		\begin{diagram}
			\node{E}
				\arrow{e,t}{g_1}
				\arrow{s,l}{p}
			\node{E'}
				\arrow{s,r}{p'}
		\\
			\node{B}
				\arrow{e,t}{f_1}
			\node{B'}
		\end{diagram}
		\;\;\;\text{ and }\;\;\;
		\begin{diagram}
			\node{E}
				\arrow{e,t}{g_2}
				\arrow{s,l}{p}
			\node{E'}
				\arrow{s,r}{p'}
		\\
			\node{B}
				\arrow{e,t}{f_2}
			\node{B'}
		\end{diagram}
	\]
	induce homotopic maps between fibers, i.e.\ that maps $T_{f_1}^{g_1}$ and $T_{f_2}^{g_2}$ are homotopic. Let $F:B\times I\to B'$ be a homotopy between $f_2=F|_{B\times 0}$ and $f_1=F|_{B\times 1}$, and let $L: E\times I\to E'$ be a lifting extension fitting into the diagram
	\[
		\begin{diagram}
			\node{E\times 0}
				\arrow{e,t}{g_2}
				\arrow{s,J}
			\node{E'}
				\arrow{s,r}{p'}
		\\
			\node{E\times I}
				\arrow{e,t}{F\circ p'}
				\arrow{ne,t,..}{L}
			\node{B'}
		\end{diagram}	
	\]
	Denote $g_2'=L|_{E\times 1}:E\to E'$. Maps $T_{f_2}^{g_2}$ and $T_{f_1}^{g_2'}$ are homotopic since $(F,L)$ is a homotopy between transformations $(f_2,g_2)$ and $(f_1,g'_2)$. Finally, the diagram  
	\[
		\begin{diagram}
			\node{E}
				\arrow{e,tb,dd}{g_1}{g'_2}
				\arrow{s,l}{p}
			\node{E'}
				\arrow{s,r}{p'}
		\\
			\node{B}
				\arrow{e,t}{f_1}
			\node{B'}
		\end{diagram}
	\]	
	satisfies the assumptions of Lemma \ref{l:FibAct}, since the homomorphism
	\begin{multline*}
		\pi_1(\map(E,E')_{g_1})= \pi_1(\map(E,E')_{g\eta}) \underset{\simeq}{\xrightarrow{\map(\eta^{-1},E')_*}} \pi_1(\map(E,E')_g) \xrightarrow{\map(E,p')_*}\\
		\pi_1(\map(E,B')_{p'g})\underset{\simeq}{\xrightarrow{\map(\eta,B')_*}}\pi_1(\map(E,B')_{ p'g \eta})=\pi_1(\map(E,B')_{p'g_1})
	\end{multline*}
	is an epimorphism. Thus maps $g_1|_{F},g_2'|_{F}:F\to (p')^{-1}(f_1(b_0))$ and are homotopic. Finally we obtain a sequence of homotopic maps $F\to F'$
\[
	T_{f_1}^{g_1}=t_{f_1(b_0),b_0'}\circ g_1|_F\sim t_{f_1(b_0),b_0'}\circ g'_2|_F=T_{f_1}^{g_2'}\sim T_{f_2}^{g_2}
\]	
	which ends the proof.
\end{proof}

\subsection*{Equivalence of homotopy centralizers}

\begin{prp}\label{p:CentCrit}
	Let $G$ and $H\subseteq H'$ be compact Lie groups and let $\alpha:G\to H$ be a homomorphism. Assume that for every $p$-stubborn subgroup $P\subseteq G$ the centralizers $C_H(\alpha(P))$ and $C_{H'}(\alpha(P))$ are equal. Then the map
	\[
		\map(BG_p^\wedge,BH_p^\wedge)_{B\alpha}\to \map(BG_p^\wedge,(BH')_p^\wedge)_{B\alpha}
	\]
	is a homotopy equivalence.
\end{prp}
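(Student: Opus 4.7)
The plan is to reduce the statement, via the subgroup homotopy decomposition \ref{t:JMODecomp}, to a pointwise comparison of mapping spaces to which the Dwyer-Zabrodsky-Notbohm theorem \ref{t:DZN} applies. Since $BH_p^\wedge$ and $(BH')_p^\wedge$ are $p$-complete and $\varepsilon^p_G$ is a mod-$p$ equivalence, both mapping spaces appearing in the statement are naturally equivalent to homotopy limits over $\mathcal{R}_p(G)$ of mapping spaces out of $EG\times_G G/P\simeq BP$; the map induced by $H\subseteq H'$ then becomes a natural transformation between these homotopy-limit diagrams, which it suffices to show is a levelwise equivalence on the relevant components.

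Carrying this out, for every $p$-stubborn $P\subseteq G$ I would combine \ref{t:DZN} with the standing hypothesis to obtain a commutative square
\begin{equation*}
\begin{diagram}
	\node{BC_H(\alpha(P))_p^\wedge}
		\arrow{e,t}{(ad_{\alpha|_P})_p^\wedge}
		\arrow{s,l}{B(\subseteq)_p^\wedge}
	\node{\map(BP,BH_p^\wedge)_{B\alpha|_P}}
		\arrow{s}
\\
	\node{BC_{H'}(\alpha(P))_p^\wedge}
		\arrow{e,t}{(ad_{\alpha|_P})_p^\wedge}
	\node{\map(BP,(BH')_p^\wedge)_{B\alpha|_P}}
\end{diagram}
\end{equation*}
whose horizontal arrows are homotopy equivalences. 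The hypothesis $C_H(\alpha(P))=C_{H'}(\alpha(P))$ identifies the left vertical arrow with the identity, so the right vertical arrow is a homotopy equivalence for every $p$-stubborn $P$. Since a homotopy limit of levelwise equivalences is an equivalence, passing to $\holim_{\mathcal{R}_p(G)}$ then yields the claim.

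The main obstacle I anticipate is bookkeeping of connected components: one needs to verify that under the identification coming from $\varepsilon^p_G$ the distinguished component $\map(BG_p^\wedge,BH_p^\wedge)_{B\alpha}$ corresponds exactly to the homotopy limit of the distinguished components $\map(BP,BH_p^\wedge)_{B\alpha|_P}$ (and analogously for $H'$). This should follow because the whole homotopy compatible family $\{B\alpha|_P\}$ is the restriction of the single map $B\alpha$ along the mod-$p$ equivalence $\varepsilon^p_G$, so that the two components match up naturally.
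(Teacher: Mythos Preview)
Your proposal is correct and follows essentially the same route as the paper: reduce via the decomposition \ref{t:JMODecomp} to a homotopy limit over $\mathcal{R}_p(G)$, and then use \ref{t:DZN} together with the hypothesis $C_H(\alpha(P))=C_{H'}(\alpha(P))$ to see that the induced map is a levelwise equivalence. The only refinement in the paper's argument concerns exactly the component bookkeeping you flag: rather than asserting that the single component $\map(BG_p^\wedge,BH_p^\wedge)_{B\alpha}$ matches the holim of the single components $\map(BP,BH_p^\wedge)_{B\alpha|_P}$, the paper first proves the equivalence on the larger space
\[
\map(BG,BH_p^\wedge)_{(\alpha)}:=\{f:\; f|_{BP}\sim B(\alpha|_P)\ \text{for all }G/P\in\mathcal{R}_p(G)\},
\]
which is what corresponds naturally to the holim, and only afterwards restricts to the component of $B\alpha$.
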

\begin{proof}
	Let $i:H\to H'$ be the inclusion. Denote $\beta_P:=\alpha|_P$ and
	 \[
		\map(BG_p^\wedge,BH_p^\wedge)_{(\alpha)}:=\{f:BG_p^\wedge\to BH_p^\wedge:\;\forall_{G/P\in\mathcal{R}_p(G)}\;  f|_{BP_p^\wedge}\sim (B\alpha|_P)_p^\wedge\}.
	\]
	There is a commutative diagram
	\[
		\begin{diagram}
			\node{\map(BG,BH_p^\wedge)_{(\alpha)}}
				\arrow[2]{e,t}{i_*}
				\arrow{s,lr}{\simeq}{(\ref{t:JMODecomp})}
			\node{}
			\node{\map(BG,(BH')_p^\wedge)_{(i\circ \alpha)}}
				\arrow{s,lr}{\simeq}{(\ref{t:JMODecomp})}
		\\
			\node{\map(\operatorname*{hocolim}_{G/P\in \mathcal{R}_p(G)} EG/P,BH_p^\wedge)_{(\alpha)}}
				\arrow[2]{e,t}{i_*}
				\arrow{s,l}{\simeq}
			\node{}
			\node{\map(\operatorname*{hocolim}_{G/P\in \mathcal{R}_p(G)} EG/P,(BH')_p^\wedge)_{(i\circ \alpha)}}
				\arrow{s,l}{\simeq}
		\\
			\node{\operatorname*{holim}_{G/P\in \mathcal{R}_p(G)} \map(EG/P,BH_p^\wedge)_{\alpha|_P}}
				\arrow[2]{e,t}{i_*}
			\node{}
			\node{\operatorname*{holim}_{G/P\in \mathcal{R}_p(G)} \map(EG/P,(BH')_p^\wedge)_{i\circ \alpha|_P}}
		\end{diagram}
	\]
	with horizontal arrows induced by $i:H\to H'$ and vertical ones being homotopy equivalences. The bottom horizontal map is an equivalence by assumptions, then so is the upper one. By restricting to the component of $B\alpha_p^\wedge$ at left-hand side, and of $(Bi\circ\alpha)_p^\wedge$ at right-hand side we obtain the conclusion.
\end{proof}

\begin{prp}\label{p:CentrPQUU}
	Let $k,l$ be positive integers. If $P\subseteq U(k)$, $Q\subseteq U(l)$ are $p$-stubborn subgroups, then
	\[C_{U(k+l)}(P\times Q)=C_{U(k)\times U(l)}(P\times Q)=C_{U(k)}(P)\times C_{U(l)}(Q).\]
\end{prp}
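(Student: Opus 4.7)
The second equality $C_{U(k)\times U(l)}(P\times Q)=C_{U(k)}(P)\times C_{U(l)}(Q)$ is immediate from the direct product structure of $U(k)\times U(l)$, and the inclusion $C_{U(k)\times U(l)}(P\times Q)\subseteq C_{U(k+l)}(P\times Q)$ follows from $U(k)\times U(l)\subseteq U(k+l)$. The content of the proposition is therefore the reverse inclusion $C_{U(k+l)}(P\times Q)\subseteq C_{U(k)\times U(l)}(P\times Q)$.

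My plan is to reduce this to centralizing the scalar matrices. I claim that every $p$-stubborn subgroup $P\subseteq U(k)$ contains the central circle $Z(U(k))=S^1\cdot I_k$. Granting this claim, $S^1\cdot I_k\times\{1\}\subseteq P\times Q$ gives
\[
C_{U(k+l)}(P\times Q)\subseteq C_{U(k+l)}(S^1\cdot I_k\times\{1\}).
\]
Writing $\phi\in U(k+l)$ in block form with a $k\times k$ and an $l\times l$ diagonal block, and imposing commutation with $\diag(\lambda I_k,I_l)$ for all $\lambda\in S^1$, forces the off-diagonal blocks to vanish; hence the right-hand side equals $U(k)\times U(l)$. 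This yields $C_{U(k+l)}(P\times Q)\subseteq U(k)\times U(l)$, which combined with the second equality gives the desired reverse inclusion.

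The hard part is thus the scalar-containment claim. Since $Z(U(k))$ centralizes $P$ it normalizes $P$, so $Z(U(k))\cdot P/P\cong S^1/(S^1\cap P)$ embeds as a subgroup of $N_{U(k)}(P)/P$, which is finite by $p$-stubbornness of $P$. But the only closed subgroup of $S^1$ with finite quotient is $S^1$ itself: a proper closed subgroup of $S^1$ is finite cyclic $C_n$, whose quotient $S^1/C_n\cong S^1$ is infinite. Hence $S^1\cap P=S^1$, i.e., $S^1\cdot I_k\subseteq P$. The main obstacle is really just this last quotient argument; once the central circle sits inside $P$, the rest is formal. An entirely parallel argument applies to $Q\subseteq U(l)$, though the observation that $S^1\cdot I_k\subseteq P$ already suffices to force $\phi$ into $U(k)\times U(l)$.
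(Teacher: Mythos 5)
Your proof is correct, but it takes a genuinely different route from the paper's. You reduce everything to the claim that a $p$-stubborn subgroup $P\subseteq U(k)$ contains the central circle $S^1\cdot I_k$, which you get from the finiteness of $N_{U(k)}(P)/P$ together with the fact that $S^1$ has no proper closed subgroup of finite index; then commuting with the partial scalars $\diag(\lambda I_k,I_l)$ kills the off-diagonal blocks, so the centralizer lands in $U(k)\times U(l)$ and the rest is formal. The paper instead argues representation-theoretically: it uses $p$-stubbornness (again only via finiteness of the normalizer quotient, in the form of a dimension count) to show that the inclusion $\iota_P$ contains no trivial summand, observes that every irreducible of $P\times Q$ is an external tensor product $\varrho\bar\otimes\sigma$, concludes that $\iota_P\circ\pi_P$ and $\iota_Q\circ\pi_Q$ share no irreducible constituents, and then invokes Schur's lemma to see that the centralizer of $(\iota_P\circ\pi_P)\oplus(\iota_Q\circ\pi_Q)$ in $U(k+l)$ is block-diagonal. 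Both arguments exploit the same weak consequence of $p$-stubbornness and both are asymmetric in $P$ and $Q$ (you only need the circle inside $P$; the paper only needs one of the two inclusions to lack a trivial summand). Your version is more elementary, being a direct matrix computation; the paper's version is phrased in the language of isotypical decompositions and multiplicities $c_\alpha^\varrho$, which matches the framework used in the surrounding sections (e.g.\ the construction of $J_\alpha$ and $k_\alpha$), so it integrates more smoothly there, but as a standalone proof of this proposition either argument is complete.
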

\begin{proof}
	Let $\iota_P:P\subseteq U(k)$, $\iota_Q:Q\subseteq U(l)$ denote the inclusions and let $\pi_P:P\times Q\to P$, $\pi_Q:P\times Q\to Q$ be projections. The representation $\iota_P$ do not contain a trivial factor --- otherwise the normalizer of $P$ in $U(k)$ would have a greater dimension than $P$ which contradicts $p$-stubbornness. Every irreducible representation of $P\times Q$ has the form $\varrho\bar\otimes\sigma:=(\varrho\circ\pi_P)\otimes (\sigma\circ \pi_Q)$, where $\varrho$ and $\sigma$ are irreducible representations of $P$ and $Q$ respectively. If $\iota_P\circ \pi_P$ contains a subrepresentation isomorphic to $\varrho\bar\otimes\sigma$, then $\sigma$ is  trivial and $\varrho$ is non-trivial; if it is contained in $\iota_Q\circ \pi_Q$ then $\varrho$ is non-trivial. As a consequence, there exists no irreducible representation of $P\times Q$ which is contained as a summand in both $\iota_P\circ \pi_P$ and $\iota_Q\circ \pi_Q$. By Schur's lemma we obtain
	\begin{multline*}
		C_{U(k+l)}(P\times Q)\cong C_{U(k+l)}((\iota_P\circ \pi_P)\oplus (\iota_Q\circ \pi_Q))
		\cong\\
		 C_{U(k)}(\iota_P)\times C_{U(l)}(\iota_Q) \cong C_{U(k)}(P)\times C_{U(l)}(Q). \qedhere
	\end{multline*}
\end{proof}

\begin{prp}\label{p:CentrOfJJ}
	Let $N$ be a finite group and $P$ a $p$-toral h-$N$-group. Assume that  $\alpha:P\to U(d)$ and $\beta:P\to U(d')$ are h-$N$-invariant homomorphisms. Then the map
	\[
		\map(\mathfrak{J}_{\alpha}\times \mathfrak{J}_{\beta},\mathfrak{J}_{\alpha}\times \mathfrak{J}_{\beta})_{id}
			\to
		\map(\mathfrak{J}_{\alpha}\times \mathfrak{J}_{\beta},\mathfrak{J}_{\alpha\oplus\beta})_{B(\subseteq)_p^\wedge}
	\]
	induced by the inclusion $J_\alpha\times J_\beta\subseteq J_{\alpha\oplus\beta}$ is a homotopy equivalence.
\end{prp}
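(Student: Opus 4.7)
The plan is to deduce this directly from Proposition \ref{p:CentCrit}, applied with the ambient group $G := J_\alpha \times J_\beta$ mapping to itself via the identity, while the larger target is $H' := J_{\alpha\oplus\beta}$ with the canonical inclusion $G \subseteq H'$. The only real content is checking the centralizer condition: for every $p$-stubborn subgroup $Q \subseteq J_\alpha \times J_\beta$, one has $C_{J_\alpha \times J_\beta}(Q) = C_{J_{\alpha\oplus\beta}}(Q)$.

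To verify this, I would first decompose $Q$. Since $J_\alpha \times J_\beta = \prod_{\varrho \in \IR(P)} \bigl( U(c_\alpha^\varrho) \times U(c_\beta^\varrho) \bigr)$, iterating \cite[Theorem 1.6(ii)]{JMO} gives that every $p$-stubborn subgroup has the form
\[
Q = \prod_{\varrho \in \IR(P)} (P_\varrho \times R_\varrho),
\]
with each $P_\varrho \subseteq U(c_\alpha^\varrho)$ and $R_\varrho \subseteq U(c_\beta^\varrho)$ itself $p$-stubborn. Then a factor-wise application of Proposition \ref{p:CentrPQUU} yields
\[
C_{U(c_\alpha^\varrho + c_\beta^\varrho)}(P_\varrho \times R_\varrho) = C_{U(c_\alpha^\varrho)}(P_\varrho) \times C_{U(c_\beta^\varrho)}(R_\varrho) = C_{U(c_\alpha^\varrho) \times U(c_\beta^\varrho)}(P_\varrho \times R_\varrho),
\]
and taking the product over $\varrho \in \IR(P)$ gives the desired equality $C_{J_{\alpha\oplus\beta}}(Q) = C_{J_\alpha \times J_\beta}(Q)$.

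With this in hand, Proposition \ref{p:CentCrit} delivers a homotopy equivalence
\[
\map\bigl( (BG)_p^\wedge, (BG)_p^\wedge \bigr)_{\id} \xrightarrow{\simeq} \map\bigl( (BG)_p^\wedge, (BH')_p^\wedge \bigr)_{B(\subseteq)_p^\wedge}.
\]
To finish I would identify $(B(J_\alpha \times J_\beta))_p^\wedge \simeq \mathfrak{J}_\alpha \times \mathfrak{J}_\beta$; since $J_\alpha$ and $J_\beta$ are products of unitary groups, both $BJ_\alpha$ and $BJ_\beta$ are simply connected of finite $\F_p$-type, so Bousfield--Kan $p$-completion commutes with their product. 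This substitution yields exactly the equivalence asserted.

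Since every step reduces to a result already established in the excerpt (or to the JMO product decomposition for $p$-stubborn subgroups), no real obstacle is anticipated; the only place one must be careful is the commutation of $p$-completion with the product $BJ_\alpha \times BJ_\beta$, which is guaranteed by the simple connectivity and finite type of each factor.
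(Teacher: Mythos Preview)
Your proposal is correct and follows essentially the same route as the paper: decompose a $p$-stubborn $Q\subseteq J_\alpha\times J_\beta$ via \cite[Th.~1.6]{JMO}, apply Proposition~\ref{p:CentrPQUU} factor by factor to match the centralizers, and conclude with Proposition~\ref{p:CentCrit}. Your extra care in identifying $(B(J_\alpha\times J_\beta))_p^\wedge$ with $\mathfrak{J}_\alpha\times\mathfrak{J}_\beta$ is a point the paper leaves implicit.
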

\begin{proof}
	Let $Q$ be a $p$-stubborn subgroup of $J_{\alpha}\times J_\beta$. By \cite[Th. 1.6]{JMO}, every $p$-stubborn subgroup of $J_{\alpha}\times J_\beta$ has the form
	\[
		\prod_{\varrho\in\IR(P)} Q_\varrho \times \prod_{\varrho\in\IR(P)} Q'_\varrho\subseteq \prod_{\varrho\in\IR(P)} U(c_\alpha^\varrho)\times \prod_{\varrho\in\IR(P)} U(c_\beta^\varrho),
	\]
	where $Q_\varrho\subseteq U(c_\alpha^\varrho)$, $Q'_\varrho\subseteq U(c_\beta^\varrho)$ are $p$-stubborn subgroups. Then
	\begin{multline*}
		C_{J_{\alpha}(P)\times J_\beta(P)}(Q)=\prod_{\varrho\in\IR(P)}C_{U(c_\alpha^\varrho)}(Q_\varrho)\times \prod_{\varrho\in\IR(P)}C_{U(c_\beta^\varrho)}(Q'_\varrho)\cong\\
		\prod_{\varrho\in\IR(P)} C_{U(c_\alpha^\varrho)\times U(c_\beta^\varrho)}(Q_\varrho\times Q'_\varrho)
		\buildrel{(\ref{p:CentrPQUU})}\over\cong 
		\prod_{\varrho\in\IR(P)} C_{U(c_\alpha^\varrho+c_\beta^\varrho)}(Q_\varrho\times Q'_\varrho)=\\
		C_{J_{\alpha+\beta}(P)}(Q).
	\end{multline*}
	The conclusion follows by \ref{p:CentCrit}.
\end{proof}

\subsection*{Equivariant equivalence of fibers}
Let $G$ be a compact connected Lie group, $P\subseteq G$ a $p$-toral subgroup, and $N=N_G(P)/P$. Fix h-$N$-invariant homomorphisms $\alpha:P\to U(d)$ and $\beta:P\to U(d')$. Let $\mathfrak{F}_{\alpha,\beta}$ be the fiber of the $N$-equivariant fibration $m^G_{\alpha,\beta}:\mathfrak{M}^G_\alpha\widetilde{\times}\mathfrak{M}^G_\beta\to \mathfrak{M}^G_{\alpha\oplus\beta}$ (cf.\ \ref{e:FibrM}). Since both $\mathfrak{M}^G_{\alpha\oplus\beta}$ and $\mathfrak{F}_{\alpha,\beta}$ are simply connected, $\mathfrak{F}_{\alpha,\beta}$ carries an induced right h-$N$-action.

\begin{prp}\label{p:EquivFG}
	The right h-$N$-spaces $\mathfrak{F}_{\alpha,\beta}$ and $\mathfrak{G}_{\alpha,\beta}$ (cf.\ \ref{e:Grass}) are equivalent.
\end{prp}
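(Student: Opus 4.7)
The plan is to combine Proposition \ref{p:EquivFibr} with the general criterion \ref{p:hNActCrit} for h-$N$-equivariance of induced maps between fibers. From \ref{p:EquivFibr} I would first extract a strictly commutative square
\[
\begin{diagram}
\node{\mathfrak{J}_\alpha\widetilde{\times}\mathfrak{J}_\beta}
    \arrow{e,t}{\Phi}
    \arrow{s,l}{j_{\alpha,\beta}}
\node{\mathfrak{M}^G_\alpha\widetilde{\times}\mathfrak{M}^G_\beta}
    \arrow{s,r}{m^G_{\alpha,\beta}}
\\
\node{\mathfrak{J}_{\alpha\oplus\beta}}
    \arrow{e,t}{\Psi}
\node{\mathfrak{M}^G_{\alpha\oplus\beta}}
\end{diagram}
\]
in which the vertical maps are $N$-equivariant fibrations and both $\Phi,\Psi$ are h-$N$-equivalences. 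Since $\Psi$ is in particular a homotopy equivalence between simply connected spaces, the induced map between the fibers $T := T_\Psi^\Phi : \mathfrak{G}_{\alpha,\beta} \to \mathfrak{F}_{\alpha,\beta}$ is automatically a homotopy equivalence, so the content of the proposition is that $T$ respects the h-$N$-actions.

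To establish h-$N$-equivariance of $T$, I would apply Proposition \ref{p:hNActCrit} to the square above. The simply-connectedness hypotheses are routine: the spaces $\mathfrak{J}_\alpha\widetilde{\times}\mathfrak{J}_\beta$ and $\mathfrak{J}_{\alpha\oplus\beta}$ are $p$-completions of classifying spaces of connected unitary groups; $\mathfrak{M}^G_{\alpha\oplus\beta}$ is simply connected by the Dwyer--Zabrodsky--Notbohm argument already used in the proof of \ref{t:Crit}; and $\mathfrak{M}^G_\alpha\widetilde{\times}\mathfrak{M}^G_\beta$, together with the fiber $\mathfrak{F}_{\alpha,\beta}$, inherit simple-connectedness from their $\mathfrak{J}$-counterparts through $\Phi$.

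The crucial remaining hypothesis is surjectivity of
\[
(m^G_{\alpha,\beta})_* : \pi_1 \map(\mathfrak{J}_\alpha\widetilde{\times}\mathfrak{J}_\beta, \mathfrak{M}^G_\alpha\widetilde{\times}\mathfrak{M}^G_\beta)_\Phi \to \pi_1 \map(\mathfrak{J}_\alpha\widetilde{\times}\mathfrak{J}_\beta, \mathfrak{M}^G_{\alpha\oplus\beta})_{m^G_{\alpha,\beta}\Phi}.
\]
I would reduce this by transporting along the h-$N$-equivalences of \ref{p:JMDiag} (and using the homotopy equivalence $\mathfrak{J}_\alpha\widetilde{\times}\mathfrak{J}_\beta \simeq \mathfrak{J}_\alpha\times\mathfrak{J}_\beta$) to the corresponding statement for the inclusion-induced map
\[
\map(\mathfrak{J}_\alpha\times\mathfrak{J}_\beta, \mathfrak{J}_\alpha\times\mathfrak{J}_\beta)_{\id} \to \map(\mathfrak{J}_\alpha\times\mathfrak{J}_\beta, \mathfrak{J}_{\alpha\oplus\beta})_{B(\subseteq)_p^\wedge},
\]
which is a homotopy equivalence by Proposition \ref{p:CentrOfJJ}, hence $\pi_1$-surjective with room to spare. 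Then \ref{p:hNActCrit} applies and shows that $T$ is h-$N$-equivariant, giving the desired h-$N$-equivalence.

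The main obstacle is the careful bookkeeping required to identify the correct basepoint components across the chain of equivalences between the $\mathfrak{J}$-world and the $\mathfrak{M}^G$-world; once this identification is set up, the proposition falls out of combining \ref{p:EquivFibr}, \ref{p:JMDiag}, \ref{p:CentrOfJJ}, and \ref{p:hNActCrit}.
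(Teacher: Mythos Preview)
Your proposal is correct and follows essentially the same approach as the paper: use \ref{p:EquivFibr} to obtain the square of h-$N$-equivalent fibrations, then apply \ref{p:hNActCrit} with the $\pi_1$-surjectivity hypothesis verified via \ref{p:CentrOfJJ} after transporting along the equivalences of \ref{p:JMDiag}. The paper's proof is more terse, simply asserting that ``\ref{p:CentrOfJJ} implies that the assumptions of \ref{p:hNActCrit} are satisfied,'' whereas you have correctly unpacked the transport step.
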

\begin{proof}
	By \ref{p:EquivFibr} the fibrations $j_{\alpha,\beta}$ and $m_{\alpha,\beta}^G$ are h-$N$-equivalent. Thus the map $\mathfrak{G}_{\alpha,\beta}\to\mathfrak{F}_{\alpha,\beta}$ induced by an h-$N$-equivalence of fibrations is a homotopy equivalence, and \ref{p:CentrOfJJ} implies that the assumptions of \ref{p:hNActCrit} are satisfied. Then it is also an h-$N$-equivalence.
\end{proof}

\begin{proof}[Proof of {Theorem \ref{t:hNEq}}]
	If $\alpha:G\to U(d)$, $\beta:G\to U(d')$ are homomorphisms representing $\mu_P$ and $\nu_P$ respectively, and $N=N_G(P)/P$, then there is an $N$-homeomorphism
	\[
		\Fib_{\mu,\nu}^p(G/P)\cong \fF_{\alpha,\beta}.
	\]
	The conclusion follows from \ref{p:EquivFG}.
\end{proof}

\end{document}